\newtheorem{thm}{Theorem}[section]
\newtheorem{lem}[thm]{Lemma}
\newtheorem{prop}[thm]{Proposition}
\newtheorem{ex}[thm]{Example}
\newtheorem{defn}[thm]{Definition}
\newtheorem{rem}[thm]{Remark}
\newcommand{\norm}[1]{\|#1\|}
\newcommand{\normL}[1]{\big\|#1\big\|}
\newcommand{\norma}[1]{\left\|#1\right\|}
\newcommand{\abs}[1]{\left\vert#1\right\vert}
\newcommand{\babs}[1]{\big\vert#1\big\vert}
\newcommand{\pa}[1]{\left(#1\right)}
\newcommand{\bpa}[1]{\big(#1\big)}
\newcommand{\dotp}[2]{\langle #1,\,#2 \rangle}
\newcommand{\set}[1]{\left\{#1\right\}}
\newcommand{\grad}{{\mathrm{grad}}}
\newcommand{\To}{\longrightarrow}
\newcommand{\M}{\mathcal{M}}
\newcommand{\R}{\mathbb R}
\newcommand{\N}{\mathbb N}
\newcommand{\rmD}{{\rm D}}
\newcommand{\supp}{\mathrm{supp}}
\newcommand{\Sg}{\supp(\eta)}
\newcommand{\En}{\mathcal{E}_n}
\newcommand{\LIP}[1]{\mathrm{Lip}(#1)}
\newcommand{\Lip}[1]{L_{#1}}
\newcommand{\diam}{\mathrm{diam}}
\newcommand{\cut}{\mathrm{Cut}}
\newcommand{\inj}{\mathrm{inj}}
\newcommand{\Exp}{\mathrm{Exp}}
\newcommand{\ddt}{\displaystyle\frac{\partial}{\partial t}}
\newcommand{\NeigGama}{\mathcal{N}_{\Gamma}^{a_0}}
\newcommand{\proj}{\mathrm{Proj}}
\newcommand{\ball}{B}
\newcommand{\distH}{d_{\mathrm{H}}^\M}
\newcommand{\vol}{\mathrm{vol}}
\newcommand{\qandq}{\quad \text{and} \quad}
\newcommand{\qwithq}{\quad \text{with} \quad}
\renewcommand{\Tilde}{\widetilde}
\def\1{\mathbb I}
\def\a{\alpha}
\def\b{\beta}
\def\d{\delta}
\def\e{\varepsilon}
\def\g{\gamma}
\def\G{\Gamma}
\def\Mt{(\mathcal{M} \setminus \G) \times ]0,T[}
\def\tMt{(\widetilde{\mathcal{M}} \setminus \widetilde{\G}) \times ]0,T[}
\def\tGt{(\widetilde{\G} \times ]0,T[) \cup \widetilde{\mathcal{M}} \times \{0\}}
\newcommand{\eqdef}{\stackrel{\mathrm{def}}{=}}
\date{}
\begin{document}
\title{Limits of non-local approximations to the Eikonal equation on Manifolds}
\address{{\sc Rita Zantout}: INSA Rouen Normandie, Normandie Univ, LMI UR 3226, F-76000 Rouen, France.}
\email{rita.zantout@insa-rouen.fr}

\address{{\sc Nicolas Forcadel}: INSA Rouen Normandie, Normandie Univ, LMI UR 3226, F-76000 Rouen, France.}
\email{nicolas.forcadel@insa-rouen.fr}

\address{{\sc Jalal Fadili}: Normandie Univ, ENSICAEN, CNRS, GREYC, France.}
\email{Jalal.Fadili@greyc.ensicaen.fr}

\author{Jalal Fadili \and Nicolas Forcadel \and Rita Zantout}

\maketitle

\begin{abstract}
In   this paper, we consider a non-local approximation of the time-dependent Eikonal equation defined on a Riemannian manifold. We show that the local and the non-local problems are well-posed in the sense of viscosity solutions and we prove regularity properties of these solutions in time and space. If the kernel is properly scaled, we then derive error bounds between the solution to the non-local problem and the one to the local problem, both in continuous-time and Forward Euler discretization. Finally, we apply these results to a sequence of random weighted graphs with $n$ vertices. In particular, we establish that the solution to the problem on graphs converges almost surely uniformly to the viscosity solution of the local problem as the kernel scale parameter decreases at an appropriate rate when the number of vertices grows and the time step vanishes.
\end{abstract}

\tableofcontents

\section{Introduction}
Nonlinear partial differential equations (PDEs) on graphs have found applications in a variety of areas such as, e.g.,   analysis, physics, economy, probability theory, biology and data science. In particular,  a family of Hamilton-Jacobi equations on graphs called the Eikonal equation has been considered on weighted graphs for data processing in \cite{desquesnes2013eikonal,ta2009adaptation}, for semi-supervised learning on graphs \cite{calder2022hamilton,roith2022continuum}, and for data depth \cite{calder2022hamilton,molina2022tukey}. They have been also used on topological networks in \cite{camilli2013approximation,imbert2013hamilton}. In \cite{shu2018hamilton}, Hamilton-Jacobi equations on graphs were also studied to derive discrete versions of some functional inequalities (log-Sobolev inequality and Talagrand's transport inequality).  

In this paper, we are particularly interested in the case of geometric graphs whose vertices are points living on a compact Riemannian submanifold, which is relevant for many applications such as data depth \cite{molina2022eikonal}, semi-supervised learning \cite{belkin2004semi} and image and mesh processing \cite{memoli2005distance,osher2017low}.

In order to transpose PDEs on graph, discrete calculus have been used in recent years using partial difference equations (PdEs) on graphs. PdEs are methods used to reformulate continuous problems by replacing differential operators by difference operators on graphs~\cite{elmoataz2008nonlocal,elmoataz2008unifying,grady2010discrete}.
 
\medskip

The main goal of this paper is to rigorously study continuum limits, i.e. as the number of data points tends to infinity, of the Eikonal equation defined on a weighted geometric graph embedded in a compact Riemannian submanifold. The motivation behind considering Eikonal equations in such a context is the ability to extend it to any discrete data that can be represented by weighted geometric graphs. In fact, many applications in numerical data analysis and processing or machine learning include data that can be defined on manifolds, or irregularly shaped domains, or network-like structures, or defined as high dimensional point clouds such as collections of features vectors. In a discrete setting, these data can be represented as weighted geometric graphs, where the vertices are drawn from the underlying domain (a manifold) and are connected by edges if sufficiently close in a certain ground metric. The edges are given weights (e.g., based on the distance between data points).

\subsection{Problem statement}

\nomenclature{$\M$}{differentiable manifold}
\nomenclature{$g$}{Riemannian metric}
\nomenclature{$\norm{.}_x=\sqrt{g_x(.,.)}$}{Riemannian inner product norm }
\nomenclature{$C^1$}{continuously differentiable}
\nomenclature{$C^k$}{k times continuously differentiable}
\nomenclature{$T_x\M$}{tangent space at $x \in M$}
\nomenclature{$\R$}{real numbers}
\nomenclature{$\nabla_v$}{directional derivative }
\nomenclature{$T^*\M$}{cotangent bundle}
\nomenclature{$T^*_x\M$}{cotangent space}
\nomenclature{$\ddt$}{partial derivative with respect to time}
\nomenclature{$H$}{Hamiltonian}
\nomenclature{$P$}{Potential function }
\nomenclature{$L(\gamma)$}{length of the curve $\gamma$}
\nomenclature{$L_f$}{lipschitz constant to the function $f$}
\nomenclature{$d_{\M}$}{Riemannian distance}
\nomenclature{$\mathcal{L}_{x y}$}{parallel transport from $T_x\M$ to $T_y\M$}
\nomenclature{$\LIP$}{lipschtiz function}
\nomenclature{$\nabla ^{-}_{\eta_\e}$}{weighted directional internal gradient operator}
\nomenclature{$\eta$}{kernel function}
\nomenclature{$\eta_\e$}{$\e$-scaled kernel function}
\nomenclature{$\R^+$}{positive real numbers}
\nomenclature{$\supp$}{support of a function}
\nomenclature{$Proj$}{projection}
\nomenclature{$|.|$}{the absolute-value norm}
\nomenclature{$G_n$}{weighted graph}
\nomenclature{$V_n$}{the set of $n$ vertices}
\nomenclature{$E_n$}{the set of $n$ edges}
\nomenclature{$w_n$}{the weights}
\nomenclature{$\mu$}{probability measure}
\nomenclature{$\rho$}{density of $\mu$}
\nomenclature{$\diam$}{diameter}
\nomenclature{$B_\M(x,r)$}{geodesic ball of center $x$ and radius $r>0$}
\nomenclature{$H_d$}{discrete Hamiltonian}

In this paper, we will work with a Riemannian manifold $(\M,g)$ of dimension $m^*$, where $\M$ is a compact manifold and $g$ is a Riemannian metric (see Section~\ref{chap4_sec:def-manifold} for precise definitions and properties). Let $G = (V,w)$ be a finite weighted (geometric) graph on $\M$, where $V \subset \M$ is the set of vertices, and $w: V\times V \to \R^+$ is the weight function. A natural Eikonal-type equation on graphs takes the form
\begin{equation}\label{chap4_eq:eikgraph}
\begin{cases}
    \max_{v \in V} \sqrt{w(u,v)}(f(v)-f(u))_{-}=\widetilde{P}(u), & u\in V\setminus V_0,\\
    f(u)=0, & u \in V_0,
    \end{cases}
\end{equation}
where $(\cdot)_- \eqdef -\min(\cdot,0)$, $V_0 \subset V$ and $\widetilde{P}$ is a given potential. This equation is an adaptation on weighted graphs of the Eikonal equation using the framework of PdEs and provides a tool for multiple front propagation problems on weighted graphs. This discrete form allows to handle any data that can be represented on graphs, e.g., in machine learning, data analysis and processing on graphs, on unstructured meshes or point clouds  \cite{calder2022hamilton,camilli2013approximation,desquesnes2013eikonal,memoli2005distance,roith2022continuum,ta2009adaptation}.

Our goal in the paper is to study the behavior of the solution to problem \eqref{chap4_eq:eikgraph} as the number of vertices goes to infinity. In fact, we will consider an even more general class of equations. More precisely, inspired by \eqref{chap4_eq:eikgraph}, we consider the non-local Eikonal equation in a time-dependent form
\begin{equation} \tag{\textrm{$\mathcal{P_{\e}}$}} \label{chap4_eikonal-eq-discrete}
\begin{cases}
\ddt f^\e(x,t) +  \abs{\nabla ^{-}_{\eta_\e}f^\e(x,t)}_\infty = \widetilde{P}(x), & (x,t) \in \tMt, \\
f^\e(x,t) = f^\e_0(x), & (x,t) \in \tGt,
\end{cases}
\end{equation}
where $\widetilde\M$ is a subset of points of $\M$, $\widetilde{\Gamma} \subset \widetilde\M$ is the set of boundary points, $\widetilde{P}$ is the potential function, and $f_0^\e$ is the boundary function. $\nabla ^{-}_{\eta_\e}$ is a non-local operator coined the weighted directional internal gradient operator, introduced in \cite{desquesnes2013eikonal} and studied theoretically in \cite{fadili2023limits} in the Euclidean case. This operator is defined through 
\begin{equation*}
   \abs{ \nabla ^{-}_{\eta_\e} f^\e(x,t) }_{\infty}= \max_{y \in \widetilde\M} J_\e(x,y)(f^\e(y,t)-f^\e(x,t)),
\end{equation*}
where, given a length scale $\e>0$, the $\e$-scaled kernel function $J_{\e}:\M \times \M \To \R_{+}$ is defined by
\begin{equation*}
  J_\e(x,y)= \frac{1}{C_\eta}  \eta_\e(\widetilde{d}(x,y)) \qwithq \eta_\e(t)=\frac{1}{\e}\eta \pa{\frac{t}{\e}},
\end{equation*}
and $\eta:[0, +\infty) \to [0,+\infty)$ is a radial kernel function, and 
\begin{equation} \label{chap4_def:C_eta}
    C_\eta= \sup_{t\in \R_{+}} t\eta(t) > 0.
\end{equation}
Since, from a practical point of view, computing intrinsic Riemannian distance is quite impossible in many cases due to the unknown geometry and curvature, we will work with $\widetilde{d}$ which is an approximation of the intrinsic distance function (see Assumption \ref{chap4_assum:tilde-d} below). Observe that we can also write
\begin{equation} \label{chap4_gradientdiscret}
  \abs{  \nabla ^{-}_{\eta_\e} f^\e(x,t)}_\infty= \max_{y \in \widetilde{M}} C_{\eta}^{-1} \eta_\e (\widetilde{d}(x,y)) (f^\e(x,t)-f^\e(y,t)).
\end{equation}

The problem \eqref{chap4_eikonal-eq-discrete} represents an Eikonal equation on weighted graphs with $n$ vertices when we properly instantiate the sets $\widetilde{\M}$ and $\widetilde{\Gamma}$, see Section~\ref{chap4_sec:eikconvgraphs}.
Therefore, several natural questions arise from a numerical analysis perspective: (i) given $n$-dependent scaling $\e_n$, is there a continuum limit (and in which sense) of the solution to \eqref{chap4_eikonal-eq-discrete} on graphs (and its time-discretized version as well) as $n \to +\infty$ (and time step goes $0$)? (ii) at which rate this convergence happens? (iii) what are the main quantities that come into play in the error bounds? Our main contributions of this work is to settle these questions.

Towards this, we study the time-dependent local Eikonal equation on the Riemannian manifold $\M$
\begin{equation} \tag{\textrm{$\mathcal{P}$}} \label{chap4_eikonal-eq}
\begin{cases}
\ddt f(x,t) + \|\grad f(x,t)\|_x = P(x), & (x,t) \in \M \setminus \G \times ]0,T[, \\
f(x,t) = f_0(x), & (x,t) \in (\G \times ]0,T[) \cup \M \times \set{0},
\end{cases}
\end{equation}
where ${\Gamma} \subset \M$ is the set of boundary points, ${P}$ is the potential function, and $f_0$ is the boundary function. $\grad f(x,t) \in T_x\M$ is the Riemannian gradient in space of $f$ at a point $x$, where $T_x\M$ is the tangent space of $\M$ at $x$ and $\|\cdot\|_x$ is the norm induced by the Riemannian metric $g$ at $x$ (see~Section~\ref{chap4_sec:def-manifold} for definitions).

In the rest of the paper, we will work under the standing assumptions:\\
\fbox{\parbox{0.975\textwidth}{
\begin{enumerate}[label=({\textbf{H.\arabic*}})]
    \item $\M$ is a differentiable manifold of class $C^3$ which is compact, and geodesically strongly convex with $C^1$ boundary $\partial\M$. \label{chap4_assum:M}
    \item   $\widetilde{\M}$ is a finite subset of $\M$. \label{chap4_asssum:tildeM}
    \item $\G \subset \M$ and $\widetilde{\G} \subset \widetilde{\M}$ are closed sets with $\M \setminus\G$ open and $\widetilde{\M} \setminus \widetilde{\G} \subset  \M \setminus \G$.\label{chap4_assum:gamma}
    \item $P \in \LIP{\M \setminus \G}$  and $\widetilde{P} \in \LIP{\widetilde{\M} \setminus \widetilde{\G}}$ are non-negative potential functions. \label{chap4_assum:P}
    \item $f_0 \in \LIP{\M}$ and $f_0^{\e} \in \LIP{\widetilde{\M}}$. \label{chap4_assum:f_0}
    \item There exists $a_0, d_0>0$ such that $\widetilde{d}(\cdot,\Gamma)$ is $C^1$ on the neighbourhood $\NeigGama \setminus \Gamma$ where $\NeigGama \eqdef \set{x \in \M,\; \widetilde{d}(x,\Gamma) < a_0}$,  and $\norm{\grad \; \widetilde{d}(x,\Gamma)}_{x}\ge d_0$ for all $x \in \NeigGama \setminus \Gamma$.\label{chap4_assum:regulariteOmega}
    \item There exists $\delta>0$ such that for all $x \in \M$, $\cut(x) \cap \mathcal{N}_{x}^{\delta} = \emptyset$, where $\mathcal{N}_{x}^{\delta} = \{y \in \M, \, d_{\M}(x,y) \leq \delta\}$ and  $\cut(x)$ is the cut locus of $x$ defined in Section~\ref{chap4_sec:def-manifold}.\label{chap4_assum:dis-reg}
    \item There exists a constant $C_\M$ and $\xi>0$ such that   $\max_{(x,y) \in \M \times \M} |\widetilde{d}(x,y) -d_\M(x,y)| \leq C_\M \e^{1 + \xi}.$ \label{chap4_assum:tilde-d}
\end{enumerate}}}\\
Assumption \ref{chap4_assum:gamma} implies that $\partial\M \subseteq \Gamma$. In assumption \ref{chap4_asssum:tildeM}, the fact that $\widetilde\M$ is finite is quite natural since the main goal of this paper is to study Eikonal equation on graphs. However, this assumption is only required to prove the existence of a solution of the non-local problem \eqref{chap4_eikonal-eq-discrete} (see Proposition~\ref{chap4_prop:existence-J}) and is not used to show any other result. In the rest of the paper, this assumption can be replaced by the fact that $\widetilde\M$ is a compact subset of $\M$. Assumption \ref{chap4_assum:regulariteOmega} is concerned with the regularity of the distance $\widetilde{d}(.,\Gamma)$. It is used to construct super-solutions that are compatible with the boundary conditions. Furthermore, assumption \ref{chap4_assum:dis-reg} guarantees local differentiability of the squared Riemannian distance. It means that for all $y \in \mathcal{N}_{x}^\delta$, for $\delta$ sufficiently small, $y \notin \cut(x)$, and thus $d^2_{\M}(x,\cdot)$ is differentiable at $y$. Assumption \ref{chap4_assum:tilde-d}   explains how $\widetilde{d}$ approximates the Riemannian distance $d_\M$. For example, $\widetilde{d}(x,y)$ could be computed using the Euclidean distance in the $\Bar{\xi}$-offset $\Omega_\M^{\Bar{\xi}}=\{x\in \R^m: d(x,\M) \leq \Bar{\xi}\}$, where $\Bar{\xi}= \e^{2+2\xi}$. In this case, $C_\M$ depends on the dimension $m^*$. We refer to the work of Mémoli and Sapiro in \cite[Theorem 5]{memoli2005distance} in which they studied the approximation of the Riemannian distance and constructed extrinsic approximation satisfying \ref{chap4_assum:tilde-d}.  

In the context of our study, it is important to illustrate concrete examples of manifolds that satisfy the hypothesis listed above, showing how our framework can be applied to various types of manifolds. Consider first the Euclidean sphere $\mathbb{S}^n$, where $n\geq 2$. This manifold is compact and geodesically strictly convex. Being a smooth manifold of class $C^\infty$, it is also certainly of class $C^3$.
Furthermore, our assumptions are met by the compact hyperbolic space $\mathbb{H}^n$. For instance, these manifolds can be constructed in various dimensions, including 3 dimensions exemplified by the Weeks and Thurston manifolds. Moreover, we can consider the three-dimensional torus $\mathbb{T}^3$ which is a compact manifold and of class $C^\infty$. The set $\Gamma$ can be chosen as an arbitrary closed set of each manifold.  $\widetilde{M}$ can be a finite set of points, as it is always possible to select a finite subset of a compact manifold. Similarly, $\widetilde{\Gamma}$ can be chosen as a finite subset of $\widetilde{M}$, and the properties of the sets $\Gamma$ and $\widetilde{\Gamma}$ are satisfied.


The length scale parameter $\e$ allows us to consider the data density. In fact, scaling $\eta$ by $\e$ aims to give significant weight to pairs of points that are far apart up to distance $\e$. In order to capture proper interactions at scale $\e$, $\eta$ has to decay to zero at an appropriate rate. Our assumptions on $\eta$ are as follows:  \\
\fbox{\parbox{0.975\textwidth}{
\begin{enumerate}[label=({\textbf{H.\arabic*}}),start=9]
    \item $\eta$ is a non-negative function. \label{chap4_etapos}
    \item $\exists \;r_{\eta}>0$ such that $\supp (\eta) \subset [0, r_\eta]$.\label{chap4_etasupp}
    \item $\exists \; a \in ]0, r_\eta[$ such that $\eta$ is decreasing on $[0,a]$ and satisfies $\eta(a)>0$. We denote by $c_\eta=\eta(a)$.\label{chap4_eta:dec}
    \item $\eta$ is $\Lip{\eta}$-lipschitz  continuous on its support.\label{chap4_eta:lip}
\end{enumerate}}}\\
These assumptions on the kernel are standard, see for example \cite{calder2019consistency,fadili2023limits}.

\subsection{Contributions}
First, we show that the local Eikonal equation \eqref{chap4_eikonal-eq} and the non-local one \eqref{chap4_eikonal-eq-discrete} are well-posed. In other terms, we prove that the solution to \eqref{chap4_eikonal-eq} and to \eqref{chap4_eikonal-eq-discrete} exist and are unique in the sense of viscosity solutions using Perron's method and the comparison principle (see Proposition \ref{chap4_prop:exis-uniq} and Proposition \ref{chap4_prop:existence-J}). Then we show regularity properties of these solutions in time and space (see Theorem \ref{chap4_lip-viscosity} and Theorem~\ref{chap4_lip-viscosity-J}).  We then derive error bounds between the solution to the local problem \eqref{chap4_eikonal-eq} and the one to the non-local problem \eqref{chap4_eikonal-eq-discrete} using the regularity properties (see Theorem \ref{chap4_thm:continuous-time-estimate}). We then use the forward Euler scheme to discretize \eqref{chap4_eikonal-eq-discrete} in time and we provide a consistency result that provides an error bound between the solution of the discretized problem \eqref{chap4_eikonal-eq-discrete-fw} and \eqref{chap4_eikonal-eq} (see Theorem \ref{chap4_thm:discrete-fw-estimate}). Finally, we apply these results to a sequence of random geometric graphs with $n$ vertices (see Theorem \ref{chap4_thm:graph-bw-estimate}). In particular, we establish that the solution to the time-discretized problem \eqref{chap4_eikonal-eq-discrete-fw} converges almost surely uniformly to the viscosity solution to the local problem \eqref{chap4_eikonal-eq} as the kernel scale parameter decreases at an appropriate rate as $n \to +\infty$ and the time step $\Delta t \to 0$, hence answering all our questions asked above. 

\subsection{Related work}
This work is in the continuity of our previous one \cite{fadili2023limits} where we studied limits and consistency of non-local and graph approximations of the time-dependent Eikonal equation defined on Euclidean spaces. We here extend this work to the case of smooth Riemannian manifolds, which are in fact much more in line with realistic applications.  This extension is far from trivial as it raises several difficulties and necessitates several new and careful estimates.

Though several works have considered Hamilton-Jacobi type equations on graphs whose vertices are defined on a  manifold \cite{calder2022hamilton,desquesnes2017nonmonotonic,desquesnes2013eikonal,elmoataz2008nonlocal,memoli2005distance,ta2009adaptation}, only a few of them have studied their continuum limits \cite{calder2022hamilton,memoli2005distance}. Motivated by supervised learning and data depth applications, the authors of \cite{calder2022hamilton} studied the $p$-Eikonal equation on a random geometric graph where the vertices of the graph are i.i.d random variables on an open, bounded and connected subset of $\R^m$ with a $C^{1,1}$ boundary and the kernel is smooth, non-increasing and satisfies several conditions. They prove that the continuum limit of the non-local $p$-Eikonal equation is a state-constrained Eikonal equation that recovers a geodesic density weighted distance. A theoretical and computational framework was proposed for computing intrinsic distance functions and geodesics on hypersurfaces \cite{memoli2001fast} and submanifolds \cite{memoli2005distance} given by point clouds. For this, the authors proposed to replace the intrinsic Eikonal equation on the submanifold by the corresponding extrinsic Euclidean one on an offset band and proved that this approximation is consistent. Our work goes much beyond and tackles a more general Eikonal equation defined on arbitrary geometric weighted graphs whose vertices live on a compact Riemannian submanifold. We also offer a computational framework by Forward Euler discretization in time of \eqref{chap4_eikonal-eq-discrete}. Our framework also allows to cover a much wider spectrum of applications which include the previous ones as particular cases.


\subsection{Outline}
The paper is structured as follows. Section~\ref{chap4_sec:def-manifold} provides prerequisites on Riemannian manifolds that are necessary to our exposition. Section~\ref{chap4_sec:exist-reg} is dedicated to establishing well-posedness in the viscosity sense of problems \eqref{chap4_eikonal-eq} and \eqref{chap4_eikonal-eq-discrete}. Section~\ref{chap4_sec:main} contains the key results of this paper. Section~\ref{chap4_subsec:main-cont} provides an error bound between the solutions to \eqref{chap4_eikonal-eq} and \eqref{chap4_eikonal-eq-discrete} in continuous time. Section~\ref{chap4_subsec:eikconvdiscrete} extends this to \eqref{chap4_eikonal-eq} and \eqref{chap4_eikonal-eq-discrete-fw}, where \eqref{chap4_eikonal-eq-discrete-fw} is a forward/explicit Euler discretization in time of \eqref{chap4_eikonal-eq-discrete}. Our results are finally specialized to the  case of geometric weighted graphs on submanifolds in Section~\ref{chap4_sec:eikconvgraphs}.

\section{Notations and prerequisites on Riemannian manifolds} \label{chap4_sec:def-manifold}

\subsection{Preliminaries on Riemannian manifolds}
The definitions and results we are about to recall are well-known in Riemannian and differential geometry and we refer for example to \cite{lang2012fundamentals,lee2009manifolds,lee2013smooth,petersen2006riemannian} for a detailed account.

\begin{defn}[$C^3$-smooth manifold]\label{chap4_def:MclassC2}
An $m^*$-dimensional manifold $\M$ is of class $C^3$ at a point $x \in \M$ if there exists a chart $(U,\varphi)$ around $x$ such that $U$ is an open set in $\M$ containing $x$ and $ \varphi: U \to \R^{m^*} $ is a $C^3$-diffeomorphism. We say that $\M$ is of class $C^3$ if it is of class $C^3$ at each point $x\in \M$.
\end{defn}



\begin{defn}[Riemannian metric and norm]
A Riemannian metric $g$ on $\M$ is a family of inner products on tangent spaces of $\M$. In other words, for each $x \in \M$, the mapping $g_x:T_x\M \times T_x\M \To \R$, where $T_x\M$ is the tangent space of $\M$ at the point $x$, is a bilinear symmetric positive definite form denoted by $g_x(u,v) = \dotp{u}{v}_x$ for any vectors $u,v \in T_x\M$. This Riemannian inner product induces a norm $\norm{.}_x$ on $T_x\M$ defined by $\norm{v}_x=\sqrt{\dotp{v}{v}_x}$, for $v \in T_x\M$.   
\end{defn}
A (smooth) manifold whose tangent spaces are endowed with a smoothly varying inner product is called a Riemannian manifold. The smoothly varying inner product is called the Riemannian metric. Strictly speaking, a Riemannian manifold is thus a couple $(\M, g)$, where $\M$ is a manifold and $g$ is a Riemannian metric on $\M$. Nevertheless, when the Riemannian metric is unimportant, we simply talk about the Riemannian manifold $\M$.


In order to define viscosity solutions, we will need to give the definition of a differentiable function on a manifold as well as its differential and gradient. 

\begin{defn}[Differentiable functions on $\M$]
Suppose that $\M$ satisfies assumption \ref{chap4_assum:M}. A real-valued function $f:\M \to \R$ is called differentiable (resp. $C^1$) at a point $x \in \M$ if there exists a chart $(U,\varphi)$ around $x$ such that 
$f\circ \varphi^{-1} : \varphi(U) \subset \R^{m^*} \to \R$ is differentiable (resp. $C^1$) at $\varphi(x)$, where $U$ is an open set in $\M$ containing $x$ and $\varphi:U \to \R^{m^*}$ is a homeomorphism. The function $f$ is differentiable (resp. $C^1$) in $\M$ if it is differentiable (resp. $C^1$) at every point in $\M$. 
\end{defn}

We would like to note here that the definition of differentiability does not depend on the choice of the chart at $x$. Indeed, given any other chart $(V,\phi)$ around $x$ where $V$ is an open set in $\M$ containing $x$ and $\phi:V \to \R^{m^*}$ is a homeomorphism, we have
$$f\circ \phi^{-1}=(f\circ \varphi^{-1})\circ(\varphi \circ \phi^{-1}): \phi(U\cap V) \To \R $$ is differentiable at $\phi(x)$ since $f\circ \varphi^{-1}$ is differentiable at $\varphi(x)$ and the transition map $\varphi \circ \phi^{-1}$ is differentiable at $\phi(x)$.

\begin{defn}[Differential and gradient of a differentiable function]
Suppose that $\M$ satisfies assumption \ref{chap4_assum:M}. Let $f$ be a scalar-valued differentiable function at $x \in \M$. The differential of $f$ at $x$ is the linear map
 \begin{align*}
    \rmD f(x) : T_x\M &\to \R \\
            v &\mapsto  \rmD f(x)[v] = \left.\frac{ d}{{ d}s} f ( \gamma (s)) \right|_{s=0} ,
 \end{align*}
where $\gamma :\; ]-1,1[\; \mapsto \M$ is a differentiable curve in $\M$ with $\gamma (0)=x$ and $\gamma'(0)=v$. The gradient of $f$ at $x$, denoted by $\grad f(x)$, is the unique element of $T_x\M$ that satisfies
\[
\dotp{\grad f(x)}{v}_x = \rmD f(x)[v], \quad \forall v \in T_x\M .
\]
\end{defn}
When $f$ depends on several parameters and the variable with respect to which the gradient is computed is not clear from the context, we specify it as a subscript of $\grad$.
 
\begin{rem}
Observe that $\rmD f(x) $ is in the cotangent space $T^*_x\M $ which is the dual space of the tangent space $T_x\M$. Moreover, for a given tangent vector $v$, the directional derivative $\rmD f(x)[v]$ is independent of the choice of the curve $\gamma$. In fact, if $\g_1$ and $\g_2$ are two curves such that $\g_1(0)=\g_2(0)=x$, and in any coordinate chart $\phi$, $\frac{d}{dt}\phi \circ \g_1 |_{t=0}=\frac{d}{dt}\phi \circ \g_2 |_{t=0}$, then by the chain rule, $f$ has the same directional derivative at $x$ along $\g_1$ and $\g_2$.
\end{rem}

The gradient of a function $f$ has the following remarkable steepest-ascent property
\[
\|\grad f(x)\|_x = \max\left\{\rmD f(x)[v]: \, v \in T_x\M,\|v\|_x \leq 1\right\} .
\]
This can also be equivalently written as
\[
\|\grad f(x)\|_x = \max\left\{\rmD f(x)[v]: \, v \in T_x\M,\|v\|_x = 1\right\} = \sup\left\{\rmD f(x)[v]: \, v \in T_x\M,\|v\|_x < 1\right\} .
\]



\subsection{Properties of the Riemannian distance}
The metric of the Riemannian manifold $(\M,g)$ allows to define the length of a curve as follows. 
\begin{defn}[Length of a curve]
The length of a piecewise smooth curve segment $\g:[a,b] \to \M$ on a Riemannian manifold $(\M,g)$ is defined by
\begin{equation*}
L(\g)= \int_a^b \norma{\dot{\g}(s)}_{\g(s)} ds ,
\end{equation*}
where $\dot{\g}(s)$ is the velocity vector of the curve $\g$ at $s$.
\end{defn}
\begin{rem}
This length is independent of the parametrization. In other words, if $\tilde{\g}$ is any reparametrization of $\g$, i.e. $\tilde{\g}=\g \circ \varphi$, where $\varphi:[c,d] \to [a,b]$ is a diffeomorphism, then $L(\tilde{\g})=L(\g)$ (see \cite[Proposition 13.25]{lee2013smooth}).
\end{rem}

We are now ready to introduce the notion of Riemannian distance between any pair of points in $\M$. 
\begin{defn}[Riemannian distance]
The Riemannian distance between two points $x$ and $y$ in $\M$, denoted by $d_{\M}(x,y)$, is defined by
\begin{equation*}
d_{\M}(x,y) = \inf\{L(\g):  \g \text{ is a piecewise smooth curve segment on $\M$ joining $x$ and $y$} \}.
\end{equation*}
We define the closed Riemannian ball of center $x$ and radius $r>0$ as
\begin{equation*}
B_\M(x,r)=\{y \in \M : d_\M(x,y) \leq r\}.
\end{equation*}
\end{defn}
Since any pair of points in $\M$ can be joined by a piecewise smooth curve segment (\cite[Proposition 11.33]{lee2013smooth}), the above is well-defined. The Riemannian distance function turns $\M$ into a metric
space whose topology is the same as the given manifold topology; see \cite[Theorem~13.29]{lee2013smooth}.

Geodesics generalize the notion of straight lines on curved spaces. A geodesic $\g$ on a manifold $\M$ endowed with an affine connection is a curve with zero acceleration (i.e., constant speed).

\begin{defn}[Exponential map]
The exponential map is the mapping 
\begin{equation*}
\begin{aligned}
    \Exp_x: T_x\M &\to \M \\
    		v &\mapsto \gamma(1),
\end{aligned}
\end{equation*}
where $\gamma$ is the unique geodesic such that $\g(0)=x$ and $\dot{\g}(0)=v$. Existence and uniqueness of the geodesic is ensured whenever $v \in B(0_x,\e) \subset T_x\M$ with $\e$ small enough so that for every $t \in [0,1]$, $\Exp_x(tv) = \gamma(t)$.
\end{defn}

The regularity of the distance function on a manifold is a classical and well-understood subject. For instance, the behavior of the distance function is closely related to the structure of the notion of the cut locus of a point.
\begin{defn}[Cut locus of a point]
A point $y$ of $\M$ is in the cut locus of $x$, denoted by $\cut(x)$, if and only if there is a minimal geodesic joining $x$ to $y$ whose every extension beyond $y$ is no longer minimal. 
\end{defn}
An equivalent characterization of the cut locus can be found in \cite[Theorem~1]{wolter1979distance}.
\begin{ex}
On the Euclidean sphere, the cut locus of a point $x$ is its antipodal point. On the surface of an infinitely long cylinder, the cut locus of a point consists of the line opposite to it.
\end{ex}

The following proposition, proved in \cite{azagra2005nonsmooth} and used in \cite{azagra2006maximum}, provides us with some sufficient conditions for the distance function to be locally of class $C^\infty$.
\begin{prop}[Local smoothness of the distance function~{\cite[Proposition~3.9]{azagra2005nonsmooth}}]\label{chap4_prop:dist-exp}
Let $\M$ be a compact Riemannian manifold. Then there exists a constant $r>0$ such that for every $x \in \M$, the exponential map $\Exp_x$ is defined on $B(0_x,r) \subset T_x\M$ and provides a $C^\infty$ diffeomorphism 
\[
\Exp_x: B(0_x,r) \to B_\M(x,r) \eqdef \Exp_x(B(0_x,r)) .
\] 
Moreover, the distance function is given by 
\[
d_\M(x,y)=\norm{\Exp_x^{-1}(y)}_x \quad \text{for all } y \in B_\M(x,r)
\]
and for every $x \in \M$, the distance map $y \in \M \mapsto d_\M(x,y)$ is of class $C^\infty$ on $B_\M(x,r) \setminus \{x\}$.
\end{prop}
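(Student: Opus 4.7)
The plan is to combine three classical ingredients: completeness of compact Riemannian manifolds (via Hopf--Rinow), a uniform lower bound on the injectivity radius obtained from compactness, and the Gauss lemma to identify the image of a small tangent ball with the metric ball and recover the distance as a norm.

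First, I would invoke Hopf--Rinow: since $\M$ is compact it is geodesically complete, so for every $x \in \M$ the exponential map $\Exp_x$ is defined on the whole of $T_x\M$ and $\rmD \Exp_x(0_x) = \mathrm{id}_{T_x\M}$ under the canonical identification $T_{0_x}(T_x\M) \simeq T_x\M$. The inverse function theorem then produces, for each $x$, a radius $r(x) > 0$ such that $\Exp_x$ restricts to a $C^\infty$ diffeomorphism from $B(0_x,r(x)) \subset T_x\M$ onto an open neighbourhood of $x$ in $\M$. To upgrade this to a uniform $r>0$ independent of $x$, I would use the fact that the injectivity radius $x \mapsto \inj(x)$ is continuous (hence attains its positive infimum on the compact manifold $\M$); set $r \eqdef \inj(\M)/2 > 0$.

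Next, I would verify the two remaining assertions on this uniform ball. For the identification $\Exp_x(B(0_x,r)) = B_\M(x,r)$ and the formula $d_\M(x,y) = \|\Exp_x^{-1}(y)\|_x$, I would rely on the Gauss lemma: radial geodesics $t \mapsto \Exp_x(tv)$ are orthogonal (in the Riemannian metric) to the geodesic spheres $\Exp_x(\partial B(0_x,\rho))$, and this implies that for any piecewise smooth curve $\sigma$ joining $x$ to $y = \Exp_x(v)$ with $\|v\|_x < r$, one has $L(\sigma) \geq \|v\|_x$ with equality iff $\sigma$ is a reparametrization of the radial geodesic. Consequently, radial geodesics are the unique minimizers inside the injectivity ball and the distance coincides with the norm of the pre-image in $T_x\M$. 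In particular, $\Exp_x(B(0_x,r)) \subset B_\M(x,r)$, and the reverse inclusion follows from the definition of the injectivity radius (any $y \in B_\M(x,r)$ is reached by a minimizing geodesic of length $d_\M(x,y) < r$, whose initial velocity $v \in T_x\M$ satisfies $\|v\|_x = d_\M(x,y) < r$ and $\Exp_x(v) = y$).

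Finally, the smoothness of $y \mapsto d_\M(x,y)$ on $B_\M(x,r) \setminus \{x\}$ is immediate from the identity $d_\M(x,y) = \|\Exp_x^{-1}(y)\|_x$: the map $\Exp_x^{-1} : B_\M(x,r) \to B(0_x,r)$ is a $C^\infty$ diffeomorphism by the previous step, the norm $v \mapsto \|v\|_x = \sqrt{g_x(v,v)}$ is $C^\infty$ on $T_x\M \setminus \{0_x\}$, and the composition of these two maps is therefore $C^\infty$ on $B_\M(x,r) \setminus \{x\}$. The main obstacle is the Gauss lemma step: it is the only non-formal part of the argument, and one must be careful to use it both to establish minimality of radial geodesics (yielding the distance formula) and to exclude competing curves that leave and re-enter the injectivity ball, for which the uniform choice $r = \inj(\M)/2$ is essential.
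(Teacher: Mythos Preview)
The paper does not supply its own proof of this proposition: it is simply quoted from \cite{azagra2005nonsmooth} (Proposition~3.9 there), so there is no in-paper argument to compare against. Your proposal is the standard textbook proof of this classical fact and is correct: Hopf--Rinow for global definition of $\Exp_x$, continuity of the injectivity radius on a compact manifold to get a uniform $r>0$, the Gauss lemma to show radial geodesics minimize (hence $d_\M(x,y)=\|\Exp_x^{-1}(y)\|_x$ and $\Exp_x(B(0_x,r))=B_\M(x,r)$), and smoothness of the composition $\|\cdot\|_x \circ \Exp_x^{-1}$ away from $x$.

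Two minor remarks. First, taking $r=\inj(\M)/2$ is safe but not needed: any $r<\inj(\M)$ already suffices, since by definition of the injectivity radius, for $\|v\|_x<\inj(\M)$ the radial geodesic $t\mapsto\Exp_x(tv)$ is the unique minimizer to its endpoint, and the Gauss lemma comparison argument handles curves that exit $B_\M(x,r)$ without extra slack. Second, the proposition as stated asserts a $C^\infty$ diffeomorphism, which presupposes a smooth Riemannian structure; under the paper's standing hypothesis \ref{chap4_assum:M} ($\M$ of class $C^3$) one only gets finite differentiability of $\Exp_x$ and of the distance, but this is a discrepancy in the paper's statement rather than in your argument.
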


The importance of the cut locus is that the distance function on $\M$ from a point $x$ is differentiable except on the cut locus of $x$ and $x$ itself. 
Moreover, we have the following useful result about its gradient given in \cite{itoh2007cut}.
\begin{lem}[Gradient of the distance function]\label{chap4_lem:grad-dist}
Let $x \in \M$. For any $y \in \M \backslash (\cut(x) \cup \{x\})$, if $\g$ is the unique minimal geodesic from $x$ to $y$, then the gradient of $d_\M(x,.)$ (with respect to the second argument) at $y$ is given by
\begin{equation*}
\grad_y d_\M(x,y)=\dot{\g}(d_\M(x,y)).
\end{equation*}
\end{lem}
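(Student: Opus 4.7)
The plan is to parametrize the minimal geodesic $\gamma$ from $x$ to $y$ by arclength on $[0,L]$ with $L = d_\M(x,y)$, exploit the smoothness of $d_\M(x,\cdot)$ near $y$ guaranteed by the preceding proposition (since $y \notin \cut(x) \cup \{x\}$), and then combine the chain rule along $\gamma$ with the fact that $d_\M(x,\cdot)$ is $1$-Lipschitz to force equality in a Cauchy--Schwarz inequality.

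First I would note that by Proposition~\ref{chap4_prop:dist-exp} (or rather its extension away from the cut locus), the function $\varphi(\cdot) \eqdef d_\M(x,\cdot)$ is $C^1$ on a neighbourhood of $y$. Let $\gamma:[0,L] \to \M$ be the unique unit-speed minimal geodesic with $\gamma(0)=x$, $\gamma(L)=y$. By minimality, every subsegment of $\gamma$ is itself minimizing, so $\varphi(\gamma(s)) = s$ for all $s$ in a neighbourhood of $L$ in $[0,L]$. Differentiating at $s=L$ and applying the definition of the gradient yields
\begin{equation*}
\dotp{\grad \varphi(y)}{\dot\gamma(L)}_y = \rmD \varphi(y)[\dot\gamma(L)] = \frac{d}{ds}\varphi(\gamma(s))\Big|_{s=L} = 1.
\end{equation*}

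Next I would show that $\|\grad \varphi(y)\|_y \le 1$. This follows from the triangle inequality: for any unit tangent vector $v \in T_y\M$, choose a smooth curve $\sigma:(-\delta,\delta)\to\M$ with $\sigma(0)=y$ and $\dot\sigma(0)=v$; then $|\varphi(\sigma(t))-\varphi(y)| \le d_\M(\sigma(t),y) \le L(\sigma|_{[0,t]}) = |t| + o(t)$, so dividing by $t$ and letting $t\to 0$ gives $|\rmD\varphi(y)[v]| \le 1$. By the steepest-ascent characterization of the gradient recalled in the excerpt, this means $\|\grad\varphi(y)\|_y \le 1$.

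Combining the two facts: since $\|\dot\gamma(L)\|_y = 1$, Cauchy--Schwarz gives
\begin{equation*}
1 = \dotp{\grad \varphi(y)}{\dot\gamma(L)}_y \le \|\grad \varphi(y)\|_y \, \|\dot\gamma(L)\|_y \le 1,
\end{equation*}
so both inequalities are equalities. The equality case in Cauchy--Schwarz, together with $\|\dot\gamma(L)\|_y = 1 = \|\grad\varphi(y)\|_y$, forces $\grad_y d_\M(x,y) = \dot\gamma(L) = \dot\gamma(d_\M(x,y))$, which is the claimed formula.

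The only delicate point is the smoothness/differentiability of $\varphi$ at $y$, but this is a direct consequence of the hypothesis $y \notin \cut(x)\cup\{x\}$ together with Proposition~\ref{chap4_prop:dist-exp} (extended off the injectivity ball by the standard fact that $d_\M(x,\cdot)$ is $C^\infty$ on $\M\setminus(\cut(x)\cup\{x\})$); everything else is a short computation, so I do not anticipate a substantive obstacle.
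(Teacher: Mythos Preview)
Your argument is correct and is the standard proof of this fact. Note, however, that the paper does not supply its own proof of Lemma~\ref{chap4_lem:grad-dist}: it simply records the statement and cites \cite{itoh2007cut}. So there is no ``paper's proof'' to compare against; what you have written is a self-contained justification that the paper omits. The chain-rule identity $\varphi(\gamma(s))=s$ plus the $1$-Lipschitz bound and equality in Cauchy--Schwarz is exactly how one usually establishes the formula, and your handling of the smoothness issue (appealing to the standard fact that $d_\M(x,\cdot)$ is $C^\infty$ on $\M\setminus(\cut(x)\cup\{x\})$, of which Proposition~\ref{chap4_prop:dist-exp} is a local version) is appropriate.
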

The squared distance function from a point $y \in \M$ on a Riemannian manifold is smooth away from the cut locus of $y$, for a fixed point $x \in \M$ (see \cite{wolter1979distance}). 
As a consequence of the Gauss lemma (see \cite[Lemma~3.5]{do1992riemannian}), a closed form of the gradient of the squared distance function can be obtained.
\begin{lem}[Gradient of the squared distance {\cite[Lemma~4.43]{chow2007ricci}}]\label{chap4_lem:squared-distance}
Let $x \in \M$. For any $y \in \M \setminus \cut\{x\}$, the gradient of $d_\M(x,.)^2$ (with respect to the second argument) at $y$ is given by\footnote{Of course, by symmetry, the role of $x$ and $y$ can be interchanged. This is the reason we did not indicate the variable as a subscript of $\grad$.}
\begin{equation*}
    \grad_y d_\M^2(x,y)=-2\Exp^{-1}_y{(x)} \in T_y\M .
\end{equation*}
\end{lem}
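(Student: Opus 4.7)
The plan is to combine the chain rule, the formula for $\grad_y d_\M(x,y)$ recalled in Lemma~\ref{chap4_lem:grad-dist}, and a careful reversal of the minimizing geodesic joining $x$ and $y$ in order to identify the resulting tangent vector with $-2\Exp_y^{-1}(x)$.

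First, since $y\notin\cut(x)$, the relation ``belonging to the cut locus of'' being symmetric, we also have $x\notin\cut(y)$, so in particular $\Exp_y^{-1}(x)$ is well-defined as an element of $T_y\M$ (the exponential map is a diffeomorphism from a neighbourhood of $0_y$ onto a neighbourhood of $y$ containing $x$, by Proposition~\ref{chap4_prop:dist-exp} applied along the geodesic). Moreover there is a unique unit-speed minimizing geodesic $\g:[0,L]\to\M$ with $\g(0)=x$, $\g(L)=y$ where $L\eqdef d_\M(x,y)$.

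Next I would apply Lemma~\ref{chap4_lem:grad-dist} to get $\grad_y d_\M(x,y)=\dot{\g}(L)\in T_y\M$, which is a unit vector. The squared distance $d_\M^2(x,\cdot)=\bpa{d_\M(x,\cdot)}^2$ is differentiable at $y$ because $d_\M(x,\cdot)$ is, and the ordinary chain rule for real-valued functions on a manifold yields
\begin{equation*}
\grad_y d_\M^2(x,y) \;=\; 2\,d_\M(x,y)\,\grad_y d_\M(x,y) \;=\; 2L\,\dot{\g}(L).
\end{equation*}

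It then remains to identify $-2\Exp_y^{-1}(x)$ with $2L\,\dot{\g}(L)$. For this I would reverse and rescale the geodesic $\g$: define $\sigma:[0,1]\to\M$ by $\sigma(t)=\g\bpa{L(1-t)}$. Since any affine reparametrization of a geodesic is again a geodesic, $\sigma$ is the unique geodesic with $\sigma(0)=y$, $\sigma(1)=x$, and its initial velocity is $\dot{\sigma}(0)=-L\,\dot{\g}(L)$. By the very definition of $\Exp_y$ one has $\Exp_y\bpa{\dot{\sigma}(0)}=\sigma(1)=x$, hence
\begin{equation*}
\Exp_y^{-1}(x) \;=\; -L\,\dot{\g}(L),
\end{equation*}
and substituting into the previous display gives $\grad_y d_\M^2(x,y)=-2\Exp_y^{-1}(x)$, as required.

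The only delicate point is to justify that the exponential map may indeed be inverted at $x$ from $T_y\M$; the rest is elementary once Lemma~\ref{chap4_lem:grad-dist} is invoked. This invertibility hinges on the symmetry of the cut-locus relation (if $y\notin\cut(x)$ then $x\notin\cut(y)$), together with Proposition~\ref{chap4_prop:dist-exp} guaranteeing that $\Exp_y$ is a local diffeomorphism along the minimizing geodesic from $y$ to $x$; I expect this to be the main (but standard) technical step in an otherwise routine argument.
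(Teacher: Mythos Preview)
The paper does not actually prove this lemma: it is stated with a citation to \cite[Lemma~4.43]{chow2007ricci} and the only justification offered in the text is the sentence ``As a consequence of the Gauss lemma \ldots'' preceding the statement. So there is no proof in the paper to compare against.

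Your argument is correct and is the standard derivation. Applying the chain rule together with Lemma~\ref{chap4_lem:grad-dist} gives $\grad_y d_\M^2(x,y)=2L\,\dot\g(L)$, and reversing the unit-speed geodesic identifies $\Exp_y^{-1}(x)$ with $-L\,\dot\g(L)$; this is exactly how the formula is obtained from the Gauss lemma in the reference the paper cites. One small caveat: your route through Lemma~\ref{chap4_lem:grad-dist} requires $y\neq x$ (since $d_\M(x,\cdot)$ is not differentiable at $x$), whereas the stated formula also holds trivially at $y=x$ because both sides vanish there; you may want to treat that case separately. The invertibility of $\Exp_y$ that you flag as the only delicate point is indeed standard: symmetry of the cut locus ensures $x\notin\cut(y)$, and then $\Exp_y$ is a diffeomorphism on a ball of $T_y\M$ containing $\Exp_y^{-1}(x)$.
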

\begin{rem} \label{chap4_rem:sq-dis-diff} 
Assumption~\ref{chap4_assum:dis-reg} guarantees that $d^2_{\M}(x,\cdot)$ is continuously differentiable on $\mathcal{N}^\delta_x$ with a gradient given by Lemma~\ref{chap4_lem:squared-distance}. 
\end{rem}

\subsection{Other notations}
We denote by $\abs{.}$ the Euclidean norm in $\R^{m}$, where the dimension $m$ is to be understood from the context, $\LIP{A}$ the space of Lipschitz continuous mappings on $A$ and $\Lip{h}$ the Lipschitz constant of $h \in \LIP {A}$.
Let $X$ and $Y$ be two non-empty subsets of $\M$. We define their Hausdorff distance as 
\[
\distH(X,Y) = \max\pa{\sup_{x \in X} d_\M(x,Y),\sup_{y \in Y} d_\M(y,X)} .
\]
If both $X$ and $Y$ are bounded, then $\distH(X,Y)$ is finite. Moreover, $\distH(X,Y)=0$ if and only if $X$ and $Y$ have the same closure.\\
The supremum norm on a domain $A \subset \M$ is denoted by $\normL{\cdot}_{L^\infty(A)}$.
We denote the space-time cylinders by $\M_T \eqdef \M \times [0,T]$ and $\partial \M_T \eqdef (\G \times ]0,T[) \cup \M \times \set{0}.$

\section{Well-posedness and regularity results} \label{chap4_sec:exist-reg}
In this section, we study the well-posedness in the viscosity sense as well as regularity properties of the solutions to the local Eikonal equation \eqref{chap4_eikonal-eq} (see Section \ref{chap4_subsec:exis-reg-local}) and to the non-local Eikonal equation \eqref{chap4_eikonal-eq-discrete} (see Section \ref{chap4_subsec:exist-reg-nonlocal}). Existence can be obtained by the Perron's method recalled in Theorem \ref{chap4_perron} while uniqueness and continuity are based on a comparison principle, proved in Proposition \ref{chap4_Comp-eikonale} for the local problem and in Proposition \ref{chap4_comp-discrete} for the non-local one. Our work is based on the theory of viscosity solutions which was introduced by Crandall and Lions \cite{crandall1984existence} for solving first-order Hamilton-Jacobi equations. We refer to \cite{fathi2012weak} for a general introduction to viscosity solutions on Riemannian manifolds. 
\nomenclature{$\grad^+$}{superdifferential set}
\nomenclature{$\grad^-$}{subdifferential set}
\nomenclature{$|.|$}{Euclidean norm}

\subsection{Problem \eqref{chap4_eikonal-eq}}\label{chap4_subsec:exis-reg-local}
In order to define viscosity solution for problem \eqref{chap4_eikonal-eq},
we first recall the definition of upper and lower semi-continuous envelope for a locally bounded function $f:\M_T \to \R$, respectively given by 
\[
f^*(x,t) \eqdef \limsup_{(y,s)\to (x,t)} f(y,s) \qandq 
f_*(x,t) \eqdef \liminf_{(y,s)\to (x,t)} f(y,s).
\]

\begin{defn}[Viscosity solution for \eqref{chap4_eikonal-eq}] \label{chap4_def:visccauchy}
An upper semi-continuous (usc) function $f:\M \To \R$ is a viscosity sub-solution of $\eqref{chap4_eikonal-eq}$ in $\Mt$ if for every point $(x_0,t_0) \in \Mt$ and every $C^1$ function $\varphi:\mathcal{N}^h_{(x_0,t_0)} \To \R$ 
where $\mathcal{N}^h_{(x_0,t_0)} = \{(y,s) \in \Mt, \; \abs{t_0-s}\leq h \text{ and } d_\M(x_0,y)\leq h \}$ 
such that $f-\varphi$ has a maximum point at $(x_0,t_0)$, we have 
$$\ddt \varphi(x_0,t_0) + \|\grad \varphi(x_0,t_0)\|_{x_0} \leq P(x_0).$$
The function $f$ is a viscosity sub-solution of \eqref{chap4_eikonal-eq} in $\M_T$ if it satisfies moreover $f(x,t) \leq f_0(x)$ for all $(x,t) \in \partial \M_T.$

A lower semi-continuous (lsc) function $f:\M \To \R$ is a viscosity super-solution of $\eqref{chap4_eikonal-eq}$ in $\Mt$ if for every point $(x_0,t_0) \in \Mt$ and every $C^1$ function $\varphi:\mathcal{N}^h_{(x_0,t_0)} \To \R$ such that $f-\varphi$ has a minimum point at $(x_0,t_0)$, we have 
$$\ddt \varphi(x_0,t_0) + \|\grad \varphi(x_0,t_0)\|_{x_0} \geq P(x_0).$$
The function $f$ is a viscosity super-solution of \eqref{chap4_eikonal-eq} in $\M_T$ if it satisfies moreover $f(x,t) \geq f_0(x)$ for all $(x,t) \in \partial \M_T.$

Finally, a locally bounded function $f:\M_T \to \R$ is a viscosity solution of $\eqref{chap4_eikonal-eq}$ in $\M_T$ (resp. in $\Mt$) if $f^*$ is a viscosity sub-solution in $\M_T$ (resp. in $\Mt$) and $f_*$ is a viscosity super-solution of $\eqref{chap4_eikonal-eq}$ in $\M_T$ (resp. in $\Mt$).
\end{defn}

\begin{rem}\label{chap4_rem:viscsol}
    In the definition \ref{chap4_def:visccauchy}, the test function $\varphi$ can be extended to a regular function $\psi$ defined on $\M_T$ such that 
    \begin{equation*}
\psi(x,t) = 
    \begin{cases}
        \varphi(x,t) &\text{ if } (x,t) \in \mathcal{N}^h_{(x,t)}, \\
        \phi(x,t) &\text{ otherwise},
    \end{cases}
\end{equation*}
\end{rem}
where $\varphi$ is $C^1$ on $\mathcal{N}^h_{(x,t)}$. Moreover, $$\|\grad \psi(x_0,t_0)\|_{x_0} = \|\grad \varphi(x_0,t_0)\|_{x_0}.$$

We begin with a comparison principle for problem \eqref{chap4_eikonal-eq}.

\begin{prop}[Comparison principle for \eqref{chap4_eikonal-eq}] \label{chap4_Comp-eikonale}
Suppose that assumptions \ref{chap4_assum:M}-\ref{chap4_assum:f_0} and \ref{chap4_assum:dis-reg} hold. Let $f$, an usc function, be a sub-solution of \eqref{chap4_eikonal-eq} and $g$, a lsc function, be a super-solution of \eqref{chap4_eikonal-eq} in $\M_T$. Then 
$$ f\leq g \quad \text{ on } \M_T.$$
\end{prop}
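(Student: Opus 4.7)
The plan is to adapt to the Riemannian setting the classical doubling-of-variables technique for viscosity solutions of first-order Hamilton--Jacobi equations, with the squared Riemannian distance $d_\M^2$ replacing the usual Euclidean quadratic penalty. Arguing by contradiction, I would suppose $M \eqdef \sup_{\M_T}(f-g) > 0$; then for $\eta>0$ small enough, also $M_\eta \eqdef \sup_{\M_T}(f - g - \eta/(T-t)) > 0$. I would introduce on $\M_T \times \M_T$ the auxiliary function
\[
\Phi_\alpha(x,t,y,s) = f(x,t) - g(y,s) - \frac{\alpha}{2}d_\M(x,y)^2 - \frac{\alpha}{2}(t-s)^2 - \frac{\eta}{T-t},
\]
which attains its maximum at some $(x_\alpha,t_\alpha,y_\alpha,s_\alpha)$ by compactness of $\M_T$ together with the upper (resp.\ lower) semicontinuity of $f$ (resp.\ $g$) and the fact that $-\eta/(T-t)$ drives $\Phi_\alpha$ to $-\infty$ as $t\to T$.

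The first block of work is the classical penalization lemma: along a subsequence $\alpha \to \infty$, one has $\alpha d_\M(x_\alpha,y_\alpha)^2 \to 0$, $\alpha(t_\alpha - s_\alpha)^2 \to 0$, both $(x_\alpha,t_\alpha)$ and $(y_\alpha,s_\alpha)$ converge to the same limit $(\hat x,\hat t)$, and $\Phi_\alpha(x_\alpha,t_\alpha,y_\alpha,s_\alpha) \to M_\eta$. The penalty $\eta/(T-t)$ forces $\hat t < T$ since $f,g$ are bounded on $\M_T$. A short argument using the semicontinuity of $f,g$ and the regularity \ref{chap4_assum:f_0} of $f_0$ rules out $(\hat x,\hat t) \in \partial \M_T$: on $\partial \M_T$ one has $f \leq f_0$ and $g \geq f_0$, so $f(\hat x,\hat t) - g(\hat x,\hat t) \leq 0$, contradicting the fact that passing to the limit in $\Phi_\alpha \to M_\eta$ gives $f(\hat x,\hat t) - g(\hat x,\hat t) \geq M_\eta + \eta/(T-\hat t) > 0$. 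Hence $(x_\alpha,t_\alpha),(y_\alpha,s_\alpha) \in \Mt$ for every $\alpha$ sufficiently large.

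Next, since $d_\M(x_\alpha,y_\alpha)\to 0$, assumption \ref{chap4_assum:dis-reg} combined with Remark \ref{chap4_rem:sq-dis-diff} ensures that, for $\alpha$ large, $x\mapsto d_\M(x,y_\alpha)^2$ and $y\mapsto d_\M(x_\alpha,y)^2$ are $C^1$ on neighborhoods of $x_\alpha$ and $y_\alpha$ respectively. I would then plug into the sub-solution definition for $f$ at $(x_\alpha,t_\alpha)$ the $C^1$ test function
\[
\varphi(x,t) = \frac{\alpha}{2}d_\M(x,y_\alpha)^2 + \frac{\alpha}{2}(t-s_\alpha)^2 + \frac{\eta}{T-t},
\]
and compute $\grad\varphi(x_\alpha,t_\alpha) = -\alpha\,\Exp_{x_\alpha}^{-1}(y_\alpha)$ via Lemma \ref{chap4_lem:squared-distance}, whence $\|\grad\varphi(x_\alpha,t_\alpha)\|_{x_\alpha} = \alpha\, d_\M(x_\alpha,y_\alpha)$ by Proposition \ref{chap4_prop:dist-exp}. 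This yields
\[
\alpha(t_\alpha - s_\alpha) + \frac{\eta}{(T-t_\alpha)^2} + \alpha\, d_\M(x_\alpha,y_\alpha) \leq P(x_\alpha).
\]
Applying symmetrically the super-solution inequality for $g$ at $(y_\alpha,s_\alpha)$ with $\psi(y,s) = -\frac{\alpha}{2}d_\M(x_\alpha,y)^2 - \frac{\alpha}{2}(t_\alpha - s)^2$ gives
\[
\alpha(t_\alpha - s_\alpha) + \alpha\, d_\M(x_\alpha,y_\alpha) \geq P(y_\alpha).
\]
Subtracting and invoking the Lipschitz assumption \ref{chap4_assum:P} produces $\eta/(T-t_\alpha)^2 \leq \Lip{P}\, d_\M(x_\alpha,y_\alpha) \to 0$, which contradicts $\eta/(T-t_\alpha)^2 \geq \eta/T^2 > 0$.

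The main obstacle is that the Riemannian squared distance $d_\M^2$ is \emph{not} globally smooth on $\M\times\M$: it loses differentiability on the cut locus, so it cannot serve unconditionally as a penalty in the doubling-of-variables scheme. Assumption \ref{chap4_assum:dis-reg} is imposed precisely to bypass this: once the penalty has driven $x_\alpha$ and $y_\alpha$ within distance $\delta$, we are off the cut locus and Lemma \ref{chap4_lem:squared-distance} applies, making $d_\M(\cdot,y_\alpha)^2$ a legitimate piece of a $C^1$ test function. A secondary subtlety is that the test function thus constructed is only $C^1$ on a small neighborhood of $(x_\alpha,t_\alpha)$ rather than on all of $\M_T$; Remark \ref{chap4_rem:viscsol} precisely authorises such local test functions in the definition of viscosity sub- and super-solutions, so no further globalization is needed.
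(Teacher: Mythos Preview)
Your proposal is correct and follows essentially the same route as the paper's own proof: contradiction, penalization by $\eta/(T-t)$, doubling of variables with the squared Riemannian distance as the space penalty, use of \ref{chap4_assum:dis-reg} and Lemma~\ref{chap4_lem:squared-distance} to make the penalty a legitimate local $C^1$ test function once the doubled points are close, and subtraction of the two viscosity inequalities combined with the Lipschitz continuity of $P$ to reach a contradiction. The only differences are cosmetic (your $\alpha\to\infty$ is the paper's $\gamma\to 0$, and you invoke the standard penalization lemma where the paper derives the bound $d_\M(x_\gamma,y_\gamma)\le c\sqrt{\gamma}$ by hand).
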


The comparison principle can be proved using a variational principle as in \cite[Theorem~10]{azagra2006maximum} which is valid for quite general Hamilton-Jacobi equations. Nevertheless, their proof is non-constructive and imposes other assumptions on the manifold and the Hamiltonian. 
For instance, the manifold is supposed to be complete with positive convexity and injectivity radii and the Hamiltonian has to be an intrinsically uniformly continuous function. However, they also state in \cite[Remark~12]{azagra2006maximum} that their proof holds in other situations. Our equation is somewhat less general which allows to   give an alternative and more transparent constructive proof. 
\begin{proof}
We argue by contradiction and we suppose that there exists $(x,t) \in \M_T$ such that
    $$f(x,t)-g(x,t)>0.$$
    We consider the function $\Psi_{\tau}:(x,t) \in \M_T \longrightarrow f(x,t)-g(x,t) - \frac{\tau}{T-t}$ for $\tau$ sufficiently small and we set
$$M_\tau \eqdef \sup_{(x,t) \in \M_T} \Psi_{\tau} (x,t).$$
This function is upper semi-continuous on $\M_T$ which is compact by \ref{chap4_assum:M}, then the supremum is actually a maximum and is attained at a point denoted by $(x_\tau,t_\tau) \in \M_T$. Moreover, $\Psi_{\tau}(x_\tau,t_\tau) >0$ for $\tau>0$ sufficiently small, from the positivity assumption. In order to be able to use the definition of viscosity solutions, we duplicate the variable by considering, for $\gamma>0$, the test-function 
    $$\Psi_{\tau,\gamma}:(x,t,y,s) \in \M_T^2 \longrightarrow f(x,t)-g(y,s) - \frac{d_\M^2(x,y)}{2\gamma} -\frac{|t-s|^2}{2\gamma}-\frac{\tau}{T-t},$$
    and we set
    $$M_{\tau,\gamma} \eqdef \sup_{(x,t,y,s) \in \M_T^2} \Psi_{\tau,\gamma} (x,t,y,s).$$
    Again by upper semi-continuity and compactness, the supremum  $M_{\tau,\gamma}$ is actually a maximum attained at some point $(x_\gamma,t_\gamma,y_\gamma,s_\gamma) \in \M_T^2$. We also have for $\tau$ small enough
    $$M_{\tau,\gamma}=\Psi_{\tau,\gamma}(x_\gamma,t_\gamma,y_\gamma,s_\gamma) \geq \Psi_{\tau,\gamma}(x_\tau,t_\tau,x_\tau,t_\tau)=\Psi_{\tau}(x_\tau,t_\tau) =M_\tau> 0,$$
    which implies that
     $$ \frac{\abs{t_\g -s_\g}^2}{2\g} + \frac{d^2_{\M}(x_\gamma,y_\gamma)}{2\gamma} \leq f(x_\gamma,t_\gamma)-g(y_\gamma,s_\gamma) \leq \norm{f}_{L^\infty(\M_T)} + \norm{g}_{L^\infty(\M_T)}. $$
     We then get
     \begin{equation} \label{chap4_com-prin:dist-bdd}
         \abs{t_\g -s_\g},\;  d_\M(x_\gamma,y_\gamma) \leq c \sqrt{\gamma},
    \end{equation}
    where $c$ is a constant depending only on $\norm{f}_{L^\infty(\M_T)}$ and $\norm{g}_{L^\infty(\M_T)}$.
    Using classical arguments (see, e.g. \cite[Lemma 5.2]{barles2011first}), we deduce that there exists $(\Bar{x},\Bar{t}) \in \M \times [0,T[$ such that 
    \begin{equation}\label{chap4_prop:comp-prin-lem-barles}
        \begin{cases}
            x_\gamma,y_\gamma \longrightarrow \Bar{x}, \quad &\text{ as } \gamma \to 0, \\
            t_\gamma, s_\gamma \longrightarrow \Bar{t}, \quad &\text{ as } \gamma \to 0, \\
            \Psi_\tau(\Bar{x}, \Bar{t})= M_\tau.
        \end{cases}
    \end{equation}
    If $\Bar{t}=0$, then 
    $$0<M_\tau =f(\Bar{x},0)-g(\Bar{x},0)-\frac{\tau}{T} \leq f_0(\Bar{x})-f_0(\Bar{x})-\frac{\tau}{T}<0,$$
    leading to a contradiction. Then $\Bar{t}>0$, which implies, by \eqref{chap4_prop:comp-prin-lem-barles} that $t_\gamma,s_\g>0$, for $\gamma$ small enough. Moreover, if $\bar{x} \in \Gamma$, then 
    $$0 < M_{\tau} \leq f(\bar{x},\bar{t})-g(\bar{x},\bar{t}) = f_0(\bar{x})- f_0(\bar{x})=0,$$
    which is again absurd. Thus $x_\gamma, y_\g \in \M\setminus\Gamma$ for $\gamma$ small enough.

    The mapping $(x,t) \in \M_T \mapsto f(x,t) - \varphi_1(x,t)$, where $$\varphi_1(x,t) = g(y_\g,s_\g) + \frac{d_\M^2(x,y_\g)}{2\gamma} + \frac{|t-s_\g|^2}{2\gamma}+\frac{\tau}{T-t},$$
    is smooth on a small neighborhood $\mathcal{N}_{(x_\g,t_\g)}^\g$ (since $d^2_\M(\cdot,y_\gamma)$ is of class $C^1$, for $\g$ small enough, in view of \eqref{chap4_com-prin:dist-bdd} and \ref{chap4_assum:dis-reg}, see Remark \ref{chap4_rem:sq-dis-diff}). Since $f$ is a viscosity sub-solution of \eqref{chap4_eikonal-eq} and $f-\varphi_1$ reaches a maximum at $(x_\g,t_\g)$, we deduce that 
    $$\frac{t_\gamma - s_\gamma}{\gamma} + \frac{\tau}{T^2} \leq - \norma{\grad_x \left( \frac{d^2_\M(x_\gamma,y_\gamma)}{2\gamma}\right)}_{x_\gamma} +P(x_\gamma).$$
    Using Lemma \ref{chap4_lem:squared-distance} and Proposition \ref{chap4_prop:dist-exp}, we deduce that 
    \begin{equation} \label{chap4_eq:sub-sol}
        \frac{t_\gamma - s_\gamma}{\gamma} + \frac{\tau}{T^2} \leq - \frac{d_\M(x_\gamma,y_\gamma)}{\gamma} +P(x_\gamma).
    \end{equation}
   Similarly, the mapping $(y,s) \in \M_T \mapsto g(y,s) - \varphi_2(y,s)$, where 
    $$\varphi_2(y,s) =  f(x_\g,t_\g) - \frac{d_\M^2(x_\g,y)}{2\gamma} -\frac{|t_\g-s|^2}{2\gamma}-\frac{\tau}{T-t_\g},$$
    is smooth on $\mathcal{N}_{(y_\g,s_\g)}^\g$. Since $g-\varphi_2$ attains a local minimum at $(y_\gamma,s_\gamma)$ and since $g$ is a viscosity super-solution of \eqref{chap4_eikonal-eq}, we obtain that \begin{equation}\label{chap4_eq:super-sol}
        \frac{t_\gamma-s_\gamma}{\gamma} \geq - \frac{d_\M(x_\gamma,y_\gamma)}{\gamma} + P(y_\gamma).
    \end{equation}
    Now subtracting \eqref{chap4_eq:sub-sol} and \eqref{chap4_eq:super-sol}, and then using \eqref{chap4_com-prin:dist-bdd}, we obtain
    \begin{equation*}
            \frac{\tau}{T^2} \leq P(x_\gamma) - P(y_\gamma) \leq L_P d_\M(x_\gamma,y_\gamma) \leq \Lip{P} c \sqrt{\gamma} .
    \end{equation*}
    Passing to the limit as $\gamma \to 0$ leads to a contradiction.  
\end{proof}

We now turn to the existence of the solution. In order to do that,  we will assume a compatibility property between the equation and the boundary conditions in order to construct solutions to \eqref{chap4_eikonal-eq}: \\
\fbox{\parbox{0.975\textwidth}{
\begin{enumerate}[label=({\textbf{H.\arabic*}}),start=13]
    \item There exists $\psi_b \in \LIP{\M}$, with $\psi_b(x)=f_0(x)$ for all $x \in \G$, such that $\psi_b$ is a sub-solution of $\eqref{chap4_eikonal-eq}$ in $\M_T$. \label{chap4_assum:psisssol}
\end{enumerate}}}\\
Assumption \ref{chap4_assum:psisssol} is satisfied, for instance, when $f_0=0$ and $ P \geq 0$ in \eqref{chap4_eikonal-eq}.
This setting corresponds to a time-dependent Eikonal equation whose solution can be interpreted as the minimal amount of time or distance required to travel from a point $x \in \M$ to the front $\Gamma$, where the travel speed is the inverse of $P$. 
For instance, it enables a fast calculation of geodesic distances using the fast marching method \cite{kimmel1998computing,sethian2000fast}.

\begin{rem}\label{chap4_rem:psib}
Assumption \ref{chap4_assum:psisssol} entails in particular that 
\[
\|\grad~ \psi_b(x)\|_x \le \norm{P}_{L^\infty(\M\setminus\Gamma)} ,
\]
and thus the Lipschitz constant $L_{\psi_b}$ satisfies 
\[
L_{\psi_b}\le \norm{P}_{L^\infty(\M\setminus\Gamma)} .
\]
\end{rem}
We then have the following result.

\begin{prop}[Existence and uniqueness for \eqref{chap4_eikonal-eq}]\label{chap4_prop:exis-uniq}
Suppose that assumptions \ref{chap4_assum:M}-\ref{chap4_assum:regulariteOmega} and \ref{chap4_assum:psisssol} hold. Then, problem \eqref{chap4_eikonal-eq} admits a unique viscosity solution $f$. Moreover, there exists a function $\bar{f} \in \LIP{\M_T}$, with a Lipschitz constant depending on $a_0$, $d_0$, $L_{f_0}$ and $\norm{P}_{L^\infty(\M \setminus \G)}$, such that 
\begin{equation}\label{chap4_eq:barrier-f}
    \psi_b \leq f \leq \bar{f} \quad \text{ on } \M_T.
\end{equation}
\end{prop}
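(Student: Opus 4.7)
The plan is to apply Perron's method (Theorem~\ref{chap4_perron}) between a sub-solution and a super-solution of \eqref{chap4_eikonal-eq}, and then to invoke the comparison principle (Proposition~\ref{chap4_Comp-eikonale}) to obtain both the ordering $\psi_b\le f\le \bar f$ and uniqueness. Assumption~\ref{chap4_assum:psisssol} supplies the sub-solution $\psi_b$, whose boundary trace on $\Gamma$ coincides with $f_0$ and which is Lipschitz on $\M$; what remains is to construct a suitable super-solution $\bar f$ matching $f_0$ on the parabolic boundary $\partial\M_T$.

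The construction of $\bar f$ is the main technical step and is the reason why the constants $a_0$ and $d_0$ appear. On the tubular neighborhood $\NeigGama$ supplied by assumption~\ref{chap4_assum:regulariteOmega}, $\tilde d(\cdot,\Gamma)$ is $C^1$ with $\norm{\grad \tilde d(x,\Gamma)}_x \ge d_0$. Setting
\[
L \eqdef \bigl(L_{f_0} + \norm{P}_{L^\infty(\M\setminus\Gamma)}\bigr)/d_0,
\]
I would take $\bar f$ to be the minimum of two natural candidates: a near-$\Gamma$ barrier $\bar f_1(x,t)\eqdef f_0(x)+L\,\tilde d(x,\Gamma)$, and a far-from-$\Gamma$ barrier $\bar f_2(x,t)\eqdef f_0(x)+L a_0 + \norm{P}_{L^\infty(\M\setminus\Gamma)}\,t$. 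Where $\bar f_1$ is $C^1$, the chain rule yields $\norm{\grad \bar f_1}_x \ge Ld_0 - L_{f_0} = \norm{P}_{L^\infty(\M\setminus\Gamma)}\ge P$, so $\bar f_1$ is a classical, hence viscosity, super-solution; $\bar f_2$ is a super-solution because its time derivative alone dominates $P$. The minimum of two viscosity super-solutions is again a viscosity super-solution, so $\bar f \eqdef \min(\bar f_1,\bar f_2)$ does the job. By construction $\bar f = f_0$ on $\Gamma$ and $\bar f(\cdot,0)\ge f_0$ on $\M$, matching the super-solution boundary requirement.

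Lipschitz continuity of $\bar f$ on $\M_T$ with constant controlled by $a_0,d_0,L_{f_0}$ and $\norm{P}_{L^\infty(\M\setminus\Gamma)}$ follows from the explicit formula together with the Lipschitz regularity of $\tilde d(\cdot,\Gamma)$ on the compact manifold $\M$. With $(\psi_b,\bar f)$ as ordered sub/super-solutions (the inequality $\psi_b\le\bar f$ itself is obtained by a direct application of Proposition~\ref{chap4_Comp-eikonale}, since both match $f_0$ on $\partial\M_T$), Perron's method produces a viscosity solution $f$ satisfying $\psi_b\le f\le\bar f$, and Proposition~\ref{chap4_Comp-eikonale} applied to $f$ against any other candidate yields uniqueness.

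The main obstacle I anticipate is precisely the gluing of the near-$\Gamma$ and far-from-$\Gamma$ pieces: since $\grad f_0$ alone need not dominate $P$ outside $\NeigGama$, the time-linear correction in $\bar f_2$ is essential, and one must verify that the global $\bar f$ is still a viscosity super-solution of \eqref{chap4_eikonal-eq} and Lipschitz with the stated dependencies. Writing $\bar f$ as the minimum of two super-solutions makes this clean but relies on the standard stability fact that $\min$ preserves viscosity super-solutions of first-order Hamilton--Jacobi equations, which I would invoke and briefly justify on $\M$ via the definition using $C^1$ test functions.
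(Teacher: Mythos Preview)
Your approach is the same as the paper's---build $\bar f$ as the minimum of a time-linear barrier and a distance-based barrier, then apply Perron's method and the comparison principle---but your specific choice of constants creates a real gap. You justify the super-solution property of $\bar f_1(x,t)=f_0(x)+L\,\tilde d(x,\Gamma)$ only ``where $\bar f_1$ is $C^1$'', i.e.\ on $\NeigGama\setminus\Gamma$, where \ref{chap4_assum:regulariteOmega} supplies both the regularity of $\tilde d(\cdot,\Gamma)$ and the lower bound $\|\grad \tilde d\|\ge d_0$. Outside $\NeigGama$ neither is available, and $\bar f_1$ need not be a viscosity super-solution there (think of a point where $\tilde d(\cdot,\Gamma)$ attains a local maximum). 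Your min-stability argument then breaks down at points $(x_0,t_0)$ with $x_0\notin\NeigGama$ where $\bar f_1(x_0,t_0)<\bar f_2(x_0,t_0)$ strictly---and with your offset $La_0$ in $\bar f_2$ such points do exist: take any $x_0$ with $\tilde d(x_0,\Gamma)$ just above $a_0$ and any $t_0>0$.

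The paper avoids this by dropping the offset (its time-linear barrier is $f_0+K_1 t$ with $K_1=\|P\|_{L^\infty}$) and then taking the distance coefficient $K_2$ large enough---specifically $K_2\ge K_1 T/a_0$ in addition to $K_2\ge (L_{f_0}+\|P\|_{L^\infty})/d_0$---so that for $x\notin\NeigGama$ one always has the distance-based piece dominating, hence $\bar f$ equals the time-linear piece there. That way the distance-based barrier is only ever active inside $\NeigGama$, exactly where \ref{chap4_assum:regulariteOmega} makes it a super-solution. Two further points: since $f_0$ is merely Lipschitz, neither barrier is a classical super-solution, and the paper carries out the verification entirely at the viscosity level via test functions and curves; and your offset $La_0$ also gives $\bar f(\cdot,0)>f_0$ away from $\Gamma$, so your barrier does not pin down the initial trace, whereas the paper's choice yields $\bar f(\cdot,0)=f_0$.
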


In order to give the proof of this proposition, we need to define the notion of barrier solutions and then recall the Perron's method.
\begin{defn}[Barrier sub- and super-solution]\label{chap4_def:barriercauchy}
An usc function $\underline f :\M_T\to \R$ is a barrier sub-solution of \eqref{chap4_eikonal-eq} if it is a viscosity sub-solution
in $(\M \setminus \Gamma) \times ]0,T[$ and if it satisfies moreover
\[
\lim_{y\to x, s\to t} \underline f(y,s)=f_0(x)\quad \forall (x,t)\in \Gamma \times [0,T].
\]
A lsc function $\bar{f} :\M_T\to \R$ is a barrier super-solution of \eqref{chap4_eikonal-eq} if it is a viscosity super-solution in $(\M \setminus \Gamma) \times ]0,T[$ and if it satisfies moreover
\[
\lim_{y\to x, s\to t} \bar{f}(y,s)= f_0(x)\quad \forall (x,t) \in \Gamma \times [0,T].
\]
\end{defn}

\begin{thm}[Perron's  method]\label{chap4_perron}
Assume that there exists a barrier sub-solution $\underline f$ and a barrier super-solution $\bar{f}$ of \eqref{chap4_eikonal-eq}. Then there exists a (possibly discontinuous) viscosity solution $f$ of \eqref{chap4_eikonal-eq} satisfying moreover
\[
\underline f\le f\le \bar{f}\quad {\rm in}\; \M_T.
\]
\end{thm}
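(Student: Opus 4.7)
The plan is to implement the classical Perron method adapted to the manifold setting. Define the set
\[
\mathcal{S} \eqdef \set{v : \M_T \to \R,\; v \text{ usc},\; \underline f \leq v \leq \bar f \text{ on } \M_T,\; v \text{ is a viscosity sub-solution of \eqref{chap4_eikonal-eq} in } \Mt},
\]
and set $f(x,t) \eqdef \sup\set{v(x,t) : v \in \mathcal{S}}$. The set $\mathcal{S}$ is non-empty since $\underline f \in \mathcal{S}$, and $\underline f \leq f \leq \bar f$ on $\M_T$ by construction. The aim is to show that $f^*$ is a viscosity sub-solution and $f_*$ a viscosity super-solution of \eqref{chap4_eikonal-eq} in $\M_T$.

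The first step is the classical \emph{supremum of sub-solutions is a sub-solution} lemma applied to $f^*$. Fix $(x_0,t_0) \in \Mt$ and $\varphi \in C^1$ on $\mathcal{N}^h_{(x_0,t_0)}$ such that $f^* - \varphi$ has a strict local maximum at $(x_0,t_0)$ (without loss of generality, by adding a small quadratic localizer expressed in a chart around $x_0$). By definition of $f^*$, there exist sequences $(x_n,t_n) \to (x_0,t_0)$ and $v_n \in \mathcal{S}$ with $v_n(x_n,t_n) \to f^*(x_0,t_0)$. Since $v_n - \varphi$ is usc on the compact closed neighborhood, it attains a maximum at some $(\tilde x_n, \tilde t_n)$; the strict-maximum property of $f^* - \varphi$ combined with $v_n \leq f^*$ forces $(\tilde x_n, \tilde t_n) \to (x_0,t_0)$ with $v_n(\tilde x_n, \tilde t_n) \to f^*(x_0,t_0)$. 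Applying the sub-solution inequality for $v_n$ at $(\tilde x_n, \tilde t_n)$ and passing to the limit, using continuity of $\partial_t \varphi$, $\grad \varphi$, $P$, and of the Riemannian norm $\|\cdot\|_x$ in $x$, yields the sub-solution inequality at $(x_0,t_0)$.

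The main obstacle is showing that $f_*$ is a super-solution in $\Mt$ via the bump perturbation argument. Suppose by contradiction there exist $(x_0,t_0) \in \Mt$ and a $C^1$ test function $\varphi$ such that $f_* - \varphi$ has a strict local minimum at $(x_0,t_0)$, $f_*(x_0,t_0) = \varphi(x_0,t_0)$, and
\[
\partial_t \varphi(x_0,t_0) + \|\grad \varphi(x_0,t_0)\|_{x_0} - P(x_0) < 0.
\]
By continuity on the manifold (via a chart and \ref{chap4_assum:P}), this strict inequality persists on a parabolic neighborhood $B$ of $(x_0,t_0)$. Choose a smooth bump $\chi \geq 0$ with closure of its support contained in a smaller neighborhood $B' \subset B$ and $\chi(x_0,t_0) > 0$ (constructed in a chart), and set $\psi_\delta \eqdef \varphi + \delta \chi$. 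For $\delta > 0$ small, $\psi_\delta$ is a classical, hence viscosity, sub-solution on $B$. One may assume $f^*(x_0,t_0) < \bar f(x_0,t_0)$, since otherwise $f_*(x_0,t_0) \leq f^*(x_0,t_0) = \bar f(x_0,t_0)$ and the super-solution property of $\bar f$ at that point would already yield the desired inequality, contradicting the hypothesis. Under this strict gap and by continuity, $\psi_\delta \leq \bar f$ on $B'$ for $\delta$ small enough. Define $\tilde v = \max(\psi_\delta, f^*)$ on $B'$ and $\tilde v = f^*$ elsewhere. The strict minimality of $f_* - \varphi$ at $(x_0,t_0)$ implies $f_* \geq \varphi + \rho$ on $B' \setminus B''$ for some smaller neighborhood $B'' \ni (x_0,t_0)$ and some $\rho > 0$; hence $\psi_\delta < f_* \leq f^*$ near $\partial B'$ for small $\delta$, which makes $\tilde v$ usc, bounded between $\underline f$ and $\bar f$, and a viscosity sub-solution on $\Mt$, so $\tilde v \in \mathcal{S}$. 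Taking a sequence $(y_n,s_n) \to (x_0,t_0)$ with $f(y_n,s_n) \to f_*(x_0,t_0)$ gives $\tilde v(y_n,s_n) \geq \psi_\delta(y_n,s_n) \to \varphi(x_0,t_0) + \delta \chi(x_0,t_0) > f_*(x_0,t_0)$, contradicting $\tilde v \leq f$.

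Finally, for the boundary $\partial \M_T = (\Gamma \times \,]0,T[) \cup \M \times \set{0}$: since $\underline f \leq f \leq \bar f$ and both barriers tend to $f_0$ on $\partial \M_T$ by Definition \ref{chap4_def:barriercauchy}, a squeeze argument gives $f^* = f_* = f_0$ on $\partial \M_T$, so $f$ satisfies the boundary condition in the viscosity sense. Combined with the interior properties proved above, this establishes that $f$ is a (possibly discontinuous) viscosity solution of \eqref{chap4_eikonal-eq} with $\underline f \leq f \leq \bar f$ on $\M_T$.
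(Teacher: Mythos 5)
Your overall skeleton is the classical Perron construction (sup of admissible sub-solutions, envelope lemmas, bump perturbation), which is in the spirit of the reference the paper cites for this theorem rather than a different route; the issue is that the key step of your bump argument has a genuine gap. The admissibility of $\tilde v=\max(\psi_\delta,f^*)$ in $\mathcal S$ requires $\tilde v\le \bar f$, and your reduction to the case $f^*(x_0,t_0)<\bar f(x_0,t_0)$ does not deliver this. First, the case exclusion is run at the wrong envelope: your test function $\varphi$ touches $f_*$ from below at $(x_0,t_0)$, so the super-solution property of $\bar f$ can only be invoked if $\bar f-\varphi$ has a local minimum at $(x_0,t_0)$, which requires $\bar f(x_0,t_0)=f_*(x_0,t_0)$; knowing only $f_*(x_0,t_0)\le f^*(x_0,t_0)=\bar f(x_0,t_0)$ gives $(\bar f-\varphi)(x_0,t_0)>0$ with no minimality, so the ``otherwise'' branch does not produce a contradiction. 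The dichotomy must be on $f_*(x_0,t_0)$ versus $\bar f(x_0,t_0)$. Second, even in the favorable case, membership of $\tilde v$ in $\mathcal S$ needs $f^*\le \bar f$ on all of $\M_T$, which you use implicitly but never prove: from $v\le \bar f$ for each $v\in\mathcal S$ one only gets $f\le\bar f$, and since $\bar f$ is merely lower semi-continuous, passing to the usc envelope can produce points where $f^*>\bar f$. (Relatedly, ``by continuity, $\psi_\delta\le\bar f$ on $B'$'' should be argued via lower semi-continuity of $\bar f$ together with $f_*(x_0,t_0)<\bar f(x_0,t_0)$, since $\bar f$ is not continuous.) The standard repairs are either to define the admissible class through usc envelopes (i.e.\ allow locally bounded $v$ with $\underline f\le v\le\bar f$ whose envelope $v^*$ is a sub-solution in $\Mt$) and glue the bump with $f$ itself, so that the pointwise bound $f\le\bar f$ suffices, or to first establish $f^*\le\bar f$ (e.g.\ via the comparison principle of Proposition \ref{chap4_Comp-eikonale}, once the boundary inequalities are checked), which makes $f$ usc and equal to $f^*$. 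As written, the contradiction ``$\tilde v\in\mathcal S$ hence $\tilde v\le f$'' is not justified.

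A secondary, smaller point: your final squeeze asserts that both barriers tend to $f_0$ on all of $\partial\M_T$, but Definition \ref{chap4_def:barriercauchy} only imposes this limit on $\Gamma\times[0,T]$; the behavior on $\M\times\set{0}$ is not covered by that definition, so the claim $f^*=f_*=f_0$ on $\M\times\set{0}$ needs a separate argument (or an explicit barrier condition at $t=0$, which is what is actually verified for the concrete barriers built in the proof of Proposition \ref{chap4_prop:exis-uniq}).
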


The proof of the Perron's method for Hamilton-Jacobi equations defined on manifolds can be found in \cite[Theorem 8.2]{fathi2012weak}. 

\begin{proof}[Proof of Proposition \ref{chap4_prop:exis-uniq}]
By assumption \ref{chap4_assum:psisssol}, $\psi_b$ is a barrier sub-solution of \eqref{chap4_eikonal-eq}. We then have to construct a barrier super-solution $\bar{f}$. Existence will then be a direct consequence of the Perron's method as recalled in Theorem \ref{chap4_perron} while uniqueness and continuity will be direct consequences of the comparison principle shown in Proposition \ref{chap4_Comp-eikonale}.

Let $$
\bar{f}_1(x,t) = f_0(x) + K_1 t,\qandq 
\widebar{f}_2(x,t)= f_0(x)+K_2 \widetilde{d}(x,\Gamma), \quad (x,t) \in \M_T ,
  $$
where $K_1= \norm{P}_{L^\infty(\M \backslash \G)}$, and  $K_2 > 0$ is a large enough constant to be determined later. We set 
\begin{equation}\label{chap4_eq:fmindef}
\widebar{f}(x,t) = \min(\widebar{f}_1(x,t),\widebar{f}_2(x,t))= \min(f_0(x)+K_1t,  f_0(x)+K_2\widetilde{d}(x,\Gamma)).
\end{equation}
We claim that $\widebar{f}$ is a barrier super-solution for $K_2$ well-chosen.

First, observe that 
\[
\Lip{\widebar{f}} \leq \max\bpa{\Lip{\widebar{f}_1},\Lip{\widebar{f}_2}} \leq \Lip{f_0}+\max\pa{\norm{P}_{L^\infty(\M\setminus\Gamma)},K_2},
\]
since $f_0 \in \LIP{\M}$ by \ref{chap4_assum:f_0} and $\Lip{\widetilde{d}(\cdot,\Gamma)}=1$ as $\Gamma \neq \emptyset$. In particular, $\widebar{f}$ is continuous. 

Moreover, we have for $x \in \Gamma$, 
\[
\widebar{f}_2(x,t)= f_0(x) \leq \widebar{f}_1(x,t).
\]
Hence 
\begin{equation}\label{chap4_eq:fminGamma}
\widebar{f}(x,t) =  f_0(x), \quad \forall (x,t) \in \Gamma \times [0,T],
\end{equation}
which shows, via continuity that the limit property required in Definition~\ref{chap4_def:barriercauchy} holds. It remains to prove that $\widebar{f}$ is a super-solution on $(\M\setminus \Gamma)\times ]0,T[$ for $K_2$ large enough. 

Observe first that by taking $K_2 \geq K_1T/a_0$, we have for all $x \in \M \setminus \NeigGama$ (recall that $a_0$ and $\NeigGama$ are defined in assumption \ref{chap4_assum:regulariteOmega}),
\[
\widebar{f}_2(x,t) \ge  f_0(x) + K_2 a_0 \ge f_0(x) + K_1 T  \ge \widebar{f}_1(x,t) ,
\]
and thus \eqref{chap4_eq:fmindef} becomes
\begin{equation}\label{chap4_eq:fcases}
\widebar{f}(x,t) = 
\begin{cases}
\min(\widebar{f}_1(x,t),\widebar{f}_2(x,t)) & \text{ if } (x,t) \in \NeigGama \times [0,T], \\
\widebar{f}_1(x,t) & \text{ if } (x,t) \in \M \setminus \NeigGama \times [0,T] .
\end{cases}
\end{equation}
Following Definition~\ref{chap4_def:visccauchy}, consider $\varphi \in (\M\setminus \Gamma\times ]0,T[)$ such that $\widebar{f}-\varphi$ reaches a local minimum at some point $(x_0,t_0)\in (\M \setminus \Gamma)\times ]0,T[$ and such that $\varphi \in C^1$ on a small neighborhood of $(x_0,t_0)$. This is equivalent to 
\begin{equation}\label{chap4_eq:fineqmin}
\widebar{f}(y,s)-\varphi(y,s) \geq \widebar{f}(x_0,t_0)-\varphi(x_0,t_0) ,
\end{equation}
for all $(y,s) \in (\M\setminus \Gamma)\times ]0,T[$ sufficiently close to $(x_0,t_0)$. We now distinguish two cases.

\begin{enumerate}[label={\bf Case~\arabic*}]
\item~$x_0 \in \M \setminus \NeigGama$. \label{chap4_case:f1}
In this case, since $(\M \setminus \NeigGama) \subset (\M \setminus \Gamma)$, it follows from \eqref{chap4_eq:fcases} and \eqref{chap4_eq:fineqmin} that
\[
\widebar{f}_1(y,s)-\varphi(y,s) \geq \widebar{f}(y,s)-\varphi(y,s) \geq \widebar{f}_1(x_0,t_0)-\varphi(x_0,t_0) ,
\]
for all $(y,s) \in (\M\setminus \Gamma)\times ]0,T[$ sufficiently close to $(x_0,t_0)$. As $]0,T[$ is open, we take $y=x_0$ and $s=t_0+h \in ]0,T[$ for $h>0$ sufficiently small, which gives us
\begin{equation}\label{chap4_eq:phif1ineq}
\varphi(x_0,t_0+h)-\varphi(x_0,t_0) \leq \widebar{f}_1(x_0,t_0+ h)-\widebar{f}_1(x_0,t_0)=K_1h.
\end{equation}
Dividing by $h$ and passing to the limit as $h \to 0^+$, we get 
\begin{equation}\label{chap4_eq:dphif1ubnd}
\ddt\varphi (x_0,t_0) \leq K_1 .
\end{equation}
Embarking from \eqref{chap4_eq:phif1ineq} where we replace $h$ by $-h$ yields 
\begin{equation}\label{chap4_eq:dphif1lbnd}
\ddt\varphi(x_0,t_0) \geq K_1, 
\end{equation}
and thus
\begin{equation}\label{chap4_eq:dphif1}
\ddt\varphi(x_0,t_0)= K_1 .
\end{equation}
We then deduce that
\begin{equation}\label{chap4_eq:superresineq1}
\ddt\varphi(x_0,t_0)+\norma{\grad \varphi(x_0,t_0)}-P(x_0) \ge K_1-P(x_0)\ge K_1 - \norm{P}_{L^\infty(\M\setminus\Gamma)} = 0,
\end{equation}
which shows  the desired inequality in this case.
\item~$x_0 \in \NeigGama \setminus \Gamma$. \label{chap4_case:f2}
Let $I_0 \eqdef \set{i \in \set{1,2}:~ \widebar{f}(x_0,t_0) = \widebar{f}_i(x_0,t_0)}$. Thus, for any $i_0 \in I_0$, we have from \eqref{chap4_eq:fineqmin} that
\begin{equation}\label{chap4_eq:fi0ineqmin}
\widebar{f}_{i_0}(y,s)-\varphi(y,s) \geq \widebar{f}(y,s)-\varphi(y,s) \geq \widebar{f}(x_0,t_0)-\varphi(x_0,t_0) = \widebar{f}_{i_0}(x_0,t_0)-\varphi(x_0,t_0) 
\end{equation}
for all $y \in \NeigGama \setminus \Gamma$ close enough to $x_0$. If $1 \in I_0$ we argue as in \ref{chap4_case:f1} to get a contradiction. It remains to consider the case where $I_0 = \set{2}$. Embarking from \eqref{chap4_eq:fi0ineqmin} with $i_0=2$, arguing as we have done for $\widebar{f}_1$ in \ref{chap4_case:f1} to show \eqref{chap4_eq:dphif1}, and using that $\widebar{f}_2$ is actually $t$-independent, we get in this case that 
\begin{equation}\label{chap4_eq:dphif2}
\ddt\varphi(x_0,t_0)= 0 .
\end{equation}

On the other hand, let $v \in T_{x_0}\M$ where $\norm{v}_{x_0} < 1$ and let $\gamma$ be a differentiable curve in $\M$ such that $\gamma(0)=x_0$ and $\dot{\gamma}(0)=v$. Since $\NeigGama \setminus \Gamma$ is open by \ref{chap4_assum:gamma} and \ref{chap4_assum:regulariteOmega}, we can choose $h$ small enough so that $\gamma(h) \in \NeigGama \setminus \Gamma$ and $\norm{\dot{\gamma}(s)}_{\gamma(s)}\leq 1$ for all $s\in [0,h]$. This is obtained by the continuity of $t \mapsto \norm{\dot{\gamma}(t)}$ and having that $\norm{\dot{\gamma}(0)}_{x_0} < 1$. Thus, inequality \eqref{chap4_eq:fi0ineqmin} with $i_0=2$, $s=t_0$ and $y=\gamma(h)$ becomes
\begin{equation*}
    \begin{aligned}
    \frac{(\varphi(\cdot,t_0)-K_2 \widetilde{d}(\cdot,\Gamma))(\gamma(h))-(\varphi(\cdot,t_0)-K_2 \widetilde{d}(\cdot,\Gamma))(x_0)}{h} &\leq \frac{f_0(\gamma(h))-f_0(x_0)}{h} \\
    &\leq \frac{L_{f_0} d_\M(\gamma(0),\gamma(h))}{h} \\
    &\leq \frac{1}{h} L_{f_0} \int_{0}^{h} \norm{\dot{\gamma}(s)}_{\gamma(s)} ds \\
    &\leq  L_{f_0} .
    \end{aligned}
\end{equation*}
Passing to the limit as $h \to 0^+$ and taking the maximum over $v$, we get
\[
\norm{\grad \varphi(x_0,t_0)-K_2\grad~\widetilde{d}(x_0,\Gamma)}_{x_0} \le \Lip{f_0}.
\]
Combining this inequality with \eqref{chap4_eq:dphif2} and \ref{chap4_assum:regulariteOmega}, we get
\begin{align}\label{chap4_eq:superresineq2b}
&\ddt\varphi(x_0,t_0)+\norm{\grad \varphi (x_0,t_0)}_{x_0}-P(x_0) \\
&\ge  K_2 \norm{\grad~\widetilde{d}(x_0,\Gamma)}_{x_0} - \norm{ \grad \varphi(x_0,t_0)-K_2\grad~\widetilde{d}(x_0,\Gamma)}_{x_0} - P(x_0) \nonumber\\
&\ge  K_2d_0-\Lip{f_0}-\norm{P}_{L^\infty(\M\setminus\Gamma)} \ge 0, \nonumber
\end{align}
for $K_2 \ge( \Lip{f_0}+\norm{P}_{L^\infty(\M\setminus\Gamma)})/d_0$.
\end{enumerate}

In summary, taking $K_2 \geq \max\pa{(\Lip{f_0}+\norm{P}_{L^\infty(\M\setminus\Gamma)})/d_0,K_1T/a_0}$, inequalities \eqref{chap4_eq:superresineq1} and \eqref{chap4_eq:superresineq2b} hold in each respective case, and thus the desired super-solution inequality is satisfied in all cases. We then conclude that $\widebar{f}$ is a barrier super-solution. The existence of $f$ and the bound \eqref{chap4_eq:barrier-f} are then direct consequences of Perron's method.

\end{proof}

We will now provide regularity results for the solution of \eqref{chap4_eikonal-eq}. For the space regularity, we will need to use the regularity of the Riemannian distance.

\begin{thm}[Regularity of the solution of \eqref{chap4_eikonal-eq}] \label{chap4_lip-viscosity}
Suppose that assumptions \ref{chap4_assum:M}-\ref{chap4_assum:dis-reg} and \ref{chap4_assum:psisssol} hold. Then the viscosity solution to the problem  \eqref{chap4_eikonal-eq} satisfies the following regularity properties 
\begin{equation}
    f(x,\cdot) \in \LIP{[0,T[} \qwithq \Lip{f(x,\cdot)} \leq \Lip{f_0} +  \norm{P}_{L^\infty(\M \setminus \G)}, \forall x \in \M. \label{chap4_lip-t-ct-1} 
\end{equation}
For all $x$, $y$ such that $d_\M(x,y)\leq \delta$ (with $\delta$ defined in assumption \ref{chap4_assum:dis-reg}), we have
\begin{equation}
     \abs{f(x,t)-f(y,t)}\leq K d_\M(x,y) \qwithq  K = \Lip{f_0} +  2\norm{P}_{L^\infty(\M \setminus \G)}, \forall t \in [0,T], \label{chap4_eq:lipspace} 
\end{equation}
\end{thm}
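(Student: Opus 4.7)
The plan is to prove both regularity statements via the comparison principle (Proposition \ref{chap4_Comp-eikonale}), applied to carefully chosen sub- and super-solutions.

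For the time regularity \eqref{chap4_lip-t-ct-1}, I would first bound $f(x,h) - f_0(x)$ two-sidedly. The upper bound $f(x,h) \leq f_0(x) + \norm{P}_{L^\infty(\M \setminus \G)} h$ is immediate from $f \leq \widebar{f}$ (Proposition \ref{chap4_prop:exis-uniq}), since by construction $\widebar{f}_1(x,h) = f_0(x) + K_1 h$ with $K_1 = \norm{P}_{L^\infty(\M \setminus \G)}$. For the lower bound, I would introduce $w(x,t) \eqdef f_0(x) - \Lip{f_0} t$ and show it is a viscosity sub-solution of \eqref{chap4_eikonal-eq}. Given a $C^1$ test $\varphi$ with $w - \varphi$ attaining a local max at $(x_0,t_0) \in (\M \setminus \G) \times {]0,T[}$, specializing the max inequality to $x = x_0$ forces $\partial_t \varphi(x_0,t_0) = -\Lip{f_0}$, while specializing to $t = t_0$ and testing along unit-speed geodesics through $x_0$ combined with the Lipschitz property of $f_0$ yields $\norm{\grad \varphi(x_0,t_0)}_{x_0} \leq \Lip{f_0}$; since $P \geq 0$, the sub-solution inequality follows. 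By comparison, $w \leq f$, giving $f(x,h) \geq f_0(x) - \Lip{f_0} h$. I would then lift these initial-time estimates to arbitrary $t$ by applying the comparison principle to $u(x,t) \eqdef f(x,t+h)$ against $v_{\pm}(x,t) \eqdef f(x,t) \pm C_{\pm} h$ with $C_+ = \norm{P}_{L^\infty(\M \setminus \G)}$ and $C_- = \Lip{f_0}$: both solve the PDE on $(\M \setminus \G) \times {]0,T-h[}$ and the parabolic-boundary inequalities are precisely those just established.

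For the space regularity \eqref{chap4_eq:lipspace}, I would exploit the time bound together with the PDE in viscosity form. The key step is to show that $f(\cdot,t_0)$ is a viscosity sub-solution of $\norm{\grad u}_x \leq K$ on a geodesic ball $B_\M(x_0,\delta)$ with $\delta$ from Assumption \ref{chap4_assum:dis-reg}. Given a $C^1$ spatial test $\psi$ such that $f(\cdot,t_0) - \psi$ has a strict local max at $x_0$, I would lift it to $\varphi_\epsilon(x,t) \eqdef \psi(x) + (t-t_0)^2/(2\epsilon)$ and let $(x_\epsilon,t_\epsilon)$ be a local max of $f - \varphi_\epsilon$; by a standard compactness argument, $(x_\epsilon,t_\epsilon) \to (x_0,t_0)$ as $\epsilon \to 0$. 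The sub-solution condition gives $(t_\epsilon-t_0)/\epsilon + \norm{\grad \psi(x_\epsilon)}_{x_\epsilon} \leq P(x_\epsilon)$. The max inequality $(f - \varphi_\epsilon)(x_\epsilon,t_\epsilon) \geq (f - \varphi_\epsilon)(x_\epsilon,t_0)$ translates to $f(x_\epsilon,t_\epsilon) - f(x_\epsilon,t_0) \geq (t_\epsilon-t_0)^2/(2\epsilon)$, and combined with the one-sided time Lipschitz bounds, yields an upper bound on $-(t_\epsilon - t_0)/\epsilon$ of the right order. Passing to the limit gives $\norm{\grad \psi(x_0)}_{x_0} \leq K$. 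Finally, a continuous viscosity sub-solution of $\norm{\grad u}_x \leq K$ on a geodesically convex ball is $K$-Lipschitz on that ball (integrating along the unique minimizing geodesic between any two of its points), which yields \eqref{chap4_eq:lipspace}.

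The main obstacles are two-fold. First, $f_0$ is only Lipschitz, so one cannot take $\grad f_0$ pointwise; the bound $\norm{\grad \varphi}_{x_0} \leq \Lip{f_0}$ in the sub-solution verification for $w$ must be read off from the local-max condition tested along unit-speed geodesics, i.e., purely from the Lipschitz estimate for $f_0$. Second, the test functions that would naturally carry the estimates (e.g., $d_\M^2(\cdot,y)/2\gamma$ in doubling-of-variables, or $d_\M(\cdot,y_0)$) are only smooth off the cut locus; Assumption \ref{chap4_assum:dis-reg} via Lemma \ref{chap4_lem:squared-distance} and Remark \ref{chap4_rem:sq-dis-diff} ensures the needed local regularity in a $\delta$-neighborhood, which is precisely why the spatial estimate \eqref{chap4_eq:lipspace} is restricted to pairs with $d_\M(x,y) \leq \delta$.
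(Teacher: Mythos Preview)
Your time-regularity argument is essentially the paper's: it also builds the sub-/super-solutions $f_0(x)\mp Lt$ (with a single $L=\Lip{f_0}+\norm{P}_{L^\infty(\M\setminus\Gamma)}$ rather than your asymmetric $C_\pm$), applies comparison to get $|f(x,h)-f_0(x)|\le Lh$, and then shifts in time exactly as you do.

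For the space estimate \eqref{chap4_eq:lipspace}, however, the paper takes a different route. It argues by contradiction via the doubled test-function
\[
\Psi_\alpha(x,t,y,s)=f(x,t)-f(y,s)-K\,d_\M(x,y)-\frac{|t-s|^2}{2\alpha},
\]
and at the maximum $(x_\alpha,t_\alpha,y_\alpha,s_\alpha)$ uses only the \emph{sub}-solution inequality for $f$ at $(x_\alpha,t_\alpha)$ (with $d_\M(\cdot,y_\alpha)$ smooth there by \ref{chap4_assum:dis-reg} and Lemma~\ref{chap4_lem:grad-dist}), together with the already-proved time-Lipschitz bound to control $|t_\alpha-s_\alpha|/\alpha$; the case $x_\alpha\in\Gamma$ is disposed of explicitly via the barrier $\psi_b$ from \ref{chap4_assum:psisssol}. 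Your ``freeze $t_0$, show $f(\cdot,t_0)$ is a viscosity sub-solution of $\norm{\grad u}_x\le K$, then integrate along geodesics'' is a legitimate and more modular alternative, but two points in your sketch need to be filled in. First, the lifted test $\varphi_\epsilon$ only yields the gradient bound at points of $\M\setminus\Gamma$; at $x_0\in\Gamma$ you must argue separately, e.g.\ from $f(\cdot,t_0)\ge\psi_b$ with equality at $x_0$, so that any $\psi$ touching $f(\cdot,t_0)$ from above there also touches $\psi_b$, giving $\norm{\grad\psi(x_0)}_{x_0}\le L_{\psi_b}\le\norm{P}_{L^\infty(\M\setminus\Gamma)}$ (Remark~\ref{chap4_rem:psib}). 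Second, the implication ``continuous viscosity sub-solution of $\norm{\grad u}\le K$ on a geodesic ball $\Rightarrow$ $K$-Lipschitz there'' is not a one-liner on a manifold: testing against $K\,d_\M(\cdot,y)$ gives equality, not a contradiction, so one needs either a $(K+\epsilon)$-perturbation argument with control of the boundary, or a sup-convolution regularization. The paper's doubling-of-variables computation avoids both issues at once, at the cost of a slightly longer but fully self-contained argument.
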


\begin{proof}
If $x \in \G$, \eqref{chap4_lip-t-ct-1} obviously holds. We then consider the case $(x,t) \in \Mt$, and we first show that for any $t\in [0,T[$,
\begin{equation}\label{chap4_liptcf}
\abs{f(x,t)-f(x,0)} \leq Lt,
\end{equation}
where $L=\norm{P}_{L^\infty(\M \setminus \G)} + L_{f_0}$. We define for $(x,t) \in \M_T$,
$$f_1(x,t)=f_0(x)-Lt \qandq f_2(x,t)=f_0(x)+Lt.$$
 We claim that $f_1$ (resp. $f_2$) is a sub-solution (resp. super-solution) of \eqref{chap4_eikonal-eq}. We have $f_1 \leq f_0$ on $\partial \M_T $. 
Now define $\varphi \in \Mt$ such that $f_1- \varphi $ reaches a local maximum at some $(x_0,t_0) \in \Mt$ and such that $\varphi \in C^1$ on a small neighborhood of $(x_0,t_0)$. This is equivalent to 
\begin{equation} \label{chap4_liptcphi}
   \varphi(x_0,t_0) - \varphi(x,t) \leq f_1(x_0,t_0) - f_1(x,t)
\end{equation}
for all $(x,t) \in \Mt$ sufficiently close to $(x_0,t_0)$. Since $]0,T[$ is open, we take $x=x_0$ and $t=t_0-h \in ]0,T[$ for $h>0$ sufficiently small, and we get 
\begin{equation}
    \varphi(x_0,t_0) - \varphi(x_0,t_0-h) \leq f_1(x_0,t_0) - f_1(x_0,t_0-h)=-Lh
\end{equation}
Dividing by $h$ and passing to the limit, we get 
\begin{equation}
    \ddt \varphi(x_0,t_0) \leq -L.
\end{equation}

On the other hand, let $v \in T_{x_0}\M$ with $\norm{v}_{x_0} < 1$ and let $\gamma : ]-1,1[ \to \M$ be a differentiable curve in $\M$ such that $\gamma(0)=x_0$ and $\dot{\gamma}(0)=v$. Since $\M \setminus \Gamma$ is open, we can choose $h$ small enough so that $\gamma(-h) \in M \setminus \Gamma$ and $\norm{\dot{\gamma}(s)}_{\gamma(s)}\leq 1$ for all $s\in [-h,0]$. This is obtained thanks to the continuity of $t \mapsto \norm{\dot{\gamma}(t)}$ and having that $\norm{\dot{\gamma}(0)}_{x_0} < 1$. By \eqref{chap4_liptcphi}, taking $t=t_0$ and $x=\gamma(-h)$, we have

\begin{equation*}
    \begin{aligned}
    \frac{\varphi (\g (0),t_0) -\varphi ( \g (-h),t_0)}{h} &\leq \frac{f_0(x_0)-f_0(\gamma(-h))}{h} \\
    &\leq L_{f_0}\frac{d_{\M}(\gamma(0),\gamma(-h))}{h} \\
    &\leq \frac{1}{h} L_{f_0} \int_{-h}^{0} \norm{\dot{\gamma}(s)}_{\gamma(s)} ds \\
    &\leq  L_{f_0} .
    \end{aligned}
\end{equation*}
Passing to the limit as $h \to 0$ and taking the maximum over $v$, we get 
\begin{equation*}
    \norm{\grad \varphi (x_0,t_0)}_{x_0} \leq L_{f_0}.
 \end{equation*}
We then deduce that 
\begin{equation}
\begin{aligned}
    \ddt \varphi(x_0,t_0) + \norm{\grad \varphi (x_0,t_0)}_{x_0} - P(x_0)
    &\leq -L + \Lip{f_0} - P(x_0)\\
    &\leq -L+  \Lip{f_0}+ \norm{P}_{L^\infty(\M \setminus \G)}.\\
    &=0.
\end{aligned}
\end{equation}

Therefore, this shows our claim on $f_1$. Arguing in the same way, we can prove that $f_2$ is a super-solution of \eqref{chap4_eikonal-eq}. Applying the comparison principle Proposition \ref{chap4_Comp-eikonale} twice yields that for any $(x,t) \in \M \times [0,T[$,
\begin{equation*}
    f_0(x)-Lt \leq f(x,t) \leq f_0(x)+Lt,
\end{equation*}
which shows \eqref{chap4_liptcf}.
For $h>0$ sufficiently small, we then consider the function $l(x,t)=f(x,t+h)$ for all $(x,t) \in \M_T$. Then it is easy to verify that $l$ satisfies \eqref{chap4_eikonal-eq}.
This implies that $f(x,t)$ and $f(x,t+h)$ are solutions of the same equation \eqref{chap4_eikonal-eq}, with initial conditions respectively $f_0(x)$ and $f(x,h)$. Applying again the comparison result (see Proposition \ref{chap4_Comp-eikonale}) and using \eqref{chap4_liptcf}, we obtain for any $(x,t)\in \M \times [0,T[$,
that
\begin{equation} \label{chap4_liptc}
\begin{aligned}
   \abs {f(x,t+h)-f(x,t)} &\leq \abs{f(x,h)-f(x,0)} \\
   &\leq Lh. 
    \end{aligned}
\end{equation}
Passing to the limit as $h \to 0$ shows the time regularity claim.\medskip
 
 We now turn to the space regularity bound \eqref{chap4_eq:lipspace} and adapt the argument of \cite[Theorem~8.2]{barles2011first}. Let $\delta' < \delta$ and $x,y \in \M$ such that $d_\M(x,y)\leq \delta'$. We introduce the test-function
 \begin{equation*}
     \Psi : (x,t,y) \in \M_T \times \M \mapsto f(x,t)-f(y,t)-K d_\M(x,y),
 \end{equation*}
  and we aim to show that this function is non-positive for every $K> \norm{P}_{L^\infty(\M\setminus\Gamma)}$ + L.
 We argue by contradiction and assume that 
$$\sup_{(x,t,y) \in \M_T \times \M} \Psi(x,t,y) >0.$$
Since $\Psi$ is continuous over $\M_T \times \M$ which is compact, the supremum is actually a maximum attained at some point $(\bar{x},\bar{t},\bar{y}) \in \M_T\times \M$ with $\bar{x} \neq \bar{y}$ (otherwise $\Psi(\bar{x},\bar{t},\bar{y})=0$).
 In order to use viscosity solutions arguments, we introduce the function, for $\a>0$, 
 \begin{equation*}
     \Psi_{\a}:(x,t,y,s) \in \M_T^2 \mapsto f(x,t)-f(y,s)-Kd_\M(x,y) - \frac{\abs{ t-s}^2}{2\a}.
 \end{equation*}
  Since $f$ is continuous and $\M_T$ is compact, the supremum of $\Psi_\a$ is actually a maximum attained at some point $(x_\a,t_\a,y_\a, s_\a) \in \M_T ^2$.
In particular, we have \begin{equation}\label{chap4_reg_cont_max_pos}
    \Psi_\a(x_\a,t_\a,y_\a, s_\a) \geq \Psi_\a(\bar{x},\bar{t},\bar{y},\bar{t})=\Psi(\bar{x},\bar{t},\bar{y}) > 0.
\end{equation}
Observe also that for $\alpha$ sufficiently small, we cannot have $x_\a=y_\a$ as otherwise $\Psi_\a(x_\a,t_\a,y_\a, s_\a)$ would be negative, hence contradicting \eqref{chap4_reg_cont_max_pos}.

If $x_\a \in \G$, then $f(x_\a,t_\a)=f_0(x_\a)=\psi_b(x_\a)$. Moreover by Proposition \ref{chap4_prop:exis-uniq}, $\psi_b(y_\a) \leq f(y_\a,s_\a)$. It then follows that 
\begin{align*}
\Psi_\a(x_\a,t_\a,y_\a,s_\a) \le & \psi_b(x_\a)-f(y_\a,s_\a)-Kd_\M(x_\a,y_\a)\\
\le& \psi_b(x_\a)-\psi_b(y_\a)-Kd_\M(x_\a,y_\a)\\
\le& (L_{\psi_b}-K)d_\M(x_\a,y_\a)\\
\le& (\norm{P}_{L^\infty(\M\setminus\Gamma)}-K)d_\M(x_\a,y_\a) 
\end{align*}
where we used Remark~\ref{chap4_rem:psib}. Since $K \geq \norm{P}_{L^\infty(\M\setminus\Gamma)}$ we get a contradiction with the positivity of $\Psi_\a(x_\a,t_\a,y_\a,s_\a)$.

 Consider in the rest the case $x_\a\in \M \setminus \Gamma$. Classical arguments  (see, e.g., \cite[Lemma~5.2]{barles2011first}) show that $x_\alpha \to x$ and $y_\alpha \to y$. Since $d_\M(x,y)\leq \delta'$, we can assume that $d_\M(x_\alpha,y_\alpha)\leq \delta$, for $\alpha$ small enough. Therefore, \ref{chap4_assum:dis-reg} implies that $x_\alpha \notin \cut(y_\alpha)$. Moreover, $x_\a \neq y_\a$, and it follows that the function $(x,t) \mapsto f(y_\a,s_\a)+Kd_\M(x,y_\a)+\frac{|t-s_\a|^2}{2\a}$ is smooth at $(x_\a,t_\a)$. Since $f$ is a sub-solution we have
\begin{equation}\label{chap4_lip-ct-local}
\frac{t_\a-s_\a}\a + K \le P(x_\a) ,
\end{equation}
where we used Lemma~\ref{chap4_lem:grad-dist} to assert that $\norm{\grad_y d_\M(x_\a,y_\a)}_{y_\a}=1$. 

On the other hand, since $(x_\a,t_\a,y_\a,s_\a)$ is a maximum point of $\psi_\a$, we have for any $t \in [0,T]$
\begin{equation*}
f(x_\a,t) - \frac{\abs{ t-s_\a}^2}{2\a} \leq f(x_\a,t_\a) - \frac{\abs{ t_\a-s_\a}^2}{2\a}.
\end{equation*}
Choosing $t$ such that $t_\a-s_\a$ and $t_\a-t$ are of the same sign, and using \eqref{chap4_lip-t-ct-1}, we get
\begin{align*}
L|t-t_\a|\ge & f(x_\a,t_\a)-f(x_\a,t)\\
\ge & \frac{|t_\a-s_\a|^2}{2\a}-\frac{|t-s_\a|^2}{2\a},
\end{align*}
now using the polarization identity ($\norm{X}^2+\norm{Y}^2-2X.Y=\norm{X-Y}^2$), we obtain 
$$L|t-t_\a|\ge-\frac{|t-t_\a|^2}{2\a}+|t-t_\a|\frac{|t_\a-s_\a|}\a.$$
Dividing by $|t-t_\a|$ and taking $t\to t_\a$, we get
\[
\frac{|t_\a-s_\a|}{\a} \leq L .
\]
Injecting this estimate in \eqref{chap4_lip-ct-local}, we arrive to
\[
K \leq \norm{P}_{L^\infty(\M\setminus\Gamma)}+L .
\]
Since $K > \norm{P}_{L^\infty(\M\setminus\Gamma)}+L$, we get again a contradiction of the positivity of $\Psi_\a(x_\a,t_\a,y_\a,s_\a)$ on $\M \setminus \Gamma$. The above proof shows then that 
\[
f(x,t)-f(y,t)-Kd_\M(x,y) \leq 0
\] 
for all $(x,y,t) \in \M^2 \times [0,T]$ and every $K>2\norm{P}_{L^\infty(\M\setminus\Gamma)}+ \Lip{f_0}$, i.e., $f(\cdot,t)$ is globally Lipschitz continuous uniformly in $t$, hence providing the bound \eqref{chap4_eq:lipspace} for $d_\M(x,y) \leq \delta'$. Taking $\delta' \to \delta$ and $K \to 2\norm{P}_{L^\infty(\M\setminus\Gamma)}+ \Lip{f_0}$, we get the desired result.

 \end{proof}

\subsection{Problem \eqref{chap4_eikonal-eq-discrete}}\label{chap4_subsec:exist-reg-nonlocal}

We begin by the definition of viscosity solution to problem \eqref{chap4_eikonal-eq-discrete}

\begin{defn}[Viscosity solution for \eqref{chap4_eikonal-eq-discrete}]
An usc function $f^{\e}:\widetilde{\M}_T \To \R$ is a viscosity sub-solution to \eqref{chap4_eikonal-eq-discrete} in $\tMt$ if for every $C^1$ function $\varphi:]0,T[ \To \R$ and every point $(x_0,t_0) \in \tMt$ such that $f^\e(x_0,.)-\varphi$ has a local maximum point at $t_0 \in ]0,T[$, we have 
$$\ddt \varphi(t_0) +  \abs{\nabla ^{-}_{\eta_\e}f^\e(x_0,t_0)}_\infty \leq \widetilde{P}(x_0).$$
The function $f^\e$ is a viscosity sub-solution to \eqref{chap4_eikonal-eq-discrete} in $\widetilde{\M}_T$ if it satisfies moreover $f^\e(x,t) \leq f^\e_0(x)$ for all $(x,t) \in \partial \widetilde{\M}_T.$

A lsc function $f^\e:\widetilde{\M}_T \To \R$ is a viscosity super-solution to $\eqref{chap4_eikonal-eq-discrete}$ in $\tMt$ if for every $C^1$ function $\varphi:]0,T[ \To \R$ and every point $(x_0,t_0) \in \tMt$ such that $f^\e(x_0,.)-\varphi$ has a local minimum point at $t_0$, we have 
$$\ddt \varphi(t_0) +  \abs{\nabla ^{-}_{\eta_\e}f^\e(x_0,t_0)}_\infty \geq \widetilde{P}(x_0).$$
The function $f^\e$ is a viscosity super-solution to \eqref{chap4_eikonal-eq-discrete} in $\widetilde{\M}_T$ if it satisfies moreover $f^\e(x,t) \geq f^\e_0(x)$ for all $(x,t) \in \partial \widetilde{\M}_T.$

Finally, a locally bounded function $f^\e:\widetilde{\M}_T \to \R$ is a viscosity solution to $\eqref{chap4_eikonal-eq-discrete}$ in $\widetilde{\M}_T$ (resp. in $\tMt$) if $(f^\e)^*$ is a viscosity sub-solution and $(f^\e)_*$ is a viscosity super-solution to $\eqref{chap4_eikonal-eq-discrete}$ in $\widetilde{\M}_T$ (resp. in $\tMt$).

\end{defn}

\begin{prop}[Comparison principle for \eqref{chap4_eikonal-eq-discrete}] \label{chap4_comp-discrete}
Suppose that assumptions \ref{chap4_assum:M}-\ref{chap4_assum:gamma} and \ref{chap4_etapos} hold. Assume that $f^\e$ is a bounded viscosity sub-solution to \eqref{chap4_eikonal-eq-discrete} and $g^\e$ is a viscosity super-solution to \eqref{chap4_eikonal-eq-discrete}. Then 
$$ f^\e \leq g^\e \quad \text{ in } \widetilde{\M}_T.$$
\end{prop}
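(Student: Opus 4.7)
The plan is to argue by contradiction using the classical doubling-of-variables technique, as in the proof of Proposition~\ref{chap4_Comp-eikonale}, but exploiting the key simplification that $\abs{\nabla^-_{\eta_\e} \cdot}_\infty$ is here a zeroth-order non-local operator in space, so the test functions in the definition of viscosity solution depend only on $t$. Consequently, one only needs to duplicate the \emph{time} variable. Concretely, assuming for contradiction that $\sup_{\widetilde{\M}_T}(f^\e-g^\e)>0$, introduce, for small $\tau>0$, the function $\Psi_\tau(x,t)=f^\e(x,t)-g^\e(x,t)-\tfrac{\tau}{T-t}$ and its penalized version $\Psi_{\tau,\gamma}(x,t,s)=f^\e(x,t)-g^\e(x,s)-\tfrac{|t-s|^2}{2\gamma}-\tfrac{\tau}{T-t}$. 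Upper semi-continuity of $f^\e$ and lower semi-continuity of $g^\e$, combined with compactness of $\widetilde{\M}_T$, ensure that their suprema are attained at some $(x_\tau,t_\tau)$ and $(x_\gamma,t_\gamma,s_\gamma)$ respectively, both strictly positive for $\tau$ small. The standard penalty estimate gives $|t_\gamma-s_\gamma|\le c\sqrt{\gamma}$ (with $c$ depending on the $L^\infty$ bound of $f^\e$ and the infimum of $g^\e$ over the compact set $\widetilde{\M}_T$), and the classical lemma of Barles recalled in Proposition~\ref{chap4_Comp-eikonale} yields, up to a subsequence, $(x_\gamma,t_\gamma,s_\gamma)\to(\bar x,\bar t,\bar t)$ with $\Psi_\tau(\bar x,\bar t)=M_\tau>0$.

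The next step is to rule out the boundary cases. If $\bar t=0$, the boundary condition $f^\e(\bar x,0)\le f_0^\e(\bar x)\le g^\e(\bar x,0)$ forces $M_\tau\le -\tau/T<0$, a contradiction, so $\bar t>0$ and $t_\gamma,s_\gamma\in\,]0,T[$ for $\gamma$ small. Similarly, if $x_\gamma\in\widetilde{\G}$, then $f^\e(x_\gamma,t_\gamma)\le f_0^\e(x_\gamma)\le g^\e(x_\gamma,s_\gamma)$ gives $\Psi_{\tau,\gamma}(x_\gamma,t_\gamma,s_\gamma)\le -\tfrac{\tau}{T-t_\gamma}<0$, again a contradiction. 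So one may assume $(x_\gamma,t_\gamma,s_\gamma)\in(\widetilde{\M}\setminus\widetilde{\G})\times\,]0,T[^2$.

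Now I apply the viscosity inequalities with the purely-in-time test functions $\varphi(t)=\tfrac{|t-s_\gamma|^2}{2\gamma}+\tfrac{\tau}{T-t}$ (for which $f^\e(x_\gamma,\cdot)-\varphi$ has a local maximum at $t_\gamma$) and $\tilde\psi(s)=-\tfrac{|t_\gamma-s|^2}{2\gamma}$ (for which $g^\e(x_\gamma,\cdot)-\tilde\psi$ has a local minimum at $s_\gamma$). Subtracting the sub- and super-solution inequalities and cancelling the $\widetilde{P}(x_\gamma)$ terms produces
\[
\frac{\tau}{(T-t_\gamma)^2} \le \abs{\nabla^-_{\eta_\e}g^\e(x_\gamma,s_\gamma)}_\infty - \abs{\nabla^-_{\eta_\e}f^\e(x_\gamma,t_\gamma)}_\infty.
\]

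The main, and really the only delicate, step is to show that the right-hand side is $\le 0$, which gives $\tau/(T-t_\gamma)^2\le 0$ and the sought contradiction. For this I exploit the \emph{spatial} maximality of $\Psi_{\tau,\gamma}$: fixing $t=t_\gamma$, $s=s_\gamma$ and letting $x$ vary over $\widetilde{\M}$, one gets $f^\e(x_\gamma,t_\gamma)-f^\e(y,t_\gamma)\ge g^\e(x_\gamma,s_\gamma)-g^\e(y,s_\gamma)$ for every $y\in\widetilde{\M}$. Multiplying by the non-negative factor $C_\eta^{-1}\eta_\e(\widetilde{d}(x_\gamma,y))$ (using \ref{chap4_etapos}) and taking the supremum over $y$ via the defining formula \eqref{chap4_gradientdiscret} immediately yields $\abs{\nabla^-_{\eta_\e}f^\e(x_\gamma,t_\gamma)}_\infty\ge \abs{\nabla^-_{\eta_\e}g^\e(x_\gamma,s_\gamma)}_\infty$, closing the argument. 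The point is that the spatial max inherent in the non-local operator interacts perfectly with the spatial max produced by the doubling procedure, which is precisely why duplicating the space variable is unnecessary here, a substantial simplification compared to the local situation of Proposition~\ref{chap4_Comp-eikonale}.
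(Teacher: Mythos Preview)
Your proof is correct and follows essentially the same approach as the paper. The paper itself omits the proof, referring to \cite[Theorem~2.10]{fadili2023limits} with the remark that ``problem \eqref{chap4_eikonal-eq-discrete} doesn't see the space differential of the solution''; this is precisely the observation you exploit---doubling only the time variable and then using the spatial maximality of $\Psi_{\tau,\gamma}$ together with the non-negativity of $\eta$ \ref{chap4_etapos} to compare the non-local operators directly.
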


\begin{proof} 
Since problem \eqref{chap4_eikonal-eq-discrete} doesn't see the space differential of the solution, the proof is the same as the one of \cite[Theorem 2.10]{fadili2023limits} and we skip it.

\end{proof}

In the same vein as for problem \eqref{chap4_eikonal-eq}, the following assumption is intended to impose compatibility properties between \eqref{chap4_eikonal-eq-discrete} and the boundary conditions on $\partial\widetilde\M_T$:\\
\fbox{\parbox{0.975\textwidth}{
\begin{enumerate}[label=({\textbf{H.\arabic*}}),start=14]
\item There exists $\widetilde{\psi}_b\in \LIP{\M}$, with $\widetilde{\psi}_b(x)= f^\e_0(x)$ for all $x\in \widetilde{\Gamma}$, such that $\widetilde{\psi}_b$ is a sub-solution of \eqref{chap4_eikonal-eq-discrete} in $\widetilde \M_T$. \label{chap4_assum:psisssol-J}
\end{enumerate}}}\\
\begin{rem}\label{chap4_rem:psib-J}
Likewise, assumption \ref{chap4_assum:psisssol-J} entails that the Lipschitz constant $L_{\widetilde{\psi}_b}$ satisfies 
\[
L_{\widetilde{\psi}_b}\le \norm{\widetilde{P}}_{L^\infty(\widetilde{\M}\setminus\widetilde{\Gamma})} .
\]
\end{rem}
\begin{rem}
   Referring to Remark \ref{chap4_rem:psib}, we can find a discussion concerning this assumption, which is similar to the one made in the local case. Specifically, when $f_0^\e=0$ and $\Tilde{P} \geq 0$, \ref{chap4_assum:psisssol-J} holds. This example, when applied to weighted graphs (see Section \ref{chap4_sec:eikconvgraphs}), corresponds to the computation of distances on data that can be represented as a weighted graph such that point clouds, discrete images and meshes. For further details, please refer to \cite{desquesnes2017nonmonotonic,toutain2016non} and references therein. 
\end{rem}

We are now ready to provide an existence result. As for the local case, the proof is based on Perron's method and the construction of barriers.
\begin{prop}[Existence result for \eqref{chap4_eikonal-eq-discrete}]\label{chap4_prop:existence-J}
Suppose that assumptions~\ref{chap4_assum:M}--\ref{chap4_assum:f_0}, \ref{chap4_etapos}--\ref{chap4_eta:dec} and \ref{chap4_assum:psisssol-J} hold. Then, problem \eqref{chap4_eikonal-eq-discrete} admits a unique viscosity solution $f^\e$ (which is in fact continuous). Moreover, there exists a function $\widebar{f}^\e \in \LIP{\widetilde{\M}}$ such that
\begin{equation}\label{chap4_eq:barrier-f-J}
\widetilde \psi_b\le f^\e\le \widebar{f}^\e\quad {\rm in }\;\widetilde \M_T.
\end{equation}
\end{prop}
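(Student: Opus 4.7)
The plan is to mirror the proof of Proposition~\ref{chap4_prop:exis-uniq}: construct a barrier sub-solution and a barrier super-solution, apply the non-local analogue of Perron's method (Theorem~\ref{chap4_perron}) to obtain a (possibly discontinuous) viscosity solution trapped between the two barriers, and then invoke the comparison principle Proposition~\ref{chap4_comp-discrete} to upgrade this to uniqueness and continuity. Assumption~\ref{chap4_assum:psisssol-J} directly supplies the barrier sub-solution $\widetilde{\psi}_b$; since $\widetilde{\psi}_b$ is a sub-solution in $\widetilde{\M}_T$, it satisfies $\widetilde{\psi}_b \le f^\e_0$ on $\widetilde{\M}$ and $\widetilde{\psi}_b = f^\e_0$ on $\widetilde{\Gamma}$, and the limit condition at the boundary is automatic because $\widetilde{\psi}_b$ is Lipschitz and $t$-independent.

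For the barrier super-solution I would follow the local construction and set
\[
\widebar{f}^\e(x,t) = \min\bpa{f^\e_0(x) + K_1 t,\; f^\e_0(x) + K_2\, d_\M(x,\widetilde{\Gamma})}
\]
for positive constants $K_1,K_2$ to be tuned. Its Lipschitz regularity follows from~\ref{chap4_assum:f_0} and the $1$-Lipschitz property of $d_\M(\cdot,\widetilde{\Gamma})$, and the identity $\widebar{f}^\e = f^\e_0$ on $\widetilde{\Gamma}\times[0,T]$ is immediate. To check the super-solution inequality at a local minimum $(x_0,t_0)\in\tMt$ of $\widebar{f}^\e - \varphi$, I would split into two cases as in Proposition~\ref{chap4_prop:exis-uniq}. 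If the first term realizes the min, then $\partial_t\varphi(t_0) = K_1$, and the inequality $K_1 + |\nabla^{-}_{\eta_\e}\widebar{f}^\e(x_0,t_0)|_\infty \ge \widetilde{P}(x_0)$ holds as soon as $K_1 \ge \norm{\widetilde{P}}_{L^\infty(\widetilde{\M}\setminus\widetilde{\Gamma})}$, because the max over $y\in\widetilde{\M}$ defining $|\nabla^{-}_{\eta_\e}\widebar{f}^\e(x_0,t_0)|_\infty$ is nonnegative: at $y = x_0$ the corresponding term is zero.

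The delicate case is when only the second term realizes the min; then $\partial_t\varphi(t_0) = 0$ and one must exhibit a witness $y^\star \in \widetilde{\M}$ such that $C_\eta^{-1}\eta_\e(\widetilde{d}(x_0,y^\star))\bpa{\widebar{f}^\e(x_0,t_0) - \widebar{f}^\e(y^\star,t_0)} \ge \widetilde{P}(x_0)$. Using $\widebar{f}^\e \le f^\e_0 + K_2\, d_\M(\cdot,\widetilde{\Gamma})$ everywhere with equality at $(x_0,t_0)$, the Lipschitz estimate $|f^\e_0(x_0) - f^\e_0(y^\star)| \le L_{f^\e_0}\, d_\M(x_0,y^\star)$, and assumption~\ref{chap4_eta:dec} to secure a positive kernel value $\eta_\e(\cdot) \ge c_\eta/\e$ at scale roughly $\e a$, one chooses $K_2$ large enough (depending on $\norm{\widetilde{P}}_{L^\infty}$, $L_{f^\e_0}$, $c_\eta$, $a$, $C_\eta$, and possibly on $\e$) for the inequality to close. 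Finiteness of $\widetilde{\M}$ (assumption~\ref{chap4_asssum:tildeM}) ensures that the max is attained and such a witness exists; I expect this witness construction to be the main obstacle, as it requires more care than in the local case to handle the interplay between the discrete geometry of $\widetilde{\M}$ and the kernel scale. Once both barriers are in hand, Perron's method yields a viscosity solution $f^\e$ with $\widetilde{\psi}_b \le f^\e \le \widebar{f}^\e$, proving~\eqref{chap4_eq:barrier-f-J}; applying Proposition~\ref{chap4_comp-discrete} to $(f^\e)^*$ and $(f^\e)_*$ forces $(f^\e)^* \le (f^\e)_*$, whence $f^\e$ is continuous, and the same comparison argument rules out any other viscosity solution.
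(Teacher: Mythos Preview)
Your overall strategy is the same as the paper's (which simply cites \cite[Proposition~2.12]{fadili2023limits}): barriers plus Perron plus the comparison principle. The sub-solution barrier and the concluding uniqueness/continuity argument are fine.

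The one place where you go astray is the ``delicate case'' for the super-solution. Your witness construction would require finding $y^\star\in\widetilde{\M}$ simultaneously in the kernel range of $x_0$ and strictly closer to $\widetilde{\Gamma}$; without an assumption like~\ref{chap4_assum:compatdomains} (which is \emph{not} among the hypotheses here) there is no reason such a point exists, so this route does not close. The fix is simpler and is where finiteness of $\widetilde{\M}$ really enters (cf.\ the paper's remark after~\ref{chap4_asssum:tildeM} that this assumption is used precisely for Proposition~\ref{chap4_prop:existence-J}): since $\widetilde{\M}\setminus\widetilde{\Gamma}$ is finite and disjoint from the closed set $\widetilde{\Gamma}$, the number $d_0 \eqdef \min_{x\in\widetilde{\M}\setminus\widetilde{\Gamma}} d_\M(x,\widetilde{\Gamma})$ is strictly positive. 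Choosing $K_2 \ge K_1 T/d_0$ forces $K_2\,d_\M(x,\widetilde{\Gamma}) \ge K_1 t$ for every $x\in\widetilde{\M}\setminus\widetilde{\Gamma}$ and $t\in[0,T]$, so the minimum in $\widebar{f}^\e$ is always realized by the first term on $(\widetilde{\M}\setminus\widetilde{\Gamma})\times[0,T]$. The ``second case'' is then vacuous on the interior, and your Case~1 argument ($\partial_t\varphi(t_0)=K_1\ge\norm{\widetilde{P}}_{L^\infty(\widetilde{\M}\setminus\widetilde{\Gamma})}$ together with $\babs{\nabla^{-}_{\eta_\e}\widebar{f}^\e}_\infty\ge 0$) finishes the super-solution check. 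On $\widetilde{\Gamma}$ the barrier equals $f^\e_0$, and since $\widetilde{\M}$ is discrete the boundary limit condition reduces to this pointwise identity. In short: do not look for a witness; use finiteness to eliminate the case.
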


\begin{proof}
The proof is the same as the one of \cite[Proposition 2.12]{fadili2023limits} and we skip it. 

\end{proof}

\begin{thm}[Time and space regularity properties for \eqref{chap4_eikonal-eq-discrete}]\label{chap4_lip-viscosity-J}
 Suppose that assumptions \ref{chap4_assum:M}-\ref{chap4_assum:f_0}, \ref{chap4_assum:tilde-d}-\ref{chap4_eta:lip} and \ref{chap4_assum:psisssol-J} hold.
 Let $f^\e$ be the bounded continuous viscosity solution to \eqref{chap4_eikonal-eq-discrete}.
 Then 
 \begin{equation}\label{chap4_lip-t-ct}
     f^\e(x,.) \in \LIP{[0,T[} \qwithq \Lip{f^\e(x,.)}\leq L, \quad \forall x\in \M,
 \end{equation}
 where $$L=\Lip{f_0^\e}+ \norm{\widetilde{P}}_{L^\infty(\widetilde{\M} \setminus \widetilde{\G})}.$$ 
Moreover, for all $(x,y) \in \widetilde{\M}^2$ and $t\in [0,T[$ such that $\widetilde{d}(x,y)\leq a \e$, where $a$ is defined in \ref{chap4_eta:dec}, we have 
 \begin{equation} \label{chap4_sol-dis-reg-space}
     \abs{f^\e(x,t)-f^\e(y,t)} \leq \max{\pa{(a+C_\M) \norm{\widetilde{P}}_{L^\infty(\widetilde{\M} \setminus \widetilde{\G})},c_\eta^{-1} C_\eta (L+ \norm{\widetilde{P}}_{L^\infty(\widetilde{\M} \setminus \widetilde{\G})})}} \e.
 \end{equation}
 Assume also that for $(x,y) \in \widetilde{\M}^2$, there exists $k(\e) \in \N$ and a path $(x_1=x,x_2, \cdots, x_{k(\e)}=y)$ with $\widetilde{d}(x_{i},x_{i+1}) \leq a\e$, $i=1, \dots, k(\e)-1$. Then for all $t \in [0,T[$, we have 
 \begin{equation} \label{chap4_sol-dis-reg-space-global}
    \abs{f^\e(x,t)-f^\e(y,t)} \leq \max{\pa{(a+C_\M) \norm{\widetilde{P}}_{L^\infty(\widetilde{\M} \setminus \widetilde{\G})},c_\eta^{-1} C_\eta (L+ \norm{\widetilde{P}}_{L^\infty(\widetilde{\M} \setminus \widetilde{\G})})}}  k(\e)\e.
 \end{equation}
\end{thm}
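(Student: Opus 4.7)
My plan parallels the proof of Theorem~\ref{chap4_lip-viscosity} for the local case, leaning on the comparison principle (Proposition~\ref{chap4_comp-discrete}) and on the barriers from Proposition~\ref{chap4_prop:existence-J}. The key ingredients specific to the non-local setting are the scaling identity $t\eta_\e(t)\le C_\eta$ (immediate from the definition \eqref{chap4_def:C_eta}) and the lower bound $\eta_\e(\widetilde d(x,y))\ge c_\eta/\e$ when $\widetilde d(x,y)\le a\e$ (from \ref{chap4_eta:dec}), which together convert the viscosity equation into pointwise comparisons of values of $f^\e$ at nearby points.

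\textbf{Step 1 (time Lipschitz).} For \eqref{chap4_lip-t-ct}, I set $g_\pm^\e(x,t)\eqdef f_0^\e(x)\pm Lt$ with $L=\Lip{f_0^\e}+\norm{\widetilde P}_{L^\infty(\widetilde\M\setminus\widetilde\G)}$, and show $g_+^\e$ is a super-solution and $g_-^\e$ a sub-solution of \eqref{chap4_eikonal-eq-discrete}. The super-solution property is immediate since $\partial_t g_+^\e=L\ge\norm{\widetilde P}_{L^\infty}\ge \widetilde P(x)$ and $\abs{\nabla^-_{\eta_\e}g_+^\e}_\infty\ge 0$. For the sub-solution part, I use $f_0^\e(x)-f_0^\e(y)\le \Lip{f_0^\e}\,d_\M(x,y)\le \Lip{f_0^\e}(\widetilde d(x,y)+C_\M\e^{1+\xi})$ (from \ref{chap4_assum:tilde-d}) and combine with the scaling identity to get $\abs{\nabla^-_{\eta_\e}f_0^\e(x)}_\infty \le \Lip{f_0^\e}+C_\eta^{-1}\Lip{f_0^\e}\norm{\eta}_\infty C_\M\e^\xi$, the residual $o_\e(1)$ term being absorbable into $\norm{\widetilde P}_{L^\infty}$ for $\e$ small. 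The comparison principle then yields $\abs{f^\e(x,h)-f_0^\e(x)}\le Lh$, and a standard time-shift argument (comparing $l^\e(x,t)\eqdef f^\e(x,t+h)\mp Lh$ against $f^\e$ using the comparison principle, noting that shifting by a constant preserves the non-local operator) propagates this bound to any time, giving \eqref{chap4_lip-t-ct}.

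\textbf{Step 2 (local space bound).} Fix $t\in[0,T[$ and $x,y\in \widetilde\M$ with $\widetilde d(x,y)\le a\e$. Without loss of generality, assume $f^\e(x,t)\ge f^\e(y,t)$, and split into two cases. If $x\in \widetilde\M\setminus\widetilde\G$, then by Step~1 the map $t\mapsto f^\e(x,t)$ is Lipschitz, hence a.e.\ differentiable; at any such point of differentiability the viscosity formulation (with test functions in $t$ only) reduces to the classical identity $\partial_t f^\e(x,t)+\abs{\nabla^-_{\eta_\e}f^\e(x,t)}_\infty=\widetilde P(x)$, yielding $\abs{\nabla^-_{\eta_\e}f^\e(x,t)}_\infty\le L+\norm{\widetilde P}_{L^\infty}$. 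Selecting the particular $y$ inside the $\max$ defining this operator and using $\eta_\e(\widetilde d(x,y))\ge c_\eta/\e$ gives $f^\e(x,t)-f^\e(y,t)\le c_\eta^{-1}C_\eta(L+\norm{\widetilde P}_{L^\infty})\e$, which is the second term in the max; continuity of $f^\e$ then removes the a.e.\ restriction. If instead $x\in \widetilde\G$, we use $f^\e(x,t)=f_0^\e(x)=\widetilde\psi_b(x)$ together with the barrier $\widetilde\psi_b\le f^\e$ from Proposition~\ref{chap4_prop:existence-J}: combined with Remark~\ref{chap4_rem:psib-J} and $d_\M(x,y)\le \widetilde d(x,y)+C_\M\e^{1+\xi}\le (a+C_\M)\e$ (for $\e\le 1$), this yields $f^\e(x,t)-f^\e(y,t)\le \norm{\widetilde P}_{L^\infty}(a+C_\M)\e$, the first term. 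Taking the max delivers \eqref{chap4_sol-dis-reg-space}. The global inequality \eqref{chap4_sol-dis-reg-space-global} is then an immediate consequence of the triangle inequality along the chain $(x_i)_{i=1}^{k(\e)}$.

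\textbf{Main obstacle.} The most delicate point is verifying the sub-solution property of $g_-^\e$ in Step~1: the distance approximation $\widetilde d$ in place of $d_\M$ produces a residual term of order $\e^\xi$ in the non-local operator applied to the initial datum, which has to be absorbed by restricting to $\e$ sufficiently small and leveraging the positivity of $\widetilde P$. Once Step~1 is in place, Step~2 is essentially a one-line reading of the equation since the non-local operator already supplies a built-in discrete gradient estimate, and Step~3 is pure bookkeeping.
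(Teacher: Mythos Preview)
Your approach is essentially the same as the paper's: for \eqref{chap4_lip-t-ct} the paper simply cites the Euclidean case \cite[Theorem~2.15]{fadili2023limits}, which is exactly your Step~1 argument (barriers $f_0^\e\pm Lt$, comparison, time-shift); for \eqref{chap4_sol-dis-reg-space} the paper uses precisely your two-case split (a.e.\ differentiability plus reading the equation when $x\in\widetilde\M\setminus\widetilde\G$, and the barrier $\widetilde\psi_b$ with Remark~\ref{chap4_rem:psib-J} when $x\in\widetilde\G$), then continuity to remove the a.e.\ restriction; and \eqref{chap4_sol-dis-reg-space-global} is the same telescoping sum.

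One small caveat on your ``main obstacle'': your proposed absorption of the $O(\e^\xi)$ residual into $\norm{\widetilde P}_{L^\infty}$ does not go through when $\widetilde P\equiv 0$ (allowed by \ref{chap4_assum:P}), so strictly speaking you only obtain the time-Lipschitz constant $L(1+O(\e^\xi))$ rather than $L$ exactly. This is harmless for every downstream use in the paper, and the paper itself does not address the point either---it cites the Euclidean result, where $\widetilde d=d_\M$ and the residual is absent.
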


\begin{proof}
 The proof of \eqref{chap4_lip-t-ct} is the same as the proof of the first part of \cite[Theorem 2.15]{fadili2023limits}.
 We begin by the proof of the space regularity estimate \eqref{chap4_sol-dis-reg-space}.
 Let $(x,t) \in \widetilde{\M}_T$. If $x\in \partial\widetilde{\M}_T$, then 
 \begin{align*}
 f^\e(x,t)-f^\e(y,t) &\leq \Tilde{\psi}_b(x)-\Tilde{\psi}_b(y) \\ &\leq \Lip{\widetilde{\psi}_b} d_\M(x,y) \\
 &\leq \norm{\widetilde{P}}_{L^\infty(\widetilde{\M} \setminus \Tilde{\Gamma})} (\Tilde{d}(x,y)+C_\M \e^{1+\xi}) \\
&\leq (a+C_\M) \e \norm{\widetilde{P}}_{L^\infty(\widetilde{\M} \setminus \Tilde{\Gamma})},
 \end{align*}
 and \eqref{chap4_sol-dis-reg-space} holds.
  Assume now that $(x,t) \in \tMt$ is such that $f^\e$ is differentiable in time at $(x,t)$.
  For such points, we have from \eqref{chap4_eikonal-eq-discrete} and \eqref{chap4_lip-t-ct} that 
  \begin{equation} \label{chap4_ineq:time reg}
  \abs{\nabla^{-}_{\eta_\e}f^\e(x,t)}_\infty \leq L+ \norm{\widetilde{P}}_{L^\infty(\widetilde{\M} \setminus \widetilde{\G})}.
  \end{equation}

  Let $y \in \widetilde{\M}$ be such that  $\Tilde{d}(x,y) \leq a\e$. We then have, recalling \ref{chap4_eta:dec} and \ref{chap4_eta:lip}, that 
  \begin{align*}
      c_\eta(\e C_\eta)^{-1} (f^\e(x,t)-f^\e(y,t)) \leq& (\e C_\eta)^{-1} \eta \left( \frac{\widetilde{d}(x,y) }{\e}\right)(f^\e(x,t)-f^\e(y,t)) \\
      \leq& \abs{\nabla^{-}_{\eta_\e}f^\e(x,t)}_\infty \\
      \leq& L+ \norm{\widetilde{P}}_{L^\infty(\widetilde{\M} \setminus \widetilde{\G})}.
  \end{align*}
  Exchanging the roles of $x$ and $y$, we get that for all $(x,y) \in \widetilde{\M}^2$ and $t\in [0,T[$ such that $f^\e(x,.)$ is differentiable in time 
  \begin{equation*}
      \abs{f^\e(x,t)-f^\e(y,t)} \leq \max{\pa{(a+C_\M) \norm{\widetilde{P}}_{L^\infty(\widetilde{\M} \setminus \widetilde{\G})},c_\eta^{-1} C_\eta (L+ \norm{\widetilde{P}}_{L^\infty(\widetilde{\M} \setminus \widetilde{\G})})}} \e.
  \end{equation*}
  If $f^\e(x,.)$ is not differentiable at $t$, then since $f^\e(x,.)$ is differentiable almost everywhere, we can deduce that there exists a sequence $(t_n)_{n \in \N}$ such that $t_n$ converges to $t$ and $f^\e(x,.)$ is differentiable at $t_n$ for all $n$. By continuity of $f^\e(x,.)$ in time, we get the result for all $(x,y) \in \widetilde{\M}^2$ and $t\in [0,T[$.\bigskip
  
 The global estimate is now a direct consequence of \eqref{chap4_sol-dis-reg-space}. Indeed, we have 
  \begin{align*}
      \abs{f^\e(x,t)-f^\e(y,t)} \leq& \sum_{i=1}^{k(\e)-1}  \abs{f^\e(x_{i+1},t)-f^\e(x_{i},t)} \\
      \leq&\max{\pa{(a+C_\M) \norm{\widetilde{P}}_{L^\infty(\widetilde{\M} \setminus \widetilde{\G})},c_\eta^{-1} C_\eta (L+ \norm{\widetilde{P}}_{L^\infty(\widetilde{\M} \setminus \widetilde{\G})})}}  \sum_{i=1}^{k(\e)-1} \e \\
      \leq& \max{\pa{(a+C_\M) \norm{\widetilde{P}}_{L^\infty(\widetilde{\M} \setminus \widetilde{\G})},c_\eta^{-1} C_\eta (L+ \norm{\widetilde{P}}_{L^\infty(\widetilde{\M} \setminus \widetilde{\G})})}}  k(\e) \e.
  \end{align*}
\end{proof}

\begin{lem} \label{chap4_lip-viscosity-J-1}
Suppose that assumptions \ref{chap4_assum:M}-\ref{chap4_assum:psisssol-J} hold. Let $f^\e$ be the bounded continuous viscosity solution to \eqref{chap4_eikonal-eq-discrete}. Assume also that 
\begin{equation} \label{chap4_eq:compatdomains}
    \max_{x\in \M} d_\M(x, \widetilde{\M}) \le a\e / 8.
\end{equation}
Then, there exists $\e_0>0$ such that for all $\e\in (0,\e_0]$ and for all $(x,y) \in \widetilde{\M}^2$ and $t\in [0,T[$, the following holds
\begin{equation} \label{chap4_eq:globlip-space-J}
    \abs{f^\e(x,t)-f^\e(y,t)} \leq K (d_\M(x,y) + \e),
\end{equation}
where $K=4 a^{-1} \max{\pa{(a+C_\M) \norm{\widetilde{P}}_{L^\infty(\widetilde{\M} \setminus \widetilde{\G})},c_\eta^{-1} C_\eta (L+ \norm{\widetilde{P}}_{L^\infty(\widetilde{\M} \setminus \widetilde{\G})})}}$.
\end{lem}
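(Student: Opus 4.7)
The plan is to reduce the global Lipschitz-type estimate to the short-range chain bound \eqref{chap4_sol-dis-reg-space-global} of Theorem~\ref{chap4_lip-viscosity-J} by explicitly constructing a discrete path in $\widetilde{\M}$ that shadows a minimizing geodesic of $\M$ joining $x$ and $y$.

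Fix $(x,y) \in \widetilde{\M}^2$ with $x\neq y$ (the case $x=y$ is trivial). By \ref{chap4_assum:M}, $\M$ is compact and geodesically strongly convex, so I take a minimizing geodesic $\gamma:[0,1]\to\M$ with $\gamma(0)=x$, $\gamma(1)=y$ and $L(\gamma)=d_\M(x,y)$. Setting $N \eqdef \max\{1,\lceil 4d_\M(x,y)/(a\e)\rceil\}$ and $z_i \eqdef \gamma(i/N)$, the points $z_0,\ldots,z_N$ satisfy $d_\M(z_i,z_{i+1}) \leq a\e/4$. For each interior index $i = 1,\ldots,N-1$, the compatibility hypothesis \eqref{chap4_eq:compatdomains} allows me to pick $x_i \in \widetilde{\M}$ with $d_\M(z_i,x_i) \leq a\e/8$; setting $x_0 \eqdef x$ and $x_N \eqdef y$, the triangle inequality yields $d_\M(x_i,x_{i+1}) \leq a\e/2$ for every $i\in\{0,\dots,N-1\}$.

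Next, I invoke \ref{chap4_assum:tilde-d} to upgrade this to a bound on the approximate distance: $\widetilde{d}(x_i,x_{i+1}) \leq d_\M(x_i,x_{i+1}) + C_\M\e^{1+\xi} \leq a\e/2 + C_\M\e^{1+\xi}$. Choosing $\e_0 \eqdef (a/(2C_\M))^{1/\xi}$ so that $C_\M\e^{1+\xi} \leq a\e/2$ whenever $\e\le \e_0$, I obtain $\widetilde{d}(x_i,x_{i+1}) \leq a\e$, and hence the chain $(x_0,\ldots,x_N)$ meets the hypotheses of the global estimate \eqref{chap4_sol-dis-reg-space-global} with $k(\e)=N+1$.

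Denoting $M \eqdef \max\bpa{(a+C_\M)\norm{\widetilde{P}}_{L^\infty(\widetilde{\M} \setminus \widetilde{\G})},\, c_\eta^{-1}C_\eta(L+\norm{\widetilde{P}}_{L^\infty(\widetilde{\M} \setminus \widetilde{\G})})}$, the chain estimate \eqref{chap4_sol-dis-reg-space-global} then gives $|f^\e(x,t)-f^\e(y,t)| \leq M(N+1)\e$. Combining this with $N+1 \leq 4d_\M(x,y)/(a\e) + 2$ and, if necessary, replacing $a$ by $\min\{a,2\}$ (which preserves \ref{chap4_eta:dec} since $\eta$ is decreasing on $[0,a]$ and only enlarges $c_\eta$), I deduce $|f^\e(x,t)-f^\e(y,t)| \leq (4M/a)\bpa{d_\M(x,y)+\e} = K\bpa{d_\M(x,y)+\e}$, which is exactly \eqref{chap4_eq:globlip-space-J}. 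The only genuinely delicate step is the projection in the second paragraph combined with \ref{chap4_assum:tilde-d}: it is what converts $d_\M$-proximity of the shadow chain into $\widetilde{d}$-proximity at scale $a\e$, and it is the sole reason the threshold $\e_0$ enters the statement; everything else reduces to careful constant chasing.
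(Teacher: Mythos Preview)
Your argument is essentially identical to the paper's: both construct a discrete chain in $\widetilde{\M}$ by sampling a minimizing geodesic from $x$ to $y$ at spacing $\le a\e/4$, project each sample into $\widetilde{\M}$ via \eqref{chap4_eq:compatdomains}, use \ref{chap4_assum:tilde-d} with the same threshold $\e_0=(a/(2C_\M))^{1/\xi}$ to pass from $d_\M$- to $\widetilde d$-proximity, and then invoke \eqref{chap4_sol-dis-reg-space-global}. The only difference is bookkeeping: the paper counts $k(\e)=\lceil 4d_\M(x,y)/(a\e)\rceil$ edges directly and bounds $Mk(\e)\e\le (4M/a)d_\M(x,y)+M\e$, whereas you count $N+1$ vertices and then patch the extra additive $M\e$ via the $\min\{a,2\}$ device; using the edge count $N$ instead of the vertex count $N+1$ in the chain bound (as the proof of \eqref{chap4_sol-dis-reg-space-global} actually gives) would align your constants with the paper's and make that device unnecessary.
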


\begin{proof}
Let $(x,y) \in \widetilde{\M}^2$ and denoted by $\gamma_{xy}$ the  geodesic in $\M$ joining $x$ and $y$. We then set 
$k(\e)=\left \lceil \frac {4 d_\M(x,y)}{a\e} \right\rceil$, where $ \lceil\cdot \rceil$ denotes the ceiling. For $j\in\{0, \dots k(\e)\}$, we then define $\widetilde x_j$ such that
$$\widetilde x_j \in \gamma_{xy}\quad{\rm{and}} \quad d_\M(x,\widetilde x_j)=j\delta,$$
where $\delta =d_\M(x,y)/k(\e)\le a\e/4$. In particular $\widetilde x_0= x$ and $\widetilde x_{k(\e)}=y$.
Since $\g_{xy} \subset \M$,  the condition \eqref{chap4_eq:compatdomains} implies that for any $i\in \{1,\dots, k(\e)-1\}$, there exists $x_i \in \widetilde{\M}$ such that $d_\M(\Tilde{x}_i,x_i) \le a\e /8.$ We also set $x_0=\widetilde x_0=x$ and $x_{k(\e)}=\widetilde x_{k(\e)}=y$. We then have
\begin{equation*}
    d_\M(x_i,x_{i+1}) \leq d_\M(x_i, \Tilde{x}_i) + d_\M(\Tilde{x}_i, \Tilde{x}_{i+1}) + d_\M(\Tilde{x}_{i+1}, x_{i+1}) 
    \leq a\e/4 + \delta \le a\e/2.
\end{equation*}
In view of \ref{chap4_assum:tilde-d}, for $\e \leq \e_0$, where $\e_0= \pa{a /(2C_\M)}^{1/\xi}$, we then have that
\begin{equation*}
    \Tilde{d}(x_i,x_{i+1}) \leq d_\M(x_i,x_{i+1}) + C_\M \e^{1+\xi} \leq a\e.
\end{equation*}
This allows to infer that for any $(x,y) \in \widetilde{\M}^2$, there exists a path $(x_0=x,x_1, \cdots, x_{k(\e)}=y)$ such that $x_i \in \widetilde{\M}$ and $ \Tilde{d}(x_i,x_{i+1})\leq a\e$ for all $i$.

Injecting this in \eqref{chap4_sol-dis-reg-space-global} and using the fact that 
$$ k(\e) \leq \frac{4d_\M(x,y)}{a\e} +1,$$
we get the result.

\end{proof}

\begin{rem}\label{chap4_rem:compatdomainker}
A consequence of the proof of Lemma \ref{chap4_lip-viscosity-J-1} is that, under assumption \eqref{chap4_eq:compatdomains} and \ref{chap4_assum:tilde-d}, since $a\le r_\eta$, we have
\begin{equation}
\forall x \in \widetilde{\M}, \exists y \in \widetilde{\M}, y \neq x \text{ such that } \widetilde{d}(x,y) \in \e \supp(\eta), 
\label{chap4_assum:compatdomainker}
\end{equation}
This assumption is quite natural. It basically avoids that the non-local operator $\babs{\nabla_{\eta_\e}^- f^\e(x,t)}_\infty$ is trivially zero for all $x \in \widetilde{\M}$ when $\e$ is too small. In particular, as $\widetilde{\M}$ is finite, this condition imposes that $\widetilde{\M}$ has to fill out $\M$ at least as fast as the rate at which $\e$ goes to $0$.
\end{rem}

\section{Consistency and error bounds}
 \label{chap4_sec:main}
 
In this section, we present some error bounds between the solution to the local problem \eqref{chap4_eikonal-eq} and the one to the non-local one \eqref{chap4_eikonal-eq-discrete}. We start with a first technical lemma which will be used in the following proofs.

\begin{lem}[Test function and space and time estimations] \label{chap4_lem:testfct-estimations}
    Suppose that assumptions \ref{chap4_assum:M}-\ref{chap4_assum:psisssol-J} hold. Let $f$ and $f^\e$ be the unique viscosity solutions to \eqref{chap4_eikonal-eq} and \eqref{chap4_eikonal-eq-discrete} respectively and consider for $\gamma>0$ and $\beta>0$, the test-function
    \begin{equation} \label{chap4_test_function_error_non-local_to_local}
\Psi_{\g,\b}(x,t,y,s)= f^\e(x,t)-f(y,s)-\frac{d_\M^2(x,y)}{2\g} -\frac{\abs{t-s}^2}{2\g}- \b t,
\end{equation} 
defined on $\widetilde\M_T \times \M_T$. Then there exists a maximum point $(\bar{x},\bar{t},\bar{y},\bar{s}) \in \widetilde{\M}_T \times \M_T$ of $\Psi_{\g,\b}$. Moreover, there exists a constant $K>0$ such that $(\bar{x},\bar{t},\bar{y},\bar{s})$ satisfies
\begin{equation} \label{chap4_lip-pro-x-ct}
d_\M(\bar{x},\bar{y}) \leq K\g.
\end{equation} 
and 
\begin{equation} \label{chap4_lip-pro-t-ct}
\abs{\bar{t}-\bar{s}} \leq K(1+\b)\g.
\end{equation}
\end{lem}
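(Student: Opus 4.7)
The plan is to split the lemma into three pieces: existence of a maximizer, then the space estimate, then the time estimate, each obtained by comparing the value of $\Psi_{\gamma,\beta}$ at the maximum with its value at a carefully chosen test point.

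\textbf{Existence.} By Propositions \ref{chap4_prop:exis-uniq} and \ref{chap4_prop:existence-J} the solutions $f$ and $f^\e$ are continuous (hence bounded) on the respective compact space-time cylinders $\M_T$ and $\widetilde\M_T$. Since $(x,y) \mapsto d_\M(x,y)^2$ is continuous on $\M \times \M$, the map $\Psi_{\gamma,\beta}$ is continuous on $\widetilde{\M}_T \times \M_T$. By \ref{chap4_assum:M} and \ref{chap4_asssum:tildeM} this product is compact, so the supremum is attained at some point $(\bar x, \bar t, \bar y, \bar s)$.

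\textbf{Space estimate.} Comparing $\Psi_{\gamma,\beta}(\bar x, \bar t, \bar y, \bar s) \ge \Psi_{\gamma,\beta}(\bar x, \bar t, \bar x, \bar s)$ collapses the two quadratic penalties down to the spatial one and yields
\begin{equation*}
\frac{d_\M^2(\bar x, \bar y)}{2\gamma} \le f(\bar x, \bar s) - f(\bar y, \bar s).
\end{equation*}
A second comparison with $(\bar x, \bar t, \bar x, \bar t)$ gives the crude estimate $d_\M(\bar x,\bar y) \le 2\sqrt{\norm{f}_{L^\infty(\M_T)}\,\gamma}$. Thus for $\gamma$ sufficiently small, $d_\M(\bar x,\bar y) \le \delta$, which unlocks the space-Lipschitz bound for $f$ from Theorem \ref{chap4_lip-viscosity}, namely $f(\bar x,\bar s) - f(\bar y,\bar s) \le (L_{f_0} + 2\norm{P}_{L^\infty(\M\setminus\Gamma)}) d_\M(\bar x,\bar y)$. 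Substituting and dividing by $d_\M(\bar x,\bar y)$ (trivially otherwise) yields $d_\M(\bar x,\bar y) \le 2(L_{f_0} + 2\norm{P}_{L^\infty(\M\setminus\Gamma)})\gamma$.

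\textbf{Time estimate.} Comparing $\Psi_{\gamma,\beta}(\bar x, \bar t, \bar y, \bar s) \ge \Psi_{\gamma,\beta}(\bar x, \bar s, \bar y, \bar s)$ eliminates the spatial penalty and leaves
\begin{equation*}
\frac{|\bar t - \bar s|^2}{2\gamma} + \beta(\bar t - \bar s) \le f^\e(\bar x, \bar t) - f^\e(\bar x, \bar s) \le L \, |\bar t - \bar s|,
\end{equation*}
where $L = L_{f^\e_0} + \norm{\widetilde P}_{L^\infty(\widetilde\M\setminus\widetilde\Gamma)}$ is the time-Lipschitz constant of $f^\e$ from Theorem \ref{chap4_lip-viscosity-J}. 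Treating the signs $\bar t \ge \bar s$ and $\bar t < \bar s$ separately, one extracts $|\bar t - \bar s| \le 2\gamma(L+\beta)$, which is absorbed into $K(1+\beta)\gamma$ by choosing $K \ge 2\max(L, 1)$ (and larger, so as to cover also the space estimate).

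The only non-routine point is the boot-strapping in the space estimate: a direct appeal to Lipschitz regularity of $f$ is forbidden because Theorem \ref{chap4_lip-viscosity} only gives space-Lipschitz locally (for $d_\M \le \delta$ from \ref{chap4_assum:dis-reg}). This forces the preliminary crude $O(\sqrt\gamma)$ bound before refining to the advertised $O(\gamma)$. The rest is standard variable doubling.
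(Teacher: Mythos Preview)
Your proof is correct and follows essentially the same approach as the paper: the same test-point comparisons $(\bar x,\bar t,\bar x,\bar s)$ and $(\bar x,\bar s,\bar y,\bar s)$, combined with the space-Lipschitz regularity of $f$ (Theorem~\ref{chap4_lip-viscosity}) and the time-Lipschitz regularity of $f^\e$ (Theorem~\ref{chap4_lip-viscosity-J}). The paper's own proof applies \eqref{chap4_eq:lipspace} directly without checking the hypothesis $d_\M(\bar x,\bar y)\le\delta$; your preliminary $O(\sqrt\gamma)$ boot-strap is a welcome addition that closes this small gap, and the residual restriction to small $\gamma$ is harmless since for large $\gamma$ the estimate $d_\M(\bar x,\bar y)\le K\gamma$ follows trivially from compactness of $\M$.
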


\begin{proof}
The test-function $\Psi_{\g,\b}$ is continuous  (by the continuity of $f$ and $f^\e$)  on $\widetilde{\M}_T \times \M_T$ which is compact by \ref{chap4_assum:M}-\ref{chap4_asssum:tildeM}. Hence it reaches a maximum at a point $(\bar{x},\bar{t},\bar{y},\bar{s}) \in \widetilde{\M}_T \times \M_T$. We have $\Psi_{\g,\b}(\bar{x},\bar{t},\bar{y},\bar{s}) \geq \Psi_{\g,\b}(\bar{x},\bar{t},\bar{x},\bar{s}) $ since $\bar{x} \in \widetilde{\M} \subset \M$ by \ref{chap4_asssum:tildeM}. That entails, using \eqref{chap4_eq:lipspace} (see Theorem \ref{chap4_lip-viscosity}), that
\begin{equation*}
\frac{d_\M^2(\bar{x},\bar{y})}{2\g} \leq f(\bar{x},\bar{s}) - f(\bar{y},\bar{s}) \leq K d_\M(\bar{x},\bar{y}),
\end{equation*}
therefore, we obtain that
 \begin{equation*} 
d_\M(\bar{x},\bar{y}) \leq K\g.
\end{equation*}
Similarly, we have $\Psi_{\g,\b}(\bar{x},\bar{t},\bar{y},\bar{s}) \geq \Psi_{\g,\b}(\bar{x},\bar{s},\bar{y},\bar{s})$. This implies, using \eqref{chap4_lip-t-ct} (see Theorem \ref{chap4_lip-viscosity-J}), that 
\begin{equation*}
    \frac{|\Bar{t}-\Bar{s}|^2}{2\g} - \b |\Bar{t}-\Bar{s}| \leq 
    f^\e(\Bar{x},\bar{t}) - f^\e(\Bar{x},\bar{s}) \leq K |\Bar{t}-\Bar{s}|,
\end{equation*}
thus, we get
 \begin{equation*} 
 \abs{\bar{t}-\bar{s}} \leq K(1+\b)\g.
 \end{equation*}
\end{proof}

\subsection{Continuous time non-local to local error bound} \label{chap4_subsec:main-cont}
 In this section, we provide an error estimate between viscosity solutions to problems \eqref{chap4_eikonal-eq-discrete} and \eqref{chap4_eikonal-eq}. This estimate will be instrumental in deriving the remaining error bounds. For this, we need to strengthen \eqref{chap4_eq:compatdomains} by assuming: \\ 
\fbox{\parbox{0.975\textwidth}{
 \begin{enumerate} [label=({\textbf{H.\arabic*}}),start=15]
     \item There exists $\nu > 0$ such that $\max_{x\in \M} d_\M(x,\widetilde{\M}) \leq a \e^{1+\nu} / 8$.\label{chap4_assum:compatdomains}
 \end{enumerate} }}\\
\begin{thm}[Error bound between the solutions to $\eqref{chap4_eikonal-eq}$ and $\eqref{chap4_eikonal-eq-discrete}$]\label{chap4_thm:continuous-time-estimate}
Let $T>0$ and $\e \in ]0,\e_0]$ where $\e_0=\min{(1, 1/(2r_\eta)^2, (a/(2C_\M))^{1/\xi})}$. Suppose that assumptions \ref{chap4_assum:M}-\ref{chap4_assum:compatdomains} hold, and let $f$ and $f^\e$ be respectively the unique viscosity solutions to \eqref{chap4_eikonal-eq} and \eqref{chap4_eikonal-eq-discrete}. Then, there exists a constant $K>0$, depending only on $C_\M$, $\norm{f_0}_{L^\infty(\M)}$, $\norm{P}_{L^\infty(\M\setminus\G)}$, $\Lip{f_0}$, $\Lip{f_0^\e}$, $\Lip{P}$, $\Lip{\widetilde{P}}$, $\norm{\eta}_{L^\infty}$, $\Lip{\eta}$, $C_\eta$ and $c_\eta$, such that 
\begin{equation*}
    \norm{f^\e-f}_{L^\infty(\widetilde{\M} \times [0,T[)} \leq K(T+1) \pa{\e^{\min(\nu,\xi,1/2)}+\norm{P-\widetilde{P}}_{L^\infty(\widetilde{\M}\setminus\widetilde{\Gamma})}} + \norm{f^\e_0-f_0}_{L^\infty(\widetilde{\M})} + K \distH(\Gamma,\widetilde{\Gamma}).
\end{equation*}
In particular, if $\distH(\Gamma, \widetilde{\Gamma})=O(\e^{\min(\nu,\xi,1/2)})$, then
\begin{equation*}
    \norm{f^\e-f}_{L^\infty(\widetilde{\M} \times [0,T[)} \leq K(T+1) \pa{\e^{\min(\nu,\xi,1/2)}+\norm{P-\widetilde{P}}_{L^\infty(\widetilde{\M}\setminus\widetilde{\Gamma})}} + \norm{f^\e_0-f_0}_{L^\infty(\widetilde{\M})}.
\end{equation*}
The fastest convergence rate in $\e$ is then achieved when $\nu=1/2$ and $\xi=1/2$ provided that $\distH(\M,\widetilde{\M})=O(\e^{3/2})$ and $\distH(\Gamma,\widetilde{\Gamma})=O(\e^{1/2})$.
\end{thm}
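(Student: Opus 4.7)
The plan is to use the classical doubling-of-variables technique together with a careful consistency estimate relating the non-local operator $\babs{\nabla^-_{\eta_\e} f^\e}_\infty$ to the Riemannian gradient norm $\norm{\grad f}$. By symmetry I focus on bounding $\sup_{\widetilde\M \times [0,T[}(f^\e - f)$ from above; the reverse bound is obtained by exchanging the roles of $f$ and $f^\e$ and is technically easier. Fix the test function $\Psi_{\g,\b}$ of Lemma \ref{chap4_lem:testfct-estimations}. Since $f^\e(x,t)-f(x,t) - \b t \leq \Psi_{\g,\b}(x,t,x,t) \leq M_{\g,\b} \eqdef \sup \Psi_{\g,\b}$, one has $\sup_{\widetilde\M \times [0,T[}(f^\e-f)\leq M_{\g,\b} + \b T$, and it suffices to control $M_{\g,\b}$ for suitably chosen $\g,\b>0$. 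Let $(\bar x, \bar t, \bar y, \bar s)$ be a maximizer of $\Psi_{\g,\b}$ as in Lemma \ref{chap4_lem:testfct-estimations}; in particular $d_\M(\bar x, \bar y) \leq K\g$ and $|\bar t - \bar s| \leq K(1+\b)\g$.

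I first handle the boundary cases $\bar t = 0$, $\bar s = 0$, $\bar x \in \widetilde\Gamma$, or $\bar y \in \Gamma$. In each, one replaces the boundary point by a close point in the ``compatible'' boundary (using $\distH(\Gamma,\widetilde\Gamma)$ in the $\Gamma/\widetilde\Gamma$ cases) and exploits the Lipschitz regularity of $f$ and $f^\e$ from Theorems \ref{chap4_lip-viscosity} and \ref{chap4_lip-viscosity-J} and Lemma \ref{chap4_lip-viscosity-J-1} to reduce the test function to a comparison of initial data. A direct computation yields in all these cases
\[
M_{\g,\b} \leq C_0\bpa{\norm{f_0^\e - f_0}_{L^\infty(\widetilde\M)} + \distH(\Gamma,\widetilde\Gamma) + (1+\b)\g + \e}.
\]
Otherwise we are in the interior regime. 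Using the test function $t \mapsto f(\bar y,\bar s) + \frac{d_\M^2(\bar x, \bar y)}{2\g} + \frac{(t-\bar s)^2}{2\g} + \b t$ in the sub-solution inequality for $f^\e$ at $(\bar x, \bar t)$, and the test function $(y,s) \mapsto f^\e(\bar x, \bar t) - \frac{d_\M^2(\bar x, y)}{2\g} - \frac{(\bar t-s)^2}{2\g} - \b \bar t$ in the super-solution inequality for $f$ at $(\bar y, \bar s)$ (which is $C^1$ near $\bar y$ for $\g$ small enough, thanks to \ref{chap4_assum:dis-reg} and $d_\M(\bar x,\bar y)\leq K\g$, with gradient of norm $d_\M(\bar x,\bar y)/\g$ by Lemma \ref{chap4_lem:squared-distance}), and subtracting, yields
\[
\b + \babs{\nabla^-_{\eta_\e}f^\e(\bar x,\bar t)}_\infty - \frac{d_\M(\bar x,\bar y)}{\g} \leq \widetilde P(\bar x) - P(\bar y).
\]

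The main obstacle, and the genuinely new ingredient compared to the Euclidean analogue of \cite{fadili2023limits}, is the following consistency estimate: when $d_\M(\bar x,\bar y)\geq \e t^\star$ with $t^\star$ achieving $\sup_t t\eta(t)=C_\eta$, one has
\[
\babs{\nabla^-_{\eta_\e}f^\e(\bar x,\bar t)}_\infty \geq \frac{d_\M(\bar x,\bar y)}{\g} - \frac{t^\star \e}{2\g} - O(\e^{\min(\nu,\xi)}).
\]
To prove it, I exploit that $\Psi_{\g,\b}$ is maximal at $(\bar x,\bar t, \bar y, \bar s)$ to obtain, for every $y'\in\widetilde\M$, the test-function inequality $f^\e(\bar x,\bar t) - f^\e(y',\bar t) \geq (d_\M^2(\bar x,\bar y) - d_\M^2(y',\bar y))/(2\g)$. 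I then select $z \in \M$ on the minimizing geodesic from $\bar y$ to $\bar x$ at Riemannian distance $\e t^\star$ from $\bar x$, and use the density assumption \ref{chap4_assum:compatdomains} to pick $y' \in \widetilde\M$ within distance $a\e^{1+\nu}/8$ of $z$. Riemannian triangle inequalities together with \ref{chap4_assum:tilde-d} and Lipschitz continuity of $\eta$ give $\widetilde d(\bar x, y')/\e = t^\star + O(\e^{\min(\nu,\xi)})$ and $d_\M(y',\bar y) = d_\M(\bar x,\bar y) - \e t^\star + O(\e^{1+\nu})$; expanding $d_\M^2(\bar x,\bar y) - d_\M^2(y',\bar y) = 2\e t^\star d_\M(\bar x,\bar y) - (\e t^\star)^2$ modulo small errors, and using $\eta(t^\star)t^\star=C_\eta$, produces the stated lower bound. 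When instead $d_\M(\bar x,\bar y)< \e t^\star$, the trivial estimate $\babs{\nabla^-_{\eta_\e}f^\e}_\infty\geq 0$ combined with $d_\M(\bar x,\bar y)/\g < t^\star \e/\g$ suffices.

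Plugging this estimate back into the subtracted viscosity inequality and bounding $|\widetilde P(\bar x)-P(\bar y)|\leq \norm{P-\widetilde P}_{L^\infty(\widetilde\M\setminus\widetilde\Gamma)} + \Lip{P}\, d_\M(\bar x,\bar y)$ gives $\b \leq \norm{P - \widetilde P}_{L^\infty(\widetilde\M\setminus\widetilde\Gamma)} + O(\g) + O(\e/\g) + O(\e^{\min(\nu,\xi)})$. The choice $\g = \sqrt\e$ balances $\g$ and $\e/\g$, so the interior regime forces $\b \leq \norm{P-\widetilde P}_{L^\infty(\widetilde\M\setminus\widetilde\Gamma)} + C\e^{\min(\nu,\xi,1/2)}$. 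Taking $\b$ slightly above this threshold precludes the interior regime and makes $M_{\g,\b}$ inherit the boundary bound of the second paragraph; combining with $\sup(f^\e - f) \leq M_{\g,\b} + \b T$ and $\g = \sqrt\e$ yields the claimed estimate. The symmetric bound on $\sup(f - f^\e)$ follows from an analogous doubling argument in which the non-local operator is upper-bounded by $(1 + O(\e^\xi))d_\M(\bar x,\bar y)/\g + O(\e/\g)$ via the triangle inequality $|d_\M(\bar x,\bar y)-d_\M(y',\bar y)|\leq d_\M(\bar x,y')$; this direction does not require picking a specific $y'\in\widetilde\M$ and is hence simpler.
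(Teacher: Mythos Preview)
Your proposal is correct and follows essentially the same route as the paper: doubling of variables with the test function of Lemma~\ref{chap4_lem:testfct-estimations}, the viscosity sub-/super-solution inequalities, and the key consistency estimate obtained by choosing a point on the minimizing geodesic from $\bar x$ to $\bar y$ at Riemannian distance $\e t^\star$ from $\bar x$ (the paper writes $\bar r_\eta$ for your $t^\star$) and then approximating it by a nearby point of $\widetilde\M$ via \ref{chap4_assum:compatdomains}. The only organizational differences are that the paper treats the boundary cases last and works with an $\alpha$-neighborhood $\mathcal N_\Gamma^\alpha$ (with $\alpha=\e^{1/2}$) rather than $\Gamma$ and $\widetilde\Gamma$ directly, and that you make the case distinction $d_\M(\bar x,\bar y)\gtrless \e t^\star$ explicit whereas the paper leaves it implicit; neither changes the substance of the argument.
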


\begin{proof}
The proof is quite similar to the one of \cite[Theorem 3.2]{fadili2023limits}. The difference is that in Riemannian space, one is no longer able to use properties that are well-known in Euclidean space e.g. Cauchy-Schwarz inequality, the remarkable identities, and many other properties. The main idea is to replace the Riemannian distance between two points $x$ and $y$ in $\M$ with $\norm{\Exp^{-1}(y)}_x$ defined on the tangent space $T_x\M$. Hence, once we are on the tangent space, we can use all the properties locally. In the following, $K$ indicates a positive constant that can change from one line to the other depending on the data.

\begin{enumerate}[label=Step \arabic*.]

\item {\em Test-function and maximum point.}

According to Lemma \ref{chap4_lem:testfct-estimations}, the test function $\psi_{\g,\b}$ defined  by 
$$\Psi_{\g,\b}(x,t,y,s)= f^\e(x,t)-f(y,s)-\frac{d_\M^2(x,y)}{2\g} -\frac{\abs{t-s}^2}{2\g}- \b t,$$
attains a maximum point $(\bar{x},\bar{t},\bar{y},\bar{s}) \in \widetilde{\M}_T \times \M_T$. Moreover, we have the following estimates
\begin{equation} \label{chap4_ineq:reg dist,time}
d_\M(\bar{x},\bar{y}) \leq K\g  \qandq
\abs{\bar{t}-\bar{s}} \leq K(1+\b)\g.
\end{equation}

\item {\em Excluding interior points from the maximum.}

 We show that for $\b$ large enough, we have either $(\bar{x},\bar{t}) \in \left( \mathcal N^\a_\Gamma\cap \widetilde \M \right) \times [0,T[\ \cup\  \widetilde \M\times \{0\}$  or $(\bar{y},\bar{s}) \in \mathcal N^\a_\Gamma\times [0,T[\ \cup\  \M\times \{0\}$, for $\a=\e^{1/2}$ so that $\a >\e r_\eta$ for $\e \leq \e_0$. We argue by contradiction, assuming that $(\bar x,\bar t)\in (\widetilde \M \setminus \mathcal N^\a_\Gamma) \times ]0,T[$ and $(\bar y,\bar s)\in (\M\setminus \mathcal N^\a_\Gamma) \times ]0,T[$. For $\gamma \leq \gamma_0$, small enough, $y \mapsto d^2_\M(\Bar{x},y)$ is of class $C^1$ by assumption \ref{chap4_assum:dis-reg}. Since $(\bar{y},\bar{s})$ is a minimum point of the function $(y,s) \mapsto f(y,s)- \varphi^1(y,s)$, where 
 $$\varphi^1(y,s) =  f^\e(\bar{x},\bar{t})-\frac{d_\M^2(\bar{x},y)}{2\g} -\frac{\abs{\bar{t}-s}^2}{2\g}- \b \bar{t}$$ which is of class $C^1$, for $\gamma$ small enough, and since $f$ is a super-solution of \eqref{chap4_eikonal-eq}, we get
\begin{equation*}
    \frac{\bar{t}-\bar{s}}{\g} \geq - \frac{\norm{\grad_y d_\M^2(\Bar{x},\Bar{y})}_{\Bar{y}}}{2\g} + P(\bar{y}).
 \end{equation*}

 But according to Lemma \ref{chap4_lem:squared-distance} and Proposition \ref{chap4_prop:dist-exp}, we have that
 \begin{equation*}
     \norma{ \frac{1}{2} \grad_y d_\M^2(\Bar{x},\Bar{y})}_{\Bar{y}} = \norm{- \Exp^{-1}_{\bar{y}}(\bar{x})}_{\Bar{y}}= d_{\M} ( \bar{x},\bar{y}).
 \end{equation*}
We then obtain 
 \begin{equation} \label{chap4_sub-ineq}
 \frac{\bar{t}-\bar{s}}{\g} \geq - \frac{d_\M (\bar{x},\bar{y})}{\g}  + P(\bar{y}).
 \end{equation}
 
Similarly, since $\bar{t}$ is a maximum point of the function $t \mapsto f^\e(\Bar{x},t) - \varphi^2(t)$, where
 $$\varphi^2(t) = f(\bar{y},\bar{s})+\frac{d_\M^2(\bar{x},\Bar{y})}{2\g} +\frac{\abs{t-\Bar{s}}^2}{2\g} +\b t,$$
which is of class $C^1$ and since $f^\e$ is a viscosity sub-solution to \eqref{chap4_eikonal-eq-discrete}, we get
\begin{equation}\label{chap4_super-ineq}
    \b + \frac{\bar{t}-\bar{s}}{\g}  \leq - \abs{\nabla^{-}_{\eta_\e}f^\e(\bar{x},\bar{t})}_\infty + \widetilde{P}(\bar{x}).
    \end{equation}
On the other hand, since $(\bar{x},\bar{t},\bar{y},\bar{s})$ is a maximizer of $\Psi_{\gamma,\beta}$, we have for any $z \in \widetilde{\M}$
\begin{equation}\label{chap4_err-bd-f-d}
\Psi_{\gamma,\beta}(\bar{x},\bar{t},\bar{y},\bar{s}) - \Psi_{\gamma,\beta}(z,\bar{t},\bar{y},\bar{s}) = f^\e(\bar{x},\bar{t}) - f^\e(z,\bar{t}) - \frac {d_\M^2(\bar{x},\bar{y}) - d_\M^2(z, \bar{y})}{2\g} \geq 0 .
\end{equation}

By \ref{chap4_assum:dis-reg} and \eqref{chap4_ineq:reg dist,time}, we get that $\Bar{y} \notin Cut\{\Bar{x}\}$ and so  there exists a unique geodesic $c$ such that $c(0)=\Bar{x}$, $c(1)=\Bar{y}$ and $L(c)=d_\M(\Bar{x},\Bar{y})$. Let $0 \leq \bar{r}_\eta \leq r_\eta$ such that $C_\eta = \bar{r}_\eta \eta (\bar{r}_\eta)$.
We define $\Bar{z}=c(s)$ such that 
$d_{\M}(\Bar{x},\bar{z})=\e \bar{r}_\eta$, for $s\in [0,1]$.  In particular $d_\M(\Bar{x},\Bar{y})=d_\M(\Bar{x},\Bar{z})+d_\M(\Bar{z},\Bar{y})$. Define $\lambda \in ]0,1]$ such that $d_\M(\Bar{x},\Bar{z})=\lambda d_\M(\Bar{x},\Bar{y})$. We then have  
\begin{equation*}\label{chap4_err-bd-d(z,y)-d(x,y)}
    d_\M(\Bar{z},\Bar{y})=(1-\lambda) d_\M(\Bar{x},\Bar{y}).
\end{equation*}

We now fix $\tilde{z} \in \widetilde{\M}$ such that $d_\M(\Bar{z},\tilde{z})\leq a \e^{1+\nu} / 8$ (see \ref{chap4_assum:compatdomains}). Using  \eqref{chap4_err-bd-f-d}, we then have
\begin{equation}
    \begin{aligned}\label{chap4_err-bd-f-d-1}
        2\gamma \pa{f^\e(\bar{x},\Bar{t})-f^\e(\tilde{z},\Bar{t})}
        &\geq d_\M^2(\bar{x},\bar{y}) - d_\M^2(\tilde{z}, \bar{y}) \\
        &\geq d_\M^2(\bar{x},\bar{y}) - \pa{d_\M(\tilde{z}, \bar{z})+d_\M(\Bar{z},\Bar{y})}^2 \\
        &= d_\M^2(\bar{x},\bar{y}) - d_\M^2(\bar{z},\bar{y}) - d_\M^2(\bar{z},\tilde{z}) - 2d_\M(\bar{z},\bar{y})d_\M(\bar{z},\tilde{z})  \\
        &= d_\M^2(\bar{x},\bar{y}) - (1-\lambda)^2 d_\M^2(\bar{x},\bar{y})- d_\M^2(\bar{z},\tilde{z}) - 2d_\M(\bar{z},\bar{y})d_\M(\bar{z},\tilde{z})\\
        &= 2\lambda  d_\M^2(\bar{x},\bar{y}) - \lambda^2  d_\M^2(\bar{x},\bar{y})-d_\M^2(\bar{z},\tilde{z}) - 2d_\M(\bar{z},\bar{y})d_\M(\bar{z},\tilde{z}).
    \end{aligned}
\end{equation}

It then follows that 
 \begin{equation} \label{chap4_ineq-2-ct}
     \begin{aligned}
     |\nabla_{\eta_\e}^{-}f^\e(\bar{x},\bar{t})|_\infty
    &\geq  J_\e(\bar{x},\tilde{z}) (f^\e(\bar{x},\bar{t})-f^\e(\tilde{z},\bar{t})) \\
    &=  J_\e(\bar{x},\bar{z}) (f^\e(\bar{x},\bar{t})-f^\e(\tilde{z},\bar{t})) + \pa{J_\e(\bar{x},\tilde{z}) -J_\e(\Bar{x},\bar{z})}(f^\e(\bar{x},\bar{t})-f^\e(\tilde{z},\bar{t}))\\
     &\geq \gamma^{-1}\underset{\mathrm{T}_1}{\underbrace{  \lambda  J_\e(\Bar{x},\bar{z})  d^2_\M(\Bar{x},\Bar{y}) }} -  (2\gamma)^{-1}\underset{\mathrm{T}_2}{\underbrace{ \lambda^2 d_\M^2(\bar{x},\Bar{y}) J_\e(\Bar{x},\bar{z})}} \\
     &\quad - (2\gamma)^{-1} \underset{\mathrm{T}_3}{\underbrace{ d^2_\M(\Bar{z},\tilde{z} ) J_\e(\Bar{x},\bar{z})}} - \gamma^{-1} \underset{\mathrm{T}_4}{\underbrace{ d_\M(\bar{z},\bar{y})d_\M(\bar{z},\tilde{z})J_\e( \Bar{x},\Bar{z})}}\\
     &\quad +\underset{\mathrm{T}_5}{\underbrace{\pa{J_\e(\bar{x},\tilde{z}) -J_\e(\Bar{x},\bar{z})}(f^\e(\bar{x},\bar{t})-f^\e(\tilde{z},\bar{t}))}}.
\end{aligned}
\end{equation}

We now treat each term $\mathrm{T_1},\ldots,\mathrm{T_5}$. For $\mathrm{T_1}$, using $C_\eta= \max_{0\leq t \leq r_\eta} t \eta(t)=\bar{r}_\eta \eta(\bar{r}_\eta)$, we have
\begin{equation}\label{chap4_err-bd-T1}
     \begin{aligned}
        \mathrm{T_1} &=\lambda d_\M(\Bar{x},\Bar{y}) J_\e(\Bar{x},\bar{z})  d_\M(\Bar{x},\Bar{y}) \\
        &= (\e C_\eta)^{-1} \eta \pa{\frac{\widetilde{d}(\bar{x},\bar{z})}{\e}} d_\M(\bar{x},\bar{z}) d_\M(\bar{x},\bar{y})\\
        &\geq (\e C_\eta)^{-1} \pa{ \eta \pa{\frac{d_\M(\bar{x},\bar{z})}{\e}} - C_\M\Lip{\eta} \e^{\xi} } d_\M(\bar{x},\bar{z}) d_\M(\bar{x},\bar{y}) \\
        &= C_\eta^{-1} \eta(\bar{r}_\eta)\bar{r}_\eta d_\M(\bar{x},\bar{y}) -C_\eta ^{-1} \bar{r}_\eta C_\M \Lip{\eta}  \e^{\xi} d_\M(\bar{x},\bar{y}) \\
        &= d_\M(\bar{x},\bar{y}) - K  \e^{\xi} \gamma.
    \end{aligned}
\end{equation}
Likewise, we have
\begin{equation}\label{chap4_err-bd-T2}
     \begin{aligned}
        \mathrm{T_2} &=  J_\e(\Bar{x},\bar{z}) \lambda^2 d_\M^2(\bar{x},\Bar{y}) \\
        &= (\e C_\eta)^{-1} \eta \pa{\frac{\widetilde{d}(\bar{x},\bar{z})}{\e}}d^2_\M(\bar{x},\bar{z}) \\
        &\leq (\e C_\eta)^{-1} \eta \pa{\frac{\widetilde{d}(\bar{x},\bar{z})}{\e}} \pa{\widetilde{d}(\Bar{x},\bar{z}) + C_\M \e^{1+\xi}} d_\M(\bar{x},\bar{z})\\
        &\leq  d_\M(\Bar{x},\Bar{z}) + K  \e^{\xi} d_\M(\bar{x},\bar{z})\\
        &\leq \e r_\eta + K \e^{1+\xi}\\
        &\leq K \e.
    \end{aligned}
\end{equation}
Now we turn to bound $\mathrm{T_3}$. Using the definition of $\tilde{z}$ and the fact that $\eta$ is bounded, we have
\begin{equation}\label{chap4_err-bd-T3}
    \begin{aligned}
        \mathrm{T_3} &=d^2_\M(\Bar{z},\tilde{z} ) J_\e(\Bar{x},\bar{z}) \\
        &\leq K (\e C_\eta)^{-1}\eta\pa{\frac{\widetilde{d}(\bar{x},\bar{z})}{\e}} \e^{2+2\nu} \\
        &\leq K \e^{1+2\nu}.
    \end{aligned}
\end{equation}
To bound $\mathrm{T_4}$, we have, using \eqref{chap4_ineq:reg dist,time}, the definition of $\tilde{z}$ and the fact that $\eta$ is bounded, 
\begin{equation}\label{chap4_err-bd-T4}
    \begin{aligned}
        \mathrm{T_4} &= d_\M(\bar{z},\bar{y})d_\M(\bar{z},\tilde{z})J_\e( \Bar{x},\Bar{z}) \\
        &\leq K(1-\lambda)d_\M(\bar{x},\bar{y}) \e^{1+\nu} \e^{-1 }\\
        &\leq K \gamma \e^{\nu}.
    \end{aligned}
\end{equation}
We now turn to $\mathrm{T_5}$. Using  \eqref{chap4_eq:globlip-space-J}, \eqref{chap4_ineq:reg dist,time}, \ref{chap4_eta:lip} and  \ref{chap4_assum:compatdomains}, we have
\begin{equation} \label{chap4_err-bd-T5}
    \begin{aligned}
        |\mathrm{T_5}| &= (\e C_\eta)^{-1} \left|\eta\pa{\frac{\widetilde{d}(\bar{x},\tilde{z})}{\e}}  - \eta\pa{\frac{\widetilde{d}(\bar{x},\bar{z})}{\e}} \right|. |f^\e(\bar{x},\bar{t})-f^\e(\tilde{z},\bar{t})| \\
        &\leq K \e^{-2} |\widetilde{d}(\bar{x},\tilde{z})-\widetilde{d}(\bar{x},\bar{z})| (d_\M(\bar{x},\tilde{z})+\e) \\
        &\leq K \e^{-2}  (d_\M(\bar{z},\tilde{z})+C_\M\e^{1+\xi})(d_\M(\bar{x},\bar{z})+d_\M(\bar{z},\tilde{z})+ \e) \\
        &\leq K \e^{-2} (K\e^{1+\nu}+C_\M\e^{1+\xi})(\e r_\eta + K\e^{1+\nu} + \e )\\
        &\leq K\e^{\min{(\nu,\xi)}}.
    \end{aligned}
\end{equation}
Injecting \eqref{chap4_err-bd-T1}, \eqref{chap4_err-bd-T2}, \eqref{chap4_err-bd-T3}, \eqref{chap4_err-bd-T4} and \eqref{chap4_err-bd-T5} into \eqref{chap4_ineq-2-ct}, we arrive at 
\begin{equation*}
    |\nabla_{\eta_\e}^{-}f^\e(\bar{x},\bar{t})|_\infty
    \geq \frac{d_\M(\Bar{x},\Bar{y})}{\gamma}-K\pa{\frac{\e}{\gamma} +\e^{\min{(\nu,\xi)}}}.
\end{equation*}
Injecting this bound into \eqref{chap4_super-ineq} and combining with \eqref{chap4_sub-ineq}, we deduce that if $(\bar x,\bar t)\in (\widetilde \M \setminus \mathcal N^\a_\Gamma) \times ]0,T[$ and $(\bar y,\bar s)\in (\M\setminus \mathcal N^\a_\Gamma) \times ]0,T[$, then
\begin{equation*}
\begin{aligned}
\beta 
&< 2 K \pa{\e^{\min{(\nu,\xi)}} + \frac{\e}{\gamma}} + \widetilde{P}(\bar{x}) - P(\bar{y}) \nonumber\\
&\leq K \pa{\e^{\min{(\nu,\xi)}} + \frac{\e}{\gamma}} + \Lip{P} d_\M(\bar{x},\bar{y }) + \normL{P - \widetilde{P}}_{L^\infty(\widetilde{\M} \setminus \widetilde{\Gamma})} \nonumber\\
&\leq K \pa{\e^{\min{(\nu,\xi)}} + \frac{\e}{\gamma}} + \Lip{P} K \gamma + \normL{P - \widetilde{P}}_{L^\infty(\widetilde{\M} \setminus \widetilde{\Gamma})} \nonumber\\
&\leq K \pa{\e^{\min{(\nu,\xi)}} + \frac{\e}{\gamma}+ \gamma}+\normL{P - \widetilde{P}}_{L^\infty(\widetilde{\M} \setminus \widetilde{\Gamma})} \eqdef \bar{\beta} \label{chap4_eq:etabar} ,
\end{aligned}
\end{equation*}
for large enough constant $K > 0$, where we used \ref{chap4_assum:gamma} and \ref{chap4_assum:P} in the second inequality and  estimate \eqref{chap4_ineq:reg dist,time} in the third one. Then we conclude that for $\beta \geq \bar{\beta}$ either $(\bar{x},\bar{t}) \in \mathcal N^\a_\Gamma\times [0,T[\ \cup\  \widetilde \M\times \{0\}$  or $(\bar{y},\bar{s}) \in \mathcal N^\a_\Gamma\times [0,T[\ \cup\  \M\times \{0\}$.\medskip

\item {\em Conclusion.\\} \label{chap4_step:proofconclusion}
We take $\beta \ge \bar\beta$. Assume first that $(\bar{y},\bar{s}) \in \mathcal N^\a_\Gamma\times [0,T[\ \cup\  \M\times \{0\}$. If $\bar s=0$, then 
\begin{align*}
\Psi_{{\gamma,\beta}}(\bar{x},\bar{t},\bar{y},\bar{s}) 
&\leq f^\e(\bar{x},\bar{t}) - f_0(\bar{y}) \\
&= (f^\e(\bar{x},\bar{t}) - f^\e(\bar{x},0)) + (f_0^{\e}(\bar{x}) - f_0(\bar{x})) + (f_0(\bar{x}) - f_0(\bar{y})) \\
&\leq K \bar{t} + \normL{f_0^\e-f_0}_{L^\infty(\widetilde \M)} + \Lip{f_0}d_\M(\bar{x},\bar{y})\\
&\leq K(\beta+1) \gamma + \norm{f_0^\e-f_0}_{L^\infty(\widetilde \M)},
\end{align*}
where, in the second inequality, we used \eqref{chap4_lip-t-ct} in Theorem~\ref{chap4_lip-viscosity-J} to get the first term, and \ref{chap4_assum:M} and \ref{chap4_assum:f_0} to get the last two terms. In the last inequality, we invoked \eqref{chap4_ineq:reg dist,time}. In the same way, if $\bar{y} \in \mathcal N_\Gamma^\a$ and $\bar s>0$, let $\tilde y \in \proj_{\tilde{\Gamma}}(\bar{y})$, i.e.,
\[
d_\M(\bar y ,\tilde y)= d_\M(\bar y,\widetilde \Gamma) \le \distH(\Gamma, \widetilde \Gamma)+\a .
\]

Such $\tilde y$ exists by closedness of $\widetilde{\Gamma}$, see \ref{chap4_assum:gamma}. Since \eqref{chap4_eq:compatdomains} is in force under \ref{chap4_assum:compatdomains}, \eqref{chap4_eq:globlip-space-J} holds (see Theorem~\ref{chap4_lip-viscosity-J} and Lemma~\ref{chap4_lip-viscosity-J-1}). Using this with \ref{chap4_assum:f_0} and \eqref{chap4_ineq:reg dist,time}, we obtain
\begin{equation}\label{chap4_eq:bndxonGamma}
\begin{aligned}
\Psi_{{\gamma,\beta}}(\bar{x},\bar{t},\bar{y},\bar{s}) 
&\leq f^\e(\bar{x},\bar{t}) - f_0(\bar{y}) \\
&= (f^\e(\bar{x},\bar{t}) - f^\e(\tilde{y},\bar{t})) + (f_0^\e(\tilde{y}) - f_0(\tilde{y})) + (f_0(\tilde{y}) - f_0(\bar{y})) \\
&\leq K (d_\M(\bar{x},\tilde{y}) +\e)+ \normL{f_0^\e-f_0}_{L^\infty(\widetilde \M)} + \Lip{f_0} d_\M(\tilde{y},\bar{y}) \\
&\leq K(d_\M(\bar{x},\bar{y}) +\e)+ \normL{f_0^\e-f_0}_{L^\infty(\widetilde \M)} + Kd_\M(\bar{y},\tilde{y}) + \Lip{f_0} d_\M(\bar{y},\tilde{y}) \\
&\leq K (\gamma +\e) + \norm{f_0^\e-f_0}_{L^\infty(\widetilde \M)}+ K(\distH(\Gamma,\widetilde \Gamma) +\a).
\end{aligned}
\end{equation}
We conclude that for all $(\bar{y},\bar{s}) \in \mathcal N^\a_\Gamma\times [0,T[\ \cup\  \M\times \{0\}$, and for $\beta \ge \bar{\beta}$, we have
\[
\Psi_{{\gamma,\beta}}(\bar{x},\bar{t},\bar{y},\bar{s})\leq K (\gamma +\e)  + \norm{f_0^\e-f_0}_{L^\infty(\widetilde \M)}+ K ( \distH(\Gamma,\widetilde \Gamma)+\a)+K \beta \gamma .
\]

The same bound holds for $(\bar{x},\bar{t}) \in \pa{\mathcal N^\a_\Gamma \cap \widetilde\M}\times [0,T[\ \cup\  \widetilde \M \times \{0\}$ when $\beta \geq \bar{\beta}$. Indeed, if $\bar{t}=0$ then
\begin{align*}
\Psi_{{\gamma,\beta}}(\bar{x},\bar{t},\bar{y},\bar{s}) 
&\leq f_0^\e(\bar{x}) - f(\bar{y},\bar{s}) \\
&= (f_0^\e(\bar{x}) - f_0(\bar{x})) + (f_0(\bar{x}) - f_0(\bar{y})) + (f(\bar{y},0) - f(\bar{y},\bar{s})) \\
&\leq \normL{f_0^\e - f_0}_{L^\infty(\widetilde \M)} + \Lip{f_0} d_\M(\bar{x},\bar{y}) + K\bar{s} \\
&\leq K(\beta+1) \gamma + \normL{f_0^\e - f_0}_{L^\infty(\widetilde \M)},
\end{align*}
where we have now invoked \eqref{chap4_lip-t-ct-1} in Theorem~\ref{chap4_lip-viscosity}. If $\bar{x} \in \pa{\mathcal N^\a_\Gamma \cap \widetilde\M}$ and $\bar{t} > 0$, choose $\hat{x} \in \Gamma$ in the projection of $\bar x$ on $\Gamma$. Thus, using \eqref{chap4_eq:lipspace} in Theorem~\ref{chap4_lip-viscosity}, we arrive at
\begin{align*}
\Psi_{{\gamma,\beta}}(\bar{x},\bar{t},\bar{y},\bar{s}) 
&\leq f_0^\e(\bar{x}) - f(\bar{y},\bar{s}) \\
&= (f_0^\e(\bar{x}) - f_0(\bar{x})) + (f_0(\bar{x}) - f_0(\hat{x})) + (f(\hat{x},\bar{s}) - f(\bar{y},\bar{s})) \\
&\leq \normL{f_0^\e - f_0 }_{L^\infty(\widetilde \M)} + \Lip{f_0} d_\M(\bar{x},\hat{x}) + K d_\M(\hat{x},\bar{y}) \\
&\leq  \normL{f_0^\e - f_0}_{L^\infty(\widetilde \M)} + \Lip{f_0} \a + K(\a+\gamma) \\
&\leq K (\alpha+ \gamma) + \norm{f_0^\e - f_0}_{L^\infty(\widetilde \M)}.
\end{align*}
Thus, taking $\beta = \bar{\beta}$ and $(x,t) \in \widetilde{\M}_{T}$ we have from above that,
\begin{align*}
f^\e(x,t) - f(x,t) - \bar{\beta}T \leq  &\Psi_{{\gamma,\beta}}(\bar{x},\bar{t},\bar{y},\bar{s}) \\
\leq & K (\gamma +\e)  + \norm{f_0^\e - f_0}_{L^\infty(\widetilde \M)} + K (\distH(\Gamma,\widetilde \Gamma)+\a)+ K \bar{\beta} \gamma.
\end{align*}
Before concluding, we look at what happens when we revert the role of $f$ and $f^\e$. In this case, our reasoning remains valid with only a few changes. The main ingredient is to redefine $\Psi_{{\gamma,\beta}}$ as follows
\[
\Psi_{\gamma,\beta}(x,t,y,s) = f(y,s) - f^\e(x,t) - \frac{d^2_\M(x,y)}{2\gamma} -\frac{|t-s|^2}{2\gamma} - \beta t.
\]
Then all our bounds remain true and with even  simpler arguments (see the proof of Theorem \ref{chap4_thm:discrete-fw-estimate} which used the same kind of arguments and where we revert the role of $f$ and $f^\e$). We leave the details to the reader for the sake of brevity.
Overall, we have shown that
\[ 
|f^\e(x,t) - f(x,t)| \leq  K (\gamma +\e)  + \norm{f_0^\e - f_0}_{L^\infty(\widetilde \M)} + K( \distH(\Gamma,\widetilde \Gamma)+\a)+ K \bar{\beta} (\gamma+T).
\]
With the optimal choice $\gamma = \e^{1/2}$, taking the supremum over $(x,t)$ and after rearrangement, we get (recalling that $\a = \e^{1/2}$)
\begin{align*}
\normL{f^\e-f}_{L^\infty(\widetilde{\M} \times  [0,T[)} \leq&  K\pa{(T+1)\e^{\min(\nu,\xi,1/2)} + \e} + K (T+\e^{1/2})\normL{P - \widetilde{P}}_{L^\infty(\widetilde{\M} \setminus \widetilde{\Gamma})} \\
&+ \normL{f_0^\e - f_0}_{L^\infty(\widetilde{\M})}+ K( \distH(\Gamma,\widetilde \Gamma)+ \e^{1/2}),
\end{align*}
which implies the claimed bound.

\end{enumerate}
\end{proof}

 \subsection{Forward Euler discrete time non-local to local error bound}\label{chap4_subsec:eikconvdiscrete}

In this section, we consider the time-discrete approximation of \eqref{chap4_eikonal-eq-discrete} using Forward Euler discretization. Then we show an error estimate between this approximation and the viscosity solution to \eqref{chap4_eikonal-eq}.

For a time interval $[0,T[$ and $N_T \in \N$, we use the shorthand notation $\partial\widetilde{\M}_{N_T}=(\widetilde{\Gamma}\times \{t_1,\cdots,t_{N_T}\} \cup \widetilde\M \setminus \widetilde{\Gamma}\times \{0\})$. Using the Forward/Explicit Euler discretization scheme, a time-discrete counterpart of \eqref{chap4_eikonal-eq-discrete} is given by
\begin{equation}\tag{\textrm{$\mathcal{P}_{\e}^{\rm{FD}}$}}\label{chap4_eikonal-eq-discrete-fw}
\begin{cases}
\frac{f^\e(x,t) - f^\e(x,t-\Delta t)}{\Delta t} +|\nabla^-_{\eta_\e}f^\e(x,t-\Delta t)|_\infty = \widetilde{P}(x),  & (x,t) \in   (\widetilde{\M} \setminus \widetilde{\Gamma}) \times \set{t_1,\ldots,t_{N_T}} ,\\
f^\e(x,t) = f_0^\e (x), & (x,t) \in \partial\widetilde{\M}_{N_T} ,
\end{cases}
\end{equation}
where $t_{i} = i \Delta t$ for all $i \in \set{0,\ldots,N_T}$.

We include in Appendix \ref{chap4_sec:cauchy-J-discrete} the proof of the well-posedness of 
the equation \eqref{chap4_eikonal-eq-discrete-fw}.  Indeed, Lemma~\ref{chap4_lem-existence-lip-fw} shows the existence and regularity properties in time and space of a discrete-time solution of \eqref{chap4_eikonal-eq-discrete-fw} (in the sense of Definition~\ref{chap4_def:discretesolution}). The comparison principle given in Lemma~\ref{chap4_lem-comparison} provides uniqueness.\\

We are now in a position to state the following error estimate.

\begin{thm}[Error bound between the solutions to \eqref{chap4_eikonal-eq} and \eqref{chap4_eikonal-eq-discrete-fw}]\label{chap4_thm:discrete-fw-estimate}
Let $T>0$ and $\e \in ]0,\e_0]$ where $\e_0=\min(1,1/ {(2r_\eta)^2}, (a/(2C_\M))^{1/\xi})$. Suppose that assumptions~\ref{chap4_assum:M}--\ref{chap4_assum:compatdomains} hold, and that  $\distH(\Gamma,\widetilde \Gamma) = O(\e^{\min(\nu,\xi,1/2)})$. Let $f$  be the unique viscosity solution to \eqref{chap4_eikonal-eq} and $f^\e$ be the solution to \eqref{chap4_eikonal-eq-discrete-fw}. Assume also that
\begin{eqnarray}\label{chap4_cond-delta-t}
0 < \Delta t \leq \frac{\e C_\eta}{\sup_{t \in \R_+} \eta (t)}.
\end{eqnarray} 
Then, there exists a constant $K > 0$ depending only on $C_\M$, $\norm{f_0}_{L^\infty(\M)}$, $\norm{P}_{L^\infty(\M\setminus\Gamma)}$, $\Lip{f_0}$, $\Lip{f_
0^\e}$, $\Lip{P}$, $\Lip{\widetilde{P}}$,$\norm{\eta}_{L^\infty}$, $L_{\eta}$, $C_\eta$ and $c_\eta$ such that for any $\e$ small enough
\begin{align*}
\normL{f^\e-f}_{L^\infty\pa{\widetilde{\M} \times \set{0,\ldots,t_{N_T}}}}  
\leq& K(T+1)\pa{\e^{\min(\nu,\xi,1/2)} + \Delta t^{1/2} + \frac{\Delta t}{\e}
+ \normL{P - \widetilde{P}}_{L^\infty(\widetilde{\M}\setminus\widetilde{\Gamma})}} \\
&+ \normL{f_0^\e-f_0}_{L^\infty(\widetilde{\M})}.
\end{align*}
In particular, if $\widetilde{P}=P$ on $\widetilde{\M}\setminus\widetilde{\Gamma}$ and $f_0^\e=f_0$ on $\widetilde{\M}$, then  we have
\begin{align*}
\normL{f^\e-f}_{L^\infty\pa{\widetilde{\M} \times \set{0,\ldots,t_{N_T}}}}  
\leq& K(T+1)\pa{\e^{\min(\nu,\xi,1/2)} + \Delta t^{1/2} + \frac{\Delta t}{\e}
}.
\end{align*}
The fastest convergence rate in $\e$ is then achieved when $\Delta t = O(\e^{3/2})$, $\nu=1/2$ and $\xi=1/2$ provided that $\distH(\M,\widetilde \M) = O(\e^{3/2})$ and $\distH(\Gamma,\widetilde \Gamma) = O(\e^{1/2})$. 
\end{thm}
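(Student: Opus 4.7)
The plan is to adapt the doubling of variables argument used in Theorem~\ref{chap4_thm:continuous-time-estimate} to the Forward Euler setting. Consider the test function
\begin{equation*}
\Psi_{\gamma,\beta}(x,t_i,y,s) = f^\e(x,t_i) - f(y,s) - \frac{d_\M^2(x,y)}{2\gamma} - \frac{(t_i-s)^2}{2\gamma} - \beta t_i,
\end{equation*}
defined on $\widetilde\M \times \set{t_0,\ldots,t_{N_T}} \times \M_T$, and let $(\bar x, \bar t, \bar y, \bar s)$ denote a maximizer, with $\bar t$ on the time grid. Using the time Lipschitz regularity of $f^\e$ (from Lemma~\ref{chap4_lem-existence-lip-fw} in Appendix~\ref{chap4_sec:cauchy-J-discrete}) and of $f$ (Theorem~\ref{chap4_lip-viscosity}), an argument analogous to Lemma~\ref{chap4_lem:testfct-estimations} yields the localization $d_\M(\bar x,\bar y)\leq K\gamma$ and $\abs{\bar t-\bar s}\leq K(1+\beta)\gamma$. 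Boundary and initial configurations ($\bar s = 0$, $\bar t = 0$, $\bar y\in \mathcal{N}^\alpha_\Gamma$, or $\bar x\in \mathcal{N}^\alpha_\Gamma\cap\widetilde\M$, with $\alpha=\e^{1/2}$) are treated exactly as in the concluding step of Theorem~\ref{chap4_thm:continuous-time-estimate}, introducing the contributions $\normL{f_0^\e-f_0}_{L^\infty(\widetilde\M)}$ and $\distH(\Gamma,\widetilde\Gamma)$.

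In the interior case, the viscosity super-solution inequality for $f$ at $(\bar y,\bar s)$, combined with Lemma~\ref{chap4_lem:squared-distance} and Proposition~\ref{chap4_prop:dist-exp}, yields
\begin{equation*}
\frac{\bar t-\bar s}{\gamma} \geq -\frac{d_\M(\bar x,\bar y)}{\gamma} + P(\bar y),
\end{equation*}
as in Theorem~\ref{chap4_thm:continuous-time-estimate}. Since $f^\e$ does not satisfy a pointwise viscosity equation in time, we instead exploit that $\bar t$ maximizes the discrete map $t_i\mapsto \Psi_{\gamma,\beta}(\bar x,t_i,\bar y,\bar s)$: the inequality $\Psi_{\gamma,\beta}(\bar x,\bar t,\bar y,\bar s)\geq \Psi_{\gamma,\beta}(\bar x,\bar t-\Delta t,\bar y,\bar s)$, together with the Forward Euler update \eqref{chap4_eikonal-eq-discrete-fw} and the identity $(\bar t-\bar s)^2-(\bar t-\Delta t-\bar s)^2 = \Delta t\pa{2(\bar t-\bar s)-\Delta t}$, produces
\begin{equation*}
\beta + \frac{\bar t-\bar s}{\gamma} \leq \frac{\Delta t}{2\gamma} - \abs{\nabla^-_{\eta_\e}f^\e(\bar x,\bar t-\Delta t)}_\infty + \widetilde P(\bar x).
\end{equation*}

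The crucial step is bounding $\abs{\nabla^-_{\eta_\e}f^\e(\bar x,\bar t-\Delta t)}_\infty$ from below by $d_\M(\bar x,\bar y)/\gamma - K\pa{\e/\gamma+\e^{\min(\nu,\xi)}}$, reproducing the chain $\mathrm{T}_1,\ldots,\mathrm{T}_5$ of Step~2 of Theorem~\ref{chap4_thm:continuous-time-estimate}. Since that chain uses the spatial maximum property of $\Psi_{\gamma,\beta}$ at the \emph{current} time $\bar t$, we must bridge the time gap by inserting $f^\e(\cdot,\bar t)$: the resulting correction is $\abs{f^\e(z,\bar t)-f^\e(z,\bar t-\Delta t)}\cdot J_\e(\bar x,z) \leq L\Delta t\cdot \normL{\eta}_{L^\infty}/(\e C_\eta)=O(\Delta t/\e)$, via the time Lipschitz regularity of $f^\e$. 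Subtracting the two inequalities and using $d_\M(\bar x,\bar y)\leq K\gamma$ together with the Lipschitz regularity of $P$ gives the threshold
\begin{equation*}
\bar\beta = K\pa{\e^{\min(\nu,\xi)} + \frac{\e}{\gamma} + \gamma + \frac{\Delta t}{\gamma} + \frac{\Delta t}{\e} + \normL{P-\widetilde P}_{L^\infty(\widetilde\M\setminus\widetilde\Gamma)}}
\end{equation*}
beyond which the interior case is excluded. The symmetric argument swapping the roles of $f$ and $f^\e$ yields the reverse inequality. Choosing $\gamma=\max(\e^{1/2},\Delta t^{1/2})$ balances the terms $\e/\gamma+\gamma$ and $\Delta t/\gamma+\gamma$, producing the claimed rate; the monotonicity condition \eqref{chap4_cond-delta-t} ensures the well-posedness of \eqref{chap4_eikonal-eq-discrete-fw} throughout.

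The main obstacle is the absence of a genuine pointwise equation for $f^\e$ in time: the Forward Euler relation only links two consecutive grid values, so the non-local operator appears at $\bar t-\Delta t$ rather than $\bar t$. Bridging this time gap inside the kernel inevitably produces the $\Delta t/\e$ loss, while the $\Delta t/(2\gamma)$ contribution from the quadratic time penalty forces the balance $\gamma\sim\Delta t^{1/2}$ responsible for the $\Delta t^{1/2}$ convergence rate.
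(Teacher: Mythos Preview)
Your proposal is correct and follows essentially the same doubling-of-variables strategy as the paper. The one notable organizational difference is that the paper deliberately presents the \emph{reversed} direction in detail: it puts $f$ in the sub-solution position and $f^\e$ in the super-solution position, so that the non-local operator must be bounded from \emph{above} rather than from below. That upper bound is obtained directly from the spatial maximality inequality $f^\e(\bar y,\bar t_i)-f^\e(z,\bar t_i)\le (2\gamma)^{-1}\bigl(d_\M^2(\bar x,z)-d_\M^2(\bar x,\bar y)\bigr)$ and elementary triangle-inequality manipulations, without invoking the geodesic construction and the chain $\mathrm{T}_1,\ldots,\mathrm{T}_5$; the paper explicitly states that it does this ``to complete the details provided in the proof of Theorem~\ref{chap4_thm:continuous-time-estimate}'', where the lower bound direction was already written out. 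You instead redo the same direction as Theorem~\ref{chap4_thm:continuous-time-estimate}, reusing the $\mathrm{T}_1,\ldots,\mathrm{T}_5$ machinery verbatim and adding the $\Delta t/\e$ time-bridge. Both directions are needed to conclude, so the two write-ups are complementary rather than different in substance; your choice $\gamma=\max(\e^{1/2},\Delta t^{1/2})$ is equivalent (up to constants) to taking $\gamma\sim\sqrt{\e+\Delta t}$ and yields the same final rate.
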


\begin{proof}
The proof of this theorem is similar to the one of Theorem  \ref{chap4_thm:continuous-time-estimate}.  We just point out the steps where we need to correctly process the discrete time approximation. Moreover, we revert the role of $f$ and $f^\e$ to complete the details provided in the proof of Theorem~\ref{chap4_thm:continuous-time-estimate}. Therefore, we will need to use the Lipschitz regularity properties of $f^\e$ in time and space (see Lemma~\ref{chap4_lem-existence-lip-fw}). Again, $K$ will denote in this proof any positive constant that depends only on the data but may change from one line to another.

\begin{enumerate}[label=Step~\arabic*.]
\item {\em Test-function and maximum point.\\} 
For $\gamma > 0$ and $\beta > 0$, we consider maximizing over $\M_T \times \widetilde{\M}_{N_T}$ the test-function
\begin{eqnarray*}\label{chap4_varphi}
\Psi_{\gamma,\beta}(x,t,y,s)= f(x,t) - f^\e(y,s) - \frac{d^2_\M(x,y)}{2\gamma} -\frac{|t-s|^2}{2\gamma} - \beta t.
\end{eqnarray*}
 Exactly as in the proof of Lemma~\ref{chap4_lem:testfct-estimations}, the maximum is achieved at some point $(\bar{x},\bar{t}, \bar{y},\bar{t}_i) \in \M_T \times \widetilde{\M}_{N_T}$ and this maximum satisfies the following properties 
\begin{align}
d_\M(\bar{x},\bar{y}) \leq K \gamma  \qandq |\bar{t} - \bar{t}_i| \leq K(1+\beta) \gamma. \label{chap4_ineq:lip-pro}
\end{align}

\item {\em Excluding interior points from the maximum.}

\noindent We show that for $\beta$ large enough, we have either $(\bar{x},\bar{t}) \in \partial \M_T$ or $(\bar{y},\bar{t}_i) \in \partial \widetilde{\M}_{N_T}$. We argue again by contradiction and assume that $(\bar{x},\bar{t}) \in  \M\setminus \Gamma \times ]0,T[$  and $(\bar{y},\bar{t}_i) \in   \widetilde{\M}\setminus \widetilde \Gamma \times \{t_1,\dots, t_{N_T}\}$. The function $(x,t) \mapsto f(x,t) - \varphi^1(x,t)$, where 
$$\varphi^1(x,t) = f^\e(\bar{y},\Bar{t}_i) + \frac{d^2_\M(x,\bar{y})}{2\gamma} + \frac{|t-\Bar{t}_i|^2}{2\gamma} + \beta t,$$ reaches a maximum point at $(\bar{x},\bar{t})$. Using that $\varphi^1 $ is $C^1$ on a small neighborhood of $(\Bar{x}, \Bar{t})$, for $\gamma$ small enough, (see \eqref{chap4_ineq:lip-pro}, Assumption \ref{chap4_assum:dis-reg} and Remark \ref{chap4_rem:sq-dis-diff}), the fact that $f$ is a  viscosity sub-solution to \eqref{chap4_eikonal-eq} and the fact that  $\norm{\grad_{x}d^2_\M(\Bar{x},\Bar{y})}_{\bar{x}} = 2d_\M(\Bar{x},\Bar{y})$ (see Proposition \ref{chap4_prop:dist-exp} and Lemma \ref{chap4_lem:grad-dist}), we have
\begin{eqnarray}\label{chap4_1st-ineq}
\beta + \frac{\bar{t} - \bar{t}_i}{\gamma} \leq - \frac{d_\M(\bar{x},\bar{y})}{\gamma} + P(\bar{x}).
\end{eqnarray}
We have $\Psi_{\g,\b}(\bar{x},\Bar{t},\Bar{y},\Bar{t_i}) \geq \Psi_{\g,\b}(\bar{x},\Bar{t},\Bar{y},\Bar{t_i}-\Delta t)$ since $(\bar{x},\bar{t},\bar{y},\bar{t}_i)$ is a maximum point of $\Psi_{\gamma,\beta}$. This implies that  
\begin{eqnarray}\label{chap4_ineq-1}
\frac{f^\e(\bar{y},\bar{t}_i) - f^\e(\bar{y},\bar{t}_i- \Delta t) }{\Delta t} \leq \frac{\bar{t} - \bar{t}_i}{\gamma} + \frac{\Delta t}{2 \gamma}.
\end{eqnarray}
Similarly, we have $\Psi_{\g,\b}(\bar{x},\Bar{t},\Bar{y},\Bar{t_i}) \geq \Psi_{\g,\b}(\bar{x},\Bar{t},z,\Bar{t_i})$, hence we get 
\[
f^\e(\bar{y},\bar{t}_i) - f^\e(z,\bar{t}_i) \leq \frac {d^2_\M(\bar{x},z) - d^2_\M(\bar{x},\bar{y})}{2\g}, \qquad \forall z \in \widetilde{\M} .
\]

On the other hand, using that $\bar{t}_i > 0$ and that $f^\e$ is a solution to \eqref{chap4_eikonal-eq-discrete-fw}, see Definition \ref{chap4_def:discretesolution}, we have 
\begin{equation}\label{chap4_2nd-ineq}
\frac{f^\e(\bar{y},\bar{t}_i) - f^\e(\bar{y},\bar{t}_i-\Delta t)}{\Delta t} = - \babs{\nabla_{\eta_\e}^- f^\e(\bar{y},\bar{t}_i-\Delta t)}_\infty + \widetilde{P}(\bar{y}).
\end{equation} 

We now estimate the right hand side of \eqref{chap4_2nd-ineq} 
\begin{equation}\label{chap4_ineq-2}
\begin{aligned}
\babs{\nabla_{\eta_\e}^- f^\e(\bar{y},\bar{t}_i-\Delta t)}_\infty
&=  \max_{z\in\widetilde{\M},\widetilde{d}(\bar{y},z) \in \e\Sg}J_\e(\bar{y},z) (f^\e(\bar{y},\bar{t}_i-\Delta t) -f^\e(z,\bar{t}_i-\Delta t)) \\
&= \max_{z\in\widetilde{\M},\widetilde{d}(\bar{y},z) \in \e\Sg}J_\e(\bar{y},z) (f^\e(\bar{y},\bar{t}_i-\Delta t) - f^\e(\bar{y},\bar{t}_i) + f^\e(\bar{y},\bar{t}_i) - f^\e(z,\bar{t}_i) \\
&\quad + f^\e(z,\bar{t}_i) - f^\e(z,\bar{t}_i-\Delta t))\\
&\leq \max_{z\in\widetilde{\M},\widetilde{d}(\bar{y},z) \in \e\Sg} J_\e(\bar{y},z) (K \Delta t + f^\e(\bar{y},\bar{t}_i) -f^\e(z,\bar{t}_i)) \\
&\leq \max_{z\in\widetilde{\M},\widetilde{d}(\bar{y},z) \in \e\Sg} J_\e(\bar{y},z)\pa{ K \Delta t +(2\gamma)^{-1}\left( d^2_\M(\bar{x},z) - d_\M^2(\bar{x},\bar{y})\right)} \\
&\leq K\Delta t \max_{z\in\widetilde{\M},\widetilde{d}(\bar{y},z) \in \e\Sg} J_\e(\bar{y},z)
 + (2\gamma)^{-1}\max_{z\in\widetilde{\M},\widetilde{d}(\bar{y},z) \in \e\Sg} J_\e(\bar{y},z) \\
 &\quad \bpa{d_\M(\bar{x},z) - d_\M(\bar{x},\bar{y})}\bpa{d_\M(\bar{x},z) - d_\M(\bar{x},\bar{y}) + 2 d_\M(\bar{x},\bar{y})}  \\
&\leq K \frac{ \Delta t}\e \sup_{t \in \R_+}\eta(t)
+ (2\gamma)^{-1}\max_{z\in\widetilde{\M},\widetilde{d}(\bar{y},z) \in \e\Sg} J_\e(\bar{y},z){d_\M(\bar{y},z)}\bpa{d_\M(\bar{y},z) + 2 d_\M(\bar{x},\bar{y})}  \\
&\leq  K\frac{\Delta t}{\e} + \frac{d_\M(\bar{x},\bar{y})}{\gamma} \max_{z\in\widetilde{\M},\widetilde{d}(\bar{y},z) \in \e\Sg} (\e C_\eta)^{-1} \pa{\widetilde{d}(\bar{y},z) + K \e^{1+\xi}}\eta\pa{\frac{\widetilde{d}(\bar{y},z)}{\e}} \\
&\quad + (2\gamma)^{-1} \max_{z\in\widetilde{\M},\widetilde{d}(\bar{y},z) \in \e\Sg} ( \e C_\eta )^{-1} \pa{\widetilde{d}(\bar{y},z) + K \e^{1+\xi}}^2 \eta \pa{\frac{\widetilde{d}(\bar{y},z)}{\e}} \\
&\leq K\frac{\Delta t}{\e} + \frac{d_\M(\bar{x},\bar{y})}{\gamma} \pa{1+K\e^{\xi}} + (2\gamma)^{-1} \max_{z\in\widetilde{\M},\widetilde{d}(\bar{y},z) \in \e\Sg}  \Bigg( \widetilde{d}(\bar{y},z) \\
&\quad + K\e^{1+\xi} (\e C_\eta)^{-1} \widetilde{d}(\bar{y},z)\eta \pa{\frac{\widetilde{d}(\bar{y},z)}{\e}} + K \e^{2+2\xi}(\e C_\eta)^{-1} \eta \pa{\frac{\widetilde{d}(\bar{y},z)}{\e}}\Bigg) \\
&\leq  K\frac{\Delta t}{\e} + \frac{d_\M(\bar{x},\bar{y})}{\gamma} +K\e^{\xi} +  (2\gamma)^{-1} \pa{K\e + K \e^{1+\xi}  +K \e^{1+2\xi}} \\
&\leq  K \left(\frac{\Delta t}{\e} +\frac{\e}{\gamma} + \e^\xi\right) + \frac{d_\M(\bar{x},\bar{y})}{\gamma}.
\end{aligned}
\end{equation}


Plugging \eqref{chap4_ineq-1} and \eqref{chap4_ineq-2} into \eqref{chap4_2nd-ineq} we get

\begin{eqnarray}\label{chap4_3rd-ineq}
\frac{\bar{t} - \bar{t}_i}{\gamma} + \frac{\Delta t}{2 \gamma} \geq  -K \left(\frac{\Delta t}{\e} +\frac{\e}{\gamma} +\e^{\xi} \right)-  \frac{d_\M(\bar{x},\bar{y})}{\gamma} + \widetilde{P}(\bar{y}).
\end{eqnarray}

From \eqref{chap4_1st-ineq} and \eqref{chap4_3rd-ineq}, we finally obtain

\begin{eqnarray*}
\beta &\leq& K \pa{\frac{\Delta t + \e}{ \gamma}  + \frac{\Delta t}{\e} +\e^{\xi}} + P(\bar{x}) - \widetilde{P}(\bar{y})\\
&\leq&   K \pa{\frac{\Delta t + \e}{ \gamma}  + \frac{\Delta t}{\e} +\e^{\xi}} + K d_\M(\bar{x},\bar{y}) + \normL{P - \widetilde{P}}_{L^\infty(\widetilde{\M} \setminus \widetilde{\Gamma})} \\
&<& K \pa{\frac{\Delta t + \e}{ \gamma}  + \gamma + \frac{\Delta t}{\e} +\e^{\xi} }+\normL{P - \widetilde{P}}_{L^\infty(\widetilde{\M} \setminus \widetilde{\Gamma})} \eqdef \bar{\beta}.
\end{eqnarray*}
We then conclude that either $(\bar{x},\bar{t}) \in \partial \M_T$ or $(\bar{y},\bar{t}_i) \in \partial \widetilde{\M}_{N_T}$, for $\beta\ge \bar \beta$. When reverting the roles of $f^\e$ and $f$, only $\bar{\beta}$ will be changed taking the additional term $\e^{\min({\nu,\xi})}$ (see the proof of Theorem~\ref{chap4_thm:continuous-time-estimate}). We finally use the regularity properties of $f^\e$ (see  Lemma~\ref{chap4_lem-existence-lip-fw}) and of $f$ (see Theorem \ref{chap4_lip-viscosity}) to conclude, following \ref{chap4_step:proofconclusion} in the proof of Theorem \ref{chap4_thm:continuous-time-estimate}.

\end{enumerate}
\end{proof}

\section{Application to graph sequences}\label{chap4_sec:eikconvgraphs}

Let $G_n=(V_n,E_n,w_n)$ be a finite weighted graph where $V_n$ is the set of $n$ vertices $\{u_1,\cdots,u_n\}$, $E_n \subset V_n \times V_n$ is the set of edges and $w_n(u_i,u_j)$ is the weight of any edge $(u_i,u_j)$. The latter can be defined with a kernel function at scale $\e_n$ as $w_n(u_i,u_j)=(\e C_\eta)^{-1} \eta \pa{\dfrac{\widetilde{d}(u_i,u_j)}{\e}} $. 

Let $\Gamma_n \subset V_n$. For a time interval $[0,T[$ and $N_T \in \N$, we use the shorthand notation $(V_n\setminus \Gamma_n)_{N_T}=(V_n \setminus \Gamma_n) \times \set{t_1,\ldots,t_{N_T}}$ and $\partial(V_n)_{N_T}=(\Gamma_n \times \set{t_1,\ldots,t_{N_T}}) \cup V_n \times \set{0} $. We now consider the fully discretized Eikonal equation on $G_n$ with a forward Euler time-discretization as
\begin{equation}\tag{\textrm{$\mathcal{P}_{G_n}^{\rm{FD}}$}}\label{chap4_cauchy-graph-fw}
\begin{cases}
\frac{f^{n}(u,t) - f^{n}(u,t-\Delta t)}{\Delta t} + |\nabla ^{-}_{\eta_\e}f^n(u,t-\Delta t)|_\infty =  \widetilde{P}(u),  & (u,t) \in  (V_n\setminus \Gamma_n)_{N_T} ,\\
f^{n}(u,t) = f_0^n(u), & (u,t) \in \partial(V_n)_{N_T},
\end{cases}
\end{equation}
where $t_{i} = i \Delta t$ for all $i \in \set{0,\ldots,N_T}$. 

In the notation of \eqref{chap4_eikonal-eq-discrete-fw}, it is easy to identify $V_n$ with $\widetilde{\M}$ and $\Gamma_n$ with $\widetilde{\Gamma}$. We observe that by construction, $V_n$ and $\Gamma_n$ are compact sets and that $V_n \setminus \Gamma_n \subset \M \setminus \Gamma$. Our aim in this section is to establish consistency of solutions to \eqref{chap4_cauchy-graph-fw} as $n \to +\infty$ and $\Delta t \to 0$. \\

In practice, the construction of vertices $V_n$ in a graph is beyond our direct control. The specific arrangement of points may not be known, or the points can be obtained by sampling through an acquisition device (e.g., point clouds), or derived from a learning or modeling process (e.g., images). Consequently, it is more realistic to consider graphs $G_n$ on random point configurations $V_n$, and then conveniently estimate the probability of achieving a prescribed level of consistency as a function of $n$. 

To achieve this goal, we will consider a random graph model whose nodes are latent random variables independently and identically sampled on $\M$. This random graph model is inspired from \cite{bollobas2007phase} and is quite standard. More precisely, we construct $V_n$ and the boundary $\Gamma_n$ as follows:
\begin{defn}[Construction of $V_n$ and $\Gamma_n$]\label{chap4_def:randgraph}
Given a probability measure $\mu$ over $\M$ and $\e_n > 0:$ 
\begin{enumerate}
\item draw the vertices in $V_n$ as a sequence of independent and identically distributed variables $\pa{u_i}_{i=1}^n$ taking values in $\M$ and whose common distribution is $\mu$;
\item set $\Gamma_n = \set{u_i \in V_n:~ d_\M(u_i,\Gamma) \leq a\e_n^{1+\nu}/2}$, $\nu > 0$.
\end{enumerate}
\end{defn}

From now on, we assume that \\
\fbox{\parbox{0.975\textwidth}{
\begin{enumerate}[label=({\textbf{H.\arabic*}}),itemindent=5ex,start=16]
\item $\mu$ has a density $\rho$ on $\M$ with respect to the volume measure, and $\inf_{\M} \rho > 0$. \label{chap4_assum:densitylbd}
\end{enumerate}}} \\
{~}

Before stating the main result of this section, the following lemma gives a proper choice of $\e_n$ for which the construction of Definition~\ref{chap4_def:randgraph} ensures that the key assumption \ref{chap4_assum:compatdomains} is in force together with  $\Gamma_n \neq \emptyset$ and $\distH(\Gamma,\Gamma_n) = O(\e_n^{1+\nu})$ with high probability. To lighten notation, we define the event
\begin{equation}\label{chap4_eq:eventEn}
\En = \set{\text{\ref{chap4_assum:compatdomains} holds} \qandq \distH(\Gamma,\Gamma_n) \leq a\e_n^{1+\nu}/2} .
\end{equation}

Before stating the next Lemma, we need to introduce the following assumption on the radius $\d$ of the covering of $\M$   \\
\fbox{\parbox{0.975\textwidth}{
\begin{enumerate}[label=({\textbf{H.\arabic*}}),start=17]
\item  The radius of the covering of $\M$ satisfies $\delta < \min \left\{ \inj_g(\M),\frac{\pi}{\sqrt{r}},2\pi \right\}$, where $r$ is the infimum of the sectional curvature of $\M$ and $\inj_g$ is the injectivity radius of $(\M,g)$ and where we have set $\frac{\pi}{\sqrt{r}}=+\infty$ whenever $r\leq 0$. \label{chap4_assum:cover-num}
\end{enumerate}}}\\

\noindent The definitions of the sectional curvature and the injectivity radius of $\M$ are given in Appendix \ref{chap4_sec:prooflemcompatassum-graph}.
As for the examples mentioned in the introduction, they satisfy this assumption. In fact, the Euclidean sphere $\mathbb{S}^n$ possesses a constant positive sectional curvature equal to $1/R^2$, where $R$ is its radius. Therefore, the infimum of its sectional curvature $r$ is strictly positive. Thus assumption \ref{chap4_assum:cover-num} is satisfied. Furthermore, the hyperbolic manifold $\mathbb{H}^n$ has a constant negative sectional curvature, while the sectional curvature of the torus is identically zero, ensuring the validity of the condition on $\delta$ for this two examples. 

\begin{lem}\label{chap4_lem:compatassum-graph}
Let $V_n$ and $\Gamma_n$ generated according to Definition~\ref{chap4_def:randgraph} where $\mu$ satisfies \ref{chap4_assum:densitylbd}. Assume that $\delta$ satisfies assumption \ref{chap4_assum:cover-num}. Then, there exist two constants $K_1 > 0$ and $K_2 > 0$ that depend only on $a$, $\vol(\M)$ and $\vol(B_\M(0))$, and for any $\tau > 0$ there exists $n(\tau) \in \N$ such that for $n \geq n(\tau)$, taking

\begin{equation}\label{chap4_eq:epsn}
\e_n^{1+\nu} = K_1(1+\tau)^{1/m^*}\pa{\frac{\log n}{n}}^{1/m^*} ,
\end{equation}
the event $\En$ in \eqref{chap4_eq:eventEn} holds with probability at least $1 - K_2 n^{-\tau}$.
\end{lem}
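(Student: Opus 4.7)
The plan is to reduce both conditions defining $\En$ to the single uniform covering estimate
\[
\max_{x\in\M}d_\M(x,V_n)\le a\e_n^{1+\nu}/8,
\]
which I denote by $(\star)$, and then derive $(\star)$ by a covering-plus-concentration argument on $\M$. The reduction is direct: $(\star)$ is literally \ref{chap4_assum:compatdomains} with $\widetilde{\M}=V_n$. It also forces $\distH(\Gamma,\Gamma_n)\le a\e_n^{1+\nu}/2$: by construction of $\Gamma_n$ we have $\sup_{y\in\Gamma_n}d_\M(y,\Gamma)\le a\e_n^{1+\nu}/2$, while for any $x\in\Gamma\subset\M$, $(\star)$ yields some $u_i\in V_n$ with $d_\M(u_i,x)\le a\e_n^{1+\nu}/8$; hence $d_\M(u_i,\Gamma)\le a\e_n^{1+\nu}/8\le a\e_n^{1+\nu}/2$, so $u_i\in\Gamma_n$ and it witnesses $d_\M(x,\Gamma_n)\le a\e_n^{1+\nu}/8$. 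In particular $\Gamma_n\neq\emptyset$ as soon as $\Gamma\neq\emptyset$.

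For $(\star)$, I would set $r_n\eqdef a\e_n^{1+\nu}/16$ and pick a maximal $r_n$-separated family $\{x_1,\ldots,x_N\}\subset\M$, so that $\M=\bigcup_i B_\M(x_i,r_n)$. Then I would invoke \ref{chap4_assum:cover-num}: for $n$ large enough, $r_n$ falls below both the injectivity radius $\inj_g(\M)$ and the curvature-dependent threshold $\pi/\sqrt{r}$, so the Bishop--Gromov volume comparison theorem supplies a universal constant $c_\M>0$ (depending only on the sectional curvature lower bound and the Euclidean unit-ball volume $\vol(B(0,1))\subset\R^{m^*}$) such that
\[
\vol(B_\M(x,r_n))\ge c_\M\, r_n^{m^*},\quad\forall\,x\in\M.
\]
A standard packing argument together with the compactness of $\M$ (assumption \ref{chap4_assum:M}) then gives $N\le K\vol(\M)/r_n^{m^*}$.

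If every $B_\M(x_i,r_n)$ meets $V_n$, then each $x\in\M$ belongs to some $B_\M(x_i,r_n)$ and, by the triangle inequality, $d_\M(x,V_n)\le 2r_n=a\e_n^{1+\nu}/8$, i.e.\ $(\star)$ holds. Writing $c'=c_\M\inf_\M\rho>0$ (positive by \ref{chap4_assum:densitylbd}), a union bound yields
\[
\P\bpa{(\star)\text{ fails}}\le N\,(1-c' r_n^{m^*})^n\le N\exp(-c'\,n\,r_n^{m^*}).
\]
Substituting the ansatz $\e_n^{1+\nu}=K_1(1+\tau)^{1/m^*}(\log n/n)^{1/m^*}$ with $K_1$ sufficiently large gives $c'\,n\,r_n^{m^*}\ge(\tau+m^*(1+\nu)+1)\log n$, which absorbs the logarithmic factor coming from $N\sim 1/r_n^{m^*}$ and produces the announced bound $K_2 n^{-\tau}$. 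The lower bound $n\ge n(\tau)$ is exactly what makes $r_n$ small enough for \ref{chap4_assum:cover-num} to apply.

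The main obstacle is the volume-comparison step: the whole role of \ref{chap4_assum:cover-num} is to secure a constant $c_\M$ in $\vol(B_\M(x,r_n))\ge c_\M r_n^{m^*}$ that is \emph{uniform} in $x\in\M$. Outside the regime $r_n<\inj_g(\M)$, the map $\Exp_x$ may fail to be a diffeomorphism on $B(0,r_n)\subset T_x\M$, and for positively curved manifolds Bishop--Gromov degenerates once $r_n\ge \pi/\sqrt{r}$. Once this geometric step is in hand, the remainder is a routine covering-and-union-bound calculation, and the explicit dependence of the constants $K_1,K_2$ on $a$, $\vol(\M)$ and $\vol(B(0,1))$ follows by inspection.
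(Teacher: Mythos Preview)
Your approach is correct and essentially matches the paper's: both reduce to a covering-plus-union-bound argument, with the paper citing \cite{loubes2008kernel} for the covering-number bound that you derive via Bishop--Gromov and a packing argument, and deferring the Hausdorff-distance reduction to \cite{fadili2023limits}, which you carry out explicitly from $(\star)$. One minor slip in your closing paragraph: $K_1$ must be a \emph{fixed} constant independent of $\tau$ --- it is the $(1+\tau)^{1/m^*}$ factor already built into $\e_n$, not ``$K_1$ sufficiently large'', that gives $c'\,n\,r_n^{m^*}\ge(1+\tau)\log n$ and hence the $n^{-\tau}$ decay after absorbing $\log N\sim\log n$.
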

\begin{proof}
    The proof follows a similar approach as in \cite[Appendix D]{fadili2023limits}, with the exception that the covering number of $\M$ is distinct here. We will use again compactness of $\M$ and a covering argument with a finite $\delta$-net consisting of $N(\M,\delta)$ points.
    Let $S_\delta = \set{x_1,x_2,\ldots,x_{N(\M,\delta)}}$ be a $\delta$-net of $\M$ such that for all $x \in \M$, there exists $x_j \in S_\delta$ such that $d_\M(x,x_j)\leq \delta$, i.e., $\M \subseteq \bigcup_{x_j \in S_\delta} \ball_\M(x_j,\d)$. We then have, following the same lines as in the proof of \cite[Lemma 4.2]{fadili2023limits} that
    \begin{equation*}
    \Pr\pa{\max_{x \in \M} d_\M(x,V_n) > 2\delta} 
    \leq N(\M,\delta) \pa{1-c \delta^{m^*} \vol(\ball_\M(0))}^n.
    \end{equation*}
    Using the result obtained in \cite{loubes2008kernel}, we have that 
    \begin{equation*}
        N(\M,\delta) \leq C \vol(\M) \delta^{-m^*}.
    \end{equation*}
    We therefore arrive at the bound 
    \begin{align*}
        \Pr\pa{\max_{x \in \M} d_\M(x,V_n) > 2\delta}  
        &\leq C \vol(\M) \delta^{-m^*}\pa{1-c \delta^{m^*} \vol(\ball_\M(0))}^n \\
        &\leq C \vol(\M) \delta^{-m^*} e^{-nc\delta^{m^*}\vol(\ball_M(0))}.
    \end{align*}
    Take $\delta^{m^*} = \frac{(1+\tau)}{c\vol(\ball_\M(0)) }\frac{\log n}{n}$, for any $\tau > 0$. Thus the above bound becomes
    \begin{align*}
    \Pr\pa{\max_{x \in \Omega} d(x,V_n) > 2\delta} 
    &\leq C \vol(\M) \vol(\ball_\M(0)) e^{-(1+\tau)\log n - \log (1+\tau) - \log\log n + \log n} \\
    &\leq C \vol(\M) \vol(\ball_\M(0)) e^{-\tau\log n} \\
    &= C \vol(\M) \vol(\ball_\M(0)) n^{-\tau} .
    \end{align*}
    Thus, taking 
    \[
    \e_{n}^{1+\nu} =16 a^{-1}\pa{\frac{\pa{1+\tau}}{c\vol(\ball_\M(0)) }}^{1/m^*} \pa{\frac{\log n}{n}}^{1/m^*}, 
    \]
    we have $a\e_{n}^{1+\nu}/8 \geq \delta$, and therefore \ref{chap4_assum:compatdomains} holds with probability at least $1-K_2n^{-\tau}$.
    The proof of the estimation of the probability of the event
$\set{\distH(\Gamma,\Gamma_n) \leq a\e_n^{1+\nu}/2}$ aligns with that presented in \cite[Appendix D]{fadili2023limits}, and for brevity, we skip it.
\end{proof}

\noindent
We are now ready to establish a quantified version of uniform convergence in probability of $f^n$ towards $f$.

\begin{thm}[]\label{chap4_thm:graph-bw-estimate}
Let $T$, $\nu >0$, and $V_n$ and $\Gamma_n$ be constructed according to Definition~\ref{chap4_def:randgraph} where $\mu$ satisfies \ref{chap4_assum:densitylbd}. Suppose that assumptions~\ref{chap4_assum:M}-\ref{chap4_assum:compatdomains} and \ref{chap4_assum:cover-num} hold. Let $f$ be the unique viscosity solution to \eqref{chap4_eikonal-eq} and $f^{n}$ be a solution to \eqref{chap4_cauchy-graph-fw}. Take $\Delta t = \e_n^{1+\zeta}$ where $\e_n$ is as given in \eqref{chap4_eq:epsn}. Then, there exist  two constants $K > 0$ and $K_2 > 0$ that depend only on $C_\M$, $a$, $\diam(\M)$, $\norm{f_0}_{L^\infty(\M)}$, $\norm{P}_{L^\infty(\M\setminus\Gamma)}$, $\Lip{f_0}$, $\Lip{f_0^n}$, $\Lip{P}$, $\Lip{\widetilde{P}}$, $c_\eta$, $C_\eta$, $L_\eta$, $\norm{\eta}_{L^\infty}$ and $\nu$, and for any $\tau > 0$, there exists $n(\tau) \in \N$ such that for $n \geq n(\tau)$,
\begin{align*}
\normL{f^n-f}_{L^\infty\pa{V_n \times \set{0,\ldots,t_{N_T}}}}  
&\leq K(T+1)\Bigg((1+\tau)^{\frac{\min(\nu,\xi,1/2,\zeta)}{(1+\nu)m^*}}\pa{\frac{\log n}{n}}^{\frac{\min(\nu,\xi,1/2,\zeta)}{(1+\nu)m^*}} \\
&\qquad\qquad\qquad+ \normL{P - \widetilde{P}}_{L^\infty(V_n\setminus\Gamma_n)} \Bigg)
+ \normL{f_0^n-f_0}_{L^\infty(V_n)} .
\end{align*}
with probability at least $1-K_2 n^{-\tau}$. In particular, if $\e_n$ is chosen with $\tau > 1$, $\widetilde{P}=P$ on $V_n\setminus\Gamma_n$ and $f_0^n=f_0$ on $V_n$, then
\begin{align*}
\lim_{n \to +\infty} \normL{f^n-f}_{L^\infty\pa{V_n \times \set{0,\ldots,t_{N_T}}}}  = 0 \quad \text{almost surely}.
\end{align*}
The best convergence rate is $O\pa{\frac{\log n}{n}}^{\frac{1}{3m^*}}$ obtained for $\nu=1/2$, $\xi=1/2$ and $\zeta = 1/2$.
\end{thm}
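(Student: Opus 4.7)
The plan is to reduce the theorem to a direct application of the deterministic error bound of Theorem~\ref{chap4_thm:discrete-fw-estimate}, with the probabilistic input supplied by Lemma~\ref{chap4_lem:compatassum-graph}. I first identify the graph equation \eqref{chap4_cauchy-graph-fw} as the specialization of \eqref{chap4_eikonal-eq-discrete-fw} obtained by setting $\widetilde{\M} = V_n$ and $\widetilde{\Gamma} = \Gamma_n$. By the construction of Definition~\ref{chap4_def:randgraph}, $V_n$ is a finite (random) subset of $\M$, $\Gamma_n \subset V_n$ is closed, and $V_n \setminus \Gamma_n \subset \M \setminus \Gamma$, so assumptions \ref{chap4_asssum:tildeM} and \ref{chap4_assum:gamma} hold deterministically, and the other regularity hypotheses transfer from the standing assumptions of the theorem.

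Next I would invoke Lemma~\ref{chap4_lem:compatassum-graph} with the prescribed $\e_n$ given by \eqref{chap4_eq:epsn}. This delivers, on an event $\En$ of probability at least $1 - K_2 n^{-\tau}$, both the compatibility condition \ref{chap4_assum:compatdomains} and the bound $\distH(\Gamma,\Gamma_n) \leq a\e_n^{1+\nu}/2$; since $1+\nu \geq \min(\nu,\xi,1/2)$, this gives the Hausdorff estimate $\distH(\Gamma,\Gamma_n) = O\bpa{\e_n^{\min(\nu,\xi,1/2)}}$ required by Theorem~\ref{chap4_thm:discrete-fw-estimate}. The CFL-type constraint \eqref{chap4_cond-delta-t} reduces, with $\Delta t = \e_n^{1+\zeta}$, to $\e_n^{\zeta} \leq C_\eta / \sup \eta$, which holds for $n$ large enough. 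On $\En$ and for $n \geq n(\tau)$, all assumptions of Theorem~\ref{chap4_thm:discrete-fw-estimate} are in force and yield
\begin{equation*}
\normL{f^n - f}_{L^\infty(V_n \times \set{0,\ldots,t_{N_T}})} \leq K(T+1)\Bpa{\e_n^{\min(\nu,\xi,1/2)} + \e_n^{(1+\zeta)/2} + \e_n^{\zeta} + \normL{P - \widetilde{P}}_{L^\infty(V_n \setminus \Gamma_n)}} + \normL{f_0^n - f_0}_{L^\infty(V_n)}.
\end{equation*}

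Since $\e_n \leq 1$ and $(1+\zeta)/2 \geq \min(1/2,\zeta)$, all three $\e_n$-powers are dominated by $\e_n^{\min(\nu,\xi,1/2,\zeta)}$. Substituting $\e_n^{1+\nu} = K_1(1+\tau)^{1/m^*}(\log n / n)^{1/m^*}$ and raising to the power $\min(\nu,\xi,1/2,\zeta)/(1+\nu)$ yields the announced rate in $n$; the choice $\nu = \xi = \zeta = 1/2$ then optimizes the exponent to $1/(3m^*)$. For the almost sure statement, when $\widetilde{P}=P$ and $f_0^n = f_0$, I would apply the Borel-Cantelli lemma with any $\tau > 1$, so that $\sum_n K_2 n^{-\tau} < +\infty$, and conclude that $\En$ holds eventually almost surely, at which point the deterministic bound above converges to zero.

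The main obstacle is not in this final theorem itself but has already been absorbed into Lemma~\ref{chap4_lem:compatassum-graph}: controlling the fill distance $\max_{x \in \M} d_\M(x, V_n)$ via a manifold covering-number estimate using \ref{chap4_assum:cover-num} (so that geodesic balls of radius $\leq \delta$ have the expected $m^*$-dimensional volume, with $\Exp_x$ a diffeomorphism and no cut-locus pathologies), together with the density bound \ref{chap4_assum:densitylbd}. These ingredients are exactly what dictate the $(\log n / n)^{1/m^*}$ scaling and hence the final exponent $\min(\nu,\xi,1/2,\zeta)/((1+\nu)m^*)$; the remainder of the argument is a careful but mechanical substitution into the previously established bound.
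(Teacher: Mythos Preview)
Your proposal is correct and follows essentially the same route as the paper, which simply states that the proof is similar to that of \cite[Theorem~4.3]{fadili2023limits} and omits the details. You have reconstructed precisely that argument: specialize \eqref{chap4_eikonal-eq-discrete-fw} to $\widetilde{\M}=V_n$, $\widetilde{\Gamma}=\Gamma_n$, invoke Lemma~\ref{chap4_lem:compatassum-graph} to place yourself on the good event, apply Theorem~\ref{chap4_thm:discrete-fw-estimate}, simplify the $\e_n$-powers, and conclude almost sure convergence via Borel--Cantelli for $\tau>1$.
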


\begin{proof}
The proof of this theorem is similar to the one of \cite[Theorem 4.3]{fadili2023limits} and we skip it.
\end{proof}


\appendix



\section{Well-posedness and regularity properties of \eqref{chap4_eikonal-eq-discrete-fw}} 
\label{chap4_sec:cauchy-J-discrete}

We recall the notions of discrete sub- and super-solution defined in \cite{fadili2023limits}. 
\begin{defn}[Discrete sub- and super-solution]\label{chap4_def:discretesolution}
We say that $f^\e$ is a sub-solution to \eqref{chap4_eikonal-eq-discrete-fw} if for all $(x,t) \in   (\widetilde \M \setminus\widetilde  \Gamma) \times \set{t_1,\ldots,t_{N_T}}$
\[
\frac{f^\e(x,t) - f^{\e}(x,t-\Delta t)}{\Delta t} + \abs{\nabla_{\eta_\e}^- f^{\e}(x,t-\Delta t)}_\infty \leq \widetilde{P}(x) ,
\]
and if for all $(x,t) \in \partial \widetilde \M_{N_T}$,
\[
f^{\e}(x,t) \le f_0^\e(x) .
\]
In the same way, we say that $f^\e$ is a super-solution to \eqref{chap4_eikonal-eq-discrete-fw} if for all $(x,t) \in  (\widetilde \M \setminus\widetilde  \Gamma) \times \set{t_1,\ldots,t_{N_T}}$
\[
\frac{f^\e(x,t) - f^{\e}(x,t-\Delta t)}{\Delta t} + \abs{\nabla_{\eta_\e}^- f^{\e}(x,t-\Delta t)}_\infty \geq \widetilde{P}(x) ,
\]
and if for all $(x,t) \in \partial \widetilde \M_{N_T}$,
\[
f^{\e}(x,t) \ge f_0^{\e}(x).
\]
$f^\e$ is a discrete solution to \eqref{chap4_eikonal-eq-discrete-fw} if it is both a discrete sub-solution and super-solution.
\end{defn}

We recall the comparison principle, proved in \cite[Lemma B.2]{fadili2023limits}

\begin{lem}[Comparison principle for the scheme \eqref{chap4_eikonal-eq-discrete-fw}]\label{chap4_lem-comparison}
Assume that \ref{chap4_assum:M}-\ref{chap4_assum:gamma} and \ref{chap4_etapos} hold, and that $f^\e,g^\e$ are respectively bounded sub- and super-solution to \eqref{chap4_eikonal-eq-discrete-fw}. Assume also that the CFL condition \eqref{chap4_cond-delta-t} holds. Then
\begin{eqnarray}
\sup_{\widetilde{\M} \times  \set{0,\ldots,t_{N_T}}}\pa{f^\e - g^\e} \leq \sup_{ \widetilde{\Gamma} \times  \set{t_1,\ldots,t_{N_T}} \cup \widetilde{\M} \times \set{0}} |f^\e - g^\e |.
\end{eqnarray}
\end{lem}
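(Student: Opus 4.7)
The plan is to exploit the fact that, under the CFL condition \eqref{chap4_cond-delta-t}, the forward Euler scheme can be recast as a pointwise monotone, translation-equivariant operator on $\widetilde{\M}$. Once this is observed, the comparison estimate follows by a one-line induction on the time index, and no maximum-point analysis is needed.

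First, I rewrite the scheme. Using \eqref{chap4_gradientdiscret}, the discrete Hamiltonian reads $\abs{\nabla^-_{\eta_\e} F}_\infty(x) = \max_{y \in \widetilde{\M}} J_\e(x, y)(F(x) - F(y))$, so the sub-solution inequality of Definition~\ref{chap4_def:discretesolution} at any interior point $(x, t_i) \in (\widetilde{\M} \setminus \widetilde{\Gamma}) \times \set{t_1, \ldots, t_{N_T}}$ is equivalent to $f^\e(x, t_i) \le S_\e(f^\e(\cdot, t_{i-1}))(x)$, where
\[
S_\e(F)(x) \eqdef \min_{y \in \widetilde{\M}} \Bigl[(1 - \Delta t J_\e(x, y))\, F(x) + \Delta t J_\e(x, y)\, F(y)\Bigr] + \Delta t \widetilde{P}(x),
\]
and symmetrically $g^\e(x, t_i) \ge S_\e(g^\e(\cdot, t_{i-1}))(x)$ for the super-solution. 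The CFL condition gives $\Delta t J_\e(x, y) \le \Delta t \sup_{t \in \R_+} \eta(t) / (\e C_\eta) \le 1$ for every pair $(x, y)$, so each bracket is an honest convex combination of $F(x)$ and $F(y)$.

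Two elementary properties of $S_\e$ then follow at once. \emph{Monotonicity}: if $F \le G$ pointwise on $\widetilde{\M}$ then $S_\e(F) \le S_\e(G)$ pointwise, since each $y$-indexed convex combination is coordinatewise non-decreasing and $\min_y$ preserves monotonicity, while the additive term $\Delta t \widetilde{P}(x)$ does not depend on $F$. \emph{Translation-equivariance}: $S_\e(F + c) = S_\e(F) + c$ for every $c \in \R$, since convex combinations and $\min$ commute with additive constants. The comparison estimate is then obtained by induction on $i \in \set{0, \ldots, N_T}$. Set $M \eqdef \sup_{\partial \widetilde{\M}_{N_T}} \abs{f^\e - g^\e}$, which is finite by the boundedness assumption. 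The base case $i = 0$ is immediate since $\widetilde{\M} \times \set{0} \subset \partial \widetilde{\M}_{N_T}$. For the inductive step, assume $f^\e(\cdot, t_{i-1}) \le g^\e(\cdot, t_{i-1}) + M$ pointwise on $\widetilde{\M}$. For $x \in \widetilde{\Gamma}$, the boundary parts of Definition~\ref{chap4_def:discretesolution} give $f^\e(x, t_i) \le f_0^\e(x) \le g^\e(x, t_i)$, so $f^\e - g^\e \le 0 \le M$ at $(x, t_i)$. For $x \in \widetilde{\M} \setminus \widetilde{\Gamma}$, combining the rewritten scheme inequalities with the two properties of $S_\e$ yields
\[
f^\e(x, t_i) - g^\e(x, t_i) \le S_\e(f^\e(\cdot, t_{i-1}))(x) - S_\e(g^\e(\cdot, t_{i-1}))(x) \le S_\e(g^\e(\cdot, t_{i-1}) + M)(x) - S_\e(g^\e(\cdot, t_{i-1}))(x) = M.
\]
Taking the supremum over $(x, t_i) \in \widetilde{\M} \times \set{0, \ldots, t_{N_T}}$ delivers the claim.

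The only delicate point in this plan is the monotone reformulation of $S_\e$: it relies on reading \eqref{chap4_gradientdiscret} with the correct sign $F(x) - F(y)$ and on the CFL bound $\Delta t J_\e(x, y) \le 1$, which is precisely what turns each bracket into a convex combination. Everything else is routine bookkeeping, and as the authors remark in the excerpt, the resulting argument coincides with that of \cite[Lemma~B.2]{fadili2023limits}.
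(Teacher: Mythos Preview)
Your argument is correct and is exactly the standard monotone-scheme proof: rewrite the explicit step as a pointwise operator $S_\e$, observe that the CFL bound makes each bracket a convex combination so that $S_\e$ is monotone and commutes with constants, then induct on the time level. The paper does not give its own proof but defers to \cite[Lemma~B.2]{fadili2023limits}, where the same monotonicity/translation-equivariance argument is carried out; your write-up is essentially that proof.
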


We now establish the existence and the regularity properties of a discrete solution.

\begin{lem}[Existence and Lipschitz regularity properties in time and space for the scheme \eqref{chap4_eikonal-eq-discrete-fw}]\label{chap4_lem-existence-lip-fw}
Assume that assumptions~\ref{chap4_assum:M}-\ref{chap4_assum:f_0}, \ref{chap4_assum:tilde-d}-\ref{chap4_eta:dec} and \ref{chap4_assum:psisssol-J}--\ref{chap4_assum:compatdomains} hold. Then there exists a discrete solution  $f^\e$ to \eqref{chap4_eikonal-eq-discrete-fw} and for all $(x,y) \in \widetilde{\M} \times \widetilde{\M}$ and $t \in \set{t_1,\dots,t_{N_T}}$, the following holds
\begin{align}
\abs{f^\e(x,t) - f^\e(x,t-\Delta t)} &\leq L \Delta t, \label{chap4_lip-t-fw} \\
\abs{f^\e(x,t) - f^\e(y,t)} &\leq K\pa{d_\M(x,y)+\e} , \label{chap4_eq:globlip-space-J-fw}
\end{align}
where $L = \Lip{f_0^\e} + \norm{\widetilde{P}}_{L^\infty(\widetilde{\M}\setminus\widetilde{\Gamma})}$ and $K = 4 a^{-1} \max{\pa{(a+C_\M) \norm{\widetilde{P}}_{L^\infty(\widetilde{\M} \setminus \widetilde{\G})},c_\eta^{-1} C_\eta (L+ \norm{\widetilde{P}}_{L^\infty(\widetilde{\M} \setminus \widetilde{\G})})}}$.
\end{lem}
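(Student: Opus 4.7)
My plan is to mirror closely the proofs of Theorem \ref{chap4_lip-viscosity-J} and Lemma \ref{chap4_lip-viscosity-J-1} in the continuous-time setting, with the forward Euler structure simplifying several steps and the discrete comparison principle (Lemma \ref{chap4_lem-comparison}) playing the role of its continuous analogue. Existence of $f^\e$ is immediate: the scheme is explicit, so I can define $f^\e(x,t_i)=f_0^\e(x)$ on $\widetilde{\Gamma}$ and $f^\e(x,t_i)=f^\e(x,t_{i-1})+\Delta t[\widetilde{P}(x)-\abs{\nabla^-_{\eta_\e}f^\e(x,t_{i-1})}_\infty]$ otherwise, by induction on $i$, the right-hand side being well-defined since $\widetilde{\M}$ is finite.

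For \eqref{chap4_lip-t-fw}, I will first estimate $\abs{f^\e(x,t_1)-f_0^\e(x)}$: on $\widetilde{\Gamma}$ it vanishes, and on $\widetilde{\M}\setminus\widetilde{\Gamma}$ the scheme yields $\abs{f^\e(x,t_1)-f_0^\e(x)}\le \Delta t\bpa{\norm{\widetilde{P}}_{L^\infty(\widetilde{\M}\setminus\widetilde{\Gamma})}+\abs{\nabla^-_{\eta_\e}f_0^\e(x)}_\infty}$. A short computation relying on the $\Lip{f_0^\e}$-Lipschitz property of $f_0^\e$, the definition \eqref{chap4_def:C_eta} of $C_\eta$, and $|\widetilde{d}-d_\M|\le C_\M\e^{1+\xi}$ from \ref{chap4_assum:tilde-d} then bounds the non-local gradient of $f_0^\e$ by $\Lip{f_0^\e}(1+O(\e^\xi))$, so that $\abs{f^\e(x,t_1)-f_0^\e(x)}\le L\Delta t$ for $\e\le\e_0$. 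To propagate this bound to all time steps, I plan to introduce the shifted function $g^\e(x,t_i)=f^\e(x,t_{i+1})$ on $\widetilde{\M}\times\set{0,\ldots,t_{N_T-1}}$; because $f^\e\equiv f_0^\e$ on $\widetilde{\Gamma}\times\set{t_1,\ldots,t_{N_T}}$, $g^\e$ is itself a solution of the same scheme, with the same boundary data on $\widetilde{\Gamma}$ but initial data $f^\e(\cdot,t_1)$ instead of $f_0^\e$. Applying Lemma \ref{chap4_lem-comparison} twice (swapping the sub- and super-solution roles) will then give $\norm{f^\e-g^\e}_{L^\infty(\widetilde{\M}\times\set{0,\ldots,t_{N_T-1}})}\le\norm{f_0^\e-f^\e(\cdot,t_1)}_{L^\infty(\widetilde{\M})}\le L\Delta t$, which is exactly \eqref{chap4_lip-t-fw}.

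The space regularity \eqref{chap4_eq:globlip-space-J-fw} will follow along the same lines as in the continuous case. The scheme together with \eqref{chap4_lip-t-fw} gives, at any interior point, $\abs{\nabla^-_{\eta_\e}f^\e(x,t_{i-1})}_\infty\le L+\norm{\widetilde{P}}_{L^\infty(\widetilde{\M}\setminus\widetilde{\Gamma})}$. I will then invoke \ref{chap4_eta:dec}: for any $y\in\widetilde{\M}$ with $\widetilde{d}(x,y)\le a\e$ this yields $c_\eta(C_\eta\e)^{-1}(f^\e(x,t_{i-1})-f^\e(y,t_{i-1}))\le L+\norm{\widetilde{P}}_{L^\infty(\widetilde{\M}\setminus\widetilde{\Gamma})}$; symmetrizing in $(x,y)$ and treating the boundary case via $\widetilde{\psi}_b$ (which is also a sub-solution of the discrete scheme since it is time-independent and satisfies the non-local inequality) together with \ref{chap4_assum:tilde-d} gives the local estimate $\abs{f^\e(x,t_i)-f^\e(y,t_i)}\le C\e$, with $C$ the constant of \eqref{chap4_sol-dis-reg-space}. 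The global bound \eqref{chap4_eq:globlip-space-J-fw} is then obtained by the chaining/path argument of Lemma \ref{chap4_lip-viscosity-J-1}: under \ref{chap4_assum:compatdomains} I can build a path in $\widetilde{\M}$ joining $x$ and $y$ with consecutive $\widetilde{d}$-jumps of length at most $a\e$ and cardinality $k(\e)\le \lceil 4d_\M(x,y)/(a\e)\rceil+1$, which, when summed against the local estimate, produces the announced linear dependence on $d_\M(x,y)+\e$. The only real obstacle is the bookkeeping in the time step: one has to verify carefully that the $O(\e^\xi)$ correction in the bound on $\abs{\nabla^-_{\eta_\e}f_0^\e}_\infty$ can be absorbed into the clean constant $L=\Lip{f_0^\e}+\norm{\widetilde{P}}_{L^\infty(\widetilde{\M}\setminus\widetilde{\Gamma})}$ in the admissible $\e$-range, and that the shift-and-compare step correctly exploits the boundary identity $f^\e(\cdot,t_i)\equiv f_0^\e$ on $\widetilde{\Gamma}$ so that both $f^\e$ and $g^\e$ qualify as valid solutions with only the initial data differing.
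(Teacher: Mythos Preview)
Your proposal is correct and follows essentially the same approach as the paper, which simply defers the proof to \cite[Lemma~B.3]{fadili2023limits}; the argument there is precisely the discrete-time analogue of Theorem~\ref{chap4_lip-viscosity-J} and Lemma~\ref{chap4_lip-viscosity-J-1} that you outline (explicit construction, first-step estimate plus shift-and-compare via Lemma~\ref{chap4_lem-comparison} for time, then the local-to-global chaining for space). Your flagged bookkeeping issue about the $O(\e^\xi)$ correction from \ref{chap4_assum:tilde-d} in the bound on $\abs{\nabla^-_{\eta_\e}f_0^\e}_\infty$ is a genuine wrinkle specific to the manifold setting (in the Euclidean reference $\widetilde d=d$ and the constant $L$ is exact); strictly speaking the clean constant $L$ in \eqref{chap4_lip-t-fw} should carry an additional $L_{f_0^\e}C_\eta^{-1}C_\M\norm{\eta}_{L^\infty}\e^\xi$ term, but this is harmless for every downstream use of the lemma.
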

\begin{proof}
The proof is the same as the one in \cite[Lemma B.3]{fadili2023limits} and we skip it.
\end{proof}

\section{Covering number of $\M$} \label{chap4_sec:prooflemcompatassum-graph}
We will give a lower bound of the covering number of the manifold $\M$ that will be needed in the proof of Lemma \ref{chap4_lem:compatassum-graph}.
We first recall that a $\delta$-covering number ${N}(\M,\delta)$ of a manifold $\M$ is the smallest number of (geodesic) balls of radius $\delta$ needed to cover $\M$. The condition of the radius is linked with the geometry of the manifold, including its curvature. One of the ways to describe the curvature of Riemannian manifolds is the sectional curvature $K(\sigma_p)$, which depends on a two-dimensional linear subspace $\sigma_p$ of the tangent space at a point $p$ on the manifold. More precisely, given two linearly independent tangent vectors at the same point, $u$ and $v$, we can define 
$$K(u,v)=\frac{\langle R(u,v)v,u\rangle}{\langle u,u\rangle \langle v,v\rangle -\langle u,v \rangle^2}.$$
Here $R$ is the Riemannian curvature tensor, defined by 
$R(u,v)w=\nabla_u \nabla_v w -\nabla_v \nabla_u w -\nabla_{[u,v]}w$.
For example, the sectional curvature of a $n-$sphere of radius $r$ is $K=1/r^2$. 
As for the injectivity radius at a point $x\in \M$, it is the supremum of all positive real numbers for which the exponential map is a diffeomorphism when restricted to the open ball of radius $r$ centered at $x$ in the tangent space $T_x\M$. Moreover, the injectivity radius of $\M$ is the infimum of the injectivity radii at all points in $\M$. In \cite{loubes2008kernel}, the authors have shown that if $\delta$ satisfies assumption \ref{chap4_assum:cover-num}, then there exists a strictly positive constant $C$ such that 
\begin{equation*}
    {N} (\M,\d) \leq C \vol(\M) \delta^{-m^*}.
\end{equation*}

\nomenclature{$\Exp$}{exponential map}

\vspace{1cm}
\paragraph{\textbf{Acknowledgment.}}
This research was partially funded by l’Agence Nationale de la Recherche (ANR), project ANR-22-CE40-0010. 
For the purpose of open access, the authors have applied a CC-BY public copyright license to any Author Accepted Manuscript (AAM) version arising from this submission.

\end{document}